\documentclass[10pt]{amsart}
\textwidth=14.5cm \oddsidemargin=1cm
\evensidemargin=1cm
\usepackage{amsmath}
\usepackage{amsxtra}
\usepackage{amscd}
\usepackage{amsthm}
\usepackage{amsfonts}
\usepackage{amssymb}
\usepackage{eucal}
\usepackage[all]{xy}
\usepackage{graphicx}
\usepackage[usenames]{color}
\usepackage{tikz-cd}
\usepackage{mathtools}

\newtheorem{cor}[subsubsection]{Corollary}
\newtheorem{lem}[subsubsection]{Lemma}
\newtheorem{prop}[subsubsection]{Proposition}

\newtheorem{thm}[subsubsection]{Theorem}

\newtheorem{defn}[subsubsection]{Definition}

\theoremstyle{remark}


\theoremstyle{definition}

\theoremstyle{remark}

\numberwithin{equation}{section}

\newcommand{\nc}{\newcommand}
\nc{\renc}{\renewcommand}
\nc{\ssec}{\subsection}
\nc{\sssec}{\subsubsection}
\nc{\on}{\operatorname}

\newcommand{\iso}{\buildrel{\sim}\over{\longrightarrow}}

\nc{\ips}{{\iota_P^{(S)}}}
\nc{\ipms}{{\iota_{P^-}^{(S)}}}
\nc{\sfpps}{{\sfp_P^{(S)}}}
\nc{\sfppms}{{\sfp_{P^-}^{(S)}}}

\nc\ol{\overline}
\nc\wt{\widetilde}
\nc\tboxtimes{\wt{\boxtimes}}
\nc\tstar{\wt{\star}}
\nc{\alp}{\alpha}

\nc{\ZZ}{{\mathbb Z}}
\nc{\NN}{{\mathbb N}}
\nc{\OO}{{\mathbb O}}
\renc{\SS}{{\mathbb S}}
\nc{\DD}{{\mathbb D}}
\nc{\GG}{{\mathbb G}}

\nc{\Fq}{{\mathbb F}_q}
\nc{\Fqb}{\ol{{\mathbb F}_q}}
\nc{\Ql}{\ol{{\mathbb Q}_\ell}}
\nc{\id}{\text{id}}
\nc\X{\mathcal X}

\nc{\red}{\on{red}}
\nc{\Ho}{\on{Ho}}
\nc{\Hom}{\on{Hom}}
\nc{\Mor}{\on{Mor}}
\nc{\coef}{\on{coeff}}
\nc{\Lie}{\on{Lie}}
\nc{\Loc}{\on{Loc}}
\nc{\Pic}{\on{Pic}}
\nc{\Bun}{\on{Bun}}
\nc{\IC}{\on{IC}}
\nc{\Aut}{\on{Aut}}
\nc{\rk}{\on{rk}}
\nc{\Sh}{\on{Sh}}
\nc{\Perv}{\on{Perv}}
\nc{\pos}{{\on{pos}}}
\nc{\Conv}{\on{Conv}}
\nc{\Sph}{\on{Sph}}
\nc{\Sym}{\on{Sym}}
\nc{\BunBb}{\overline{\Bun}_B}
\nc{\BunNb}{\overline{\Bun}_N}
\nc{\BunTb}{\overline{\Bun}_T}
\nc{\BunBbm}{\overline{\Bun}_{B^-}}
\nc{\BunBbel}{\overline{\Bun}_{B,el}}
\nc{\BunBbmel}{\overline{\Bun}_{B^-,el}}
\nc{\Buno}{\overset{o}{\Bun}}
\nc{\BunPb}{{\overline{\Bun}_P}}
\nc{\BunBM}{\Bun_{B(M)}}
\nc{\BunBMb}{\overline{\Bun}_{B(M)}}
\nc{\BunPbw}{{\widetilde{\Bun}_P}}
\nc{\BunBP}{\widetilde{\Bun}_{B,P}}
\nc{\GUb}{\overline{G/U}}
\nc{\GUPb}{\overline{G/U(P)}}

\nc{\PP}{\underline{P}'}

\nc{\Hhom}{\underline{\on{Hom}}}
\nc\syminfty{\on{Sym}^{\infty}}
\nc\lal{\ol{\lambda}}
\nc\xl{\ol{x}}
\nc\thl{\ol{\theta}}
\nc\nul{\ol{\nu}}
\nc\mul{\ol{\mu}}
\nc{\Sum}{\Sigma}
\nc{\oX}{\overset{o}{X}{}}
\nc{\hl}{\overset{\leftarrow}h{}}
\nc{\hr}{\overset{\rightarrow}h{}}
\nc{\M}{{\mathcal M}}
\nc{\N}{{\mathcal N}}
\nc{\F}{{\mathcal F}}
\nc{\D}{{\mathcal D}}
\nc{\Q}{{\mathcal Q}}
\nc{\Y}{{\mathcal Y}}
\nc{\G}{{\mathcal G}}
\nc{\E}{{\mathcal E}}
\nc{\CalC}{{\mathcal C}}
\nc\Dh{\widehat{\D}}

\nc{\C}{{\mathcal C}}
\nc{\K}{{\mathcal K}}
\renewcommand{\H}{{\mathcal H}}

\nc{\T}{{\mathcal T}}
\nc{\V}{{\mathcal V}}
\renc{\P}{{\mathcal P}}
\nc{\A}{{\mathcal A}}
\nc{\B}{{\mathcal B}}
\nc{\U}{{\mathcal U}}
\renewcommand{\L}{{\mathcal L}}
\nc{\Gr}{{\on{Gr}}}

\nc{\frn}{{\check{\mathfrak u}(P)}}

\nc{\fC}{\mathfrak C}
\nc{\p}{\mathfrak p}
\nc{\q}{\mathfrak q}
\nc\f{{\mathfrak f}}

\nc{\qo}{{\mathfrak q}}
\nc{\po}{{\mathfrak p}}
\nc{\s}{{\mathfrak s}}
\nc\w{\text{w}}

\nc{\mathi}{\iota}
\nc\Spec{\on{Spec}}
\nc\Proj{\on{Proj}}
\nc\Mod{\on{Mod}}
\nc{\tw}{\widetilde{\mathfrak t}}
\nc{\pw}{\widetilde{\mathfrak p}}
\nc{\qw}{\widetilde{\mathfrak q}}
\nc{\jw}{\widetilde j}

\nc{\grb}{\overline{\Gr}}
\nc{\I}{\mathcal I}
\renewcommand{\i}{\mathfrak i}

\nc{\lambdach}{{\check\lambda}}
\nc{\Lambdach}{{\check\Lambda}{}}
\nc{\much}{{\check\mu}}
\nc{\omegach}{{\check\omega}}
\nc{\nuch}{{\check\nu}}
\nc{\etach}{{\check\eta}}
\nc{\alphach}{{\check\alpha}}
\nc{\oblvtach}{{\check\oblvta}}
\nc{\rhoch}{{\check\rho}}
\nc{\ch}{{\check h}}

\nc{\Hb}{\overline{\H}}


\emergencystretch=2cm

\nc{\BA}{{\mathbb{A}}}
\nc{\BC}{{\mathbb{C}}}
\nc{\BG}{{\mathbb{G}}}
\nc{\BM}{{\mathbb{M}}}
\nc{\BO}{{\mathbb{O}}}
\nc{\BD}{{\mathbb{D}}}
\nc{\BBD}{{\mathbf{D}}}
\nc{\BN}{{\mathbb{N}}}
\nc{\BP}{{\mathbb{P}}}
\nc{\BQ}{{\mathbb{Q}}}
\nc{\BR}{{\mathbb{R}}}
\nc{\BZ}{{\mathbb{Z}}}
\nc{\BS}{{\mathbb{S}}}
\nc{\Deep}{{\bf{deep}}}
\nc{\deep}{deep}

\nc{\CA}{{\mathcal{A}}}
\nc{\CB}{{\mathcal{B}}}

\nc{\CE}{{\mathcal{E}}}
\nc{\CF}{{\mathcal{F}}}
\nc{\CH}{{\mathcal{H}}}

\nc{\CL}{{\mathcal{L}}}
\nc{\CC}{{\mathcal{C}}}
\nc{\CG}{{\mathcal{G}}}
\nc{\CalD}{{\mathcal{D}}}
\nc{\CM}{{\mathcal{M}}}
\nc{\CN}{{\mathcal{N}}}
\nc{\CK}{{\mathcal{K}}}
\nc{\CO}{{\mathcal{O}}}
\nc{\CP}{{\mathcal{P}}}
\nc{\CQ}{{\mathcal{Q}}}
\nc{\CR}{{\mathcal{R}}}
\nc{\CS}{{\mathcal{S}}}
\nc{\CT}{{\mathcal{T}}}
\nc{\CU}{{\mathcal{U}}}
\nc{\CV}{{\mathcal{V}}}
\nc{\CW}{{\mathcal{W}}}
\nc{\CX}{{\mathcal{X}}}
\nc{\CY}{{\mathcal{Y}}}
\nc{\CZ}{{\mathcal{Z}}}
\nc{\CI}{{\mathcal{I}}}

\nc{\csM}{{\check{\mathcal A}}{}}
\nc{\oM}{{\overset{\circ}{\mathcal M}}{}}
\nc{\obM}{{\overset{\circ}{\mathbf M}}{}}
\nc{\oCA}{{\overset{\circ}{\mathcal A}}{}}
\nc{\obA}{{\overset{\circ}{\mathbf A}}{}}
\nc{\ooM}{{\overset{\circ}{M}}{}}
\nc{\osM}{{\overset{\circ}{\mathsf M}}{}}
\nc{\vM}{{\overset{\bullet}{\mathcal M}}{}}
\nc{\nM}{{\underset{\bullet}{\mathcal M}}{}}
\nc{\oD}{{\overset{\circ}{\mathcal D}}{}}
\nc{\obD}{{\overset{\circ}{\mathbf D}}{}}
\nc{\oA}{{\overset{\circ}{\mathbb A}}{}}
\nc{\op}{{\overset{\bullet}{\mathbf p}}{}}
\nc{\cp}{{\overset{\circ}{\mathbf p}}{}}
\nc{\oU}{{\overset{\bullet}{\mathcal U}}{}}
\nc{\oZ}{{\overset{\circ}{\mathcal Z}}{}}
\nc{\ofZ}{{\overset{\circ}{\mathfrak Z}}{}}
\nc{\oF}{{\overset{\circ}{\fF}}}

\nc{\fa}{{\mathfrak{a}}}
\nc{\fb}{{\mathfrak{b}}}
\nc{\fd}{{\mathfrak{d}}}
\nc{\ff}{{\mathfrak{f}}}
\nc{\fg}{{\mathfrak{g}}}
\nc{\fgl}{{\mathfrak{gl}}}
\nc{\fh}{{\mathfrak{h}}}
\nc{\fj}{{\mathfrak{j}}}
\nc{\fl}{{\mathfrak{l}}}
\nc{\fm}{{\mathfrak{m}}}
\nc{\fn}{{\mathfrak{n}}}
\nc{\fu}{{\mathfrak{u}}}
\nc{\fp}{{\mathfrak{p}}}
\nc{\fr}{{\mathfrak{r}}}
\nc{\fs}{{\mathfrak{s}}}
\nc{\ft}{{\mathfrak{t}}}
\nc{\fz}{{\mathfrak{z}}}
\nc{\fsl}{{\mathfrak{sl}}}
\nc{\hsl}{{\widehat{\mathfrak{sl}}}}
\nc{\hgl}{{\widehat{\mathfrak{gl}}}}
\nc{\hg}{{\widehat{\mathfrak{g}}}}
\nc{\chg}{{\widehat{\mathfrak{g}}}{}^\vee}
\nc{\hn}{{\widehat{\mathfrak{n}}}}
\nc{\chn}{{\widehat{\mathfrak{n}}}{}^\vee}

\nc{\fA}{{\mathfrak{A}}}
\nc{\fB}{{\mathfrak{B}}}
\nc{\fD}{{\mathfrak{D}}}
\nc{\fE}{{\mathfrak{E}}}
\nc{\fF}{{\mathfrak{F}}}
\nc{\fG}{{\mathfrak{G}}}
\nc{\fK}{{\mathfrak{K}}}
\nc{\fL}{{\mathfrak{L}}}
\nc{\fM}{{\mathfrak{M}}}
\nc{\fN}{{\mathfrak{N}}}
\nc{\fP}{{\mathfrak{P}}}
\nc{\fU}{{\mathfrak{U}}}
\nc{\fV}{{\mathfrak{V}}}
\nc{\fZ}{{\mathfrak{Z}}}

\nc{\bb}{{\mathbf{b}}}
\nc{\bc}{{\mathbf{c}}}
\nc{\bd}{{\mathbf{d}}}
\nc{\bbf}{{\mathbf{f}}}
\nc{\be}{{\mathbf{e}}}
\nc{\bi}{{\mathbf{i}}}
\nc{\bj}{{\mathbf{j}}}
\nc{\bn}{{\mathbf{n}}}
\nc{\bp}{{\mathbf{p}}}
\nc{\bq}{{\mathbf{q}}}
\nc{\bu}{{\mathbf{u}}}
\nc{\bv}{{\mathbf{v}}}
\nc{\bx}{{\mathbf{x}}}
\nc{\bs}{{\mathbf{s}}}
\nc{\by}{{\mathbf{y}}}
\nc{\bw}{{\mathbf{w}}}
\nc{\bA}{{\mathbf{A}}}
\nc{\bK}{{\mathbf{K}}}
\nc{\bB}{{\mathbf{B}}}
\nc{\bC}{{\mathbf{C}}}
\nc{\bG}{{\mathbf{G}}}
\nc{\bD}{{\mathbf{D}}}
\nc{\bH}{{\mathbf{H}}}
\nc{\bM}{{\mathbf{M}}}
\nc{\bN}{{\mathbf{N}}}
\nc{\bV}{{\mathbf{V}}}
\nc{\bW}{{\mathbf{W}}}
\nc{\bX}{{\mathbf{X}}}
\nc{\bZ}{{\mathbf{Z}}}
\nc{\bS}{{\mathbf{S}}}

\nc{\sA}{{\mathsf{A}}}
\nc{\sB}{{\mathsf{B}}}
\nc{\sC}{{\mathsf{C}}}
\nc{\sD}{{\mathsf{D}}}
\nc{\sF}{{\mathsf{F}}}
\nc{\sG}{{\mathsf{G}}}
\nc{\sK}{{\mathsf{K}}}
\nc{\sM}{{\mathsf{M}}}
\nc{\sO}{{\mathsf{O}}}
\nc{\sW}{{\mathsf{W}}}
\nc{\sQ}{{\mathsf{Q}}}
\nc{\sP}{{\mathsf{P}}}
\nc{\sZ}{{\mathsf{Z}}}
\nc{\sfp}{{\mathsf{p}}}
\nc{\bsfp}{{\mathsf{\bar p}_P}}
\nc{\sfq}{{\mathsf{q}}}
\nc{\sr}{{\mathsf{r}}}
\nc{\bk}{{\mathsf{k}}}
\nc{\sg}{{\mathsf{g}}}
\nc{\sff}{{\mathsf{f}}}
\nc{\sfb}{{\mathsf{b}}}
\nc{\sfc}{{\mathsf{c}}}
\nc{\sd}{{\mathsf{d}}}

\nc{\BK}{{\bar{K}}}

\nc{\tA}{{\widetilde{\mathbf{A}}}}
\nc{\tB}{{\widetilde{\mathcal{B}}}}
\nc{\tg}{{\widetilde{\mathfrak{g}}}}
\nc{\tG}{{\widetilde{G}}}
\nc{\TM}{{\widetilde{\mathbb{M}}}{}}
\nc{\tO}{{\widetilde{\mathsf{O}}}{}}
\nc{\tU}{{\widetilde{\mathfrak{U}}}{}}
\nc{\TZ}{{\tilde{Z}}}
\nc{\tx}{{\tilde{x}}}
\nc{\tbv}{{\tilde{\bv}}}
\nc{\tfP}{{\widetilde{\mathfrak{P}}}{}}
\nc{\tz}{{\tilde{\zeta}}}
\nc{\tmu}{{\tilde{\mu}}}

\nc{\urho}{\underline{\rho}}
\nc{\uB}{\underline{B}}
\nc{\uC}{{\underline{\mathbb{C}}}}
\nc{\ui}{\underline{i}}
\nc{\uj}{\underline{j}}
\nc{\ofP}{{\overline{\mathfrak{P}}}}
\nc{\oB}{{\overline{\mathcal{B}}}}
\nc{\og}{{\overline{\mathfrak{g}}}}
\nc{\oI}{{\overline{I}}}

\nc{\eps}{\varepsilon}
\nc{\hrho}{{\hat{\rho}}}

\nc{\one}{{\mathbf{1}}}
\nc{\two}{{\mathbf{t}}}

\nc{\Rep}{{\mathop{\operatorname{\rm Rep}}}}
\nc{\Tot}{{\mathop{\operatorname{\rm Tot}}}}
\nc{\Ker}{{\mathop{\operatorname{\rm Ker}}}}
\nc{\im}{{\mathop{\operatorname{\rm Im}}}}
\nc{\Hilb}{{\mathop{\operatorname{\rm Hilb}}}}
\nc{\End}{{\mathop{\operatorname{\rm End}}}}
\nc{\Ext}{{\mathop{\operatorname{\rm Ext}}}}
\nc{\CHom}{{\mathop{\operatorname{{\mathcal{H}}\it om}}}}
\nc{\GL}{{\mathop{\operatorname{\rm GL}}}}
\nc{\gr}{{\mathop{\operatorname{\rm gr}}}}
\nc{\HN}{{\mathop{\operatorname{\rm HN}}}}
\nc{\Id}{{\mathop{\operatorname{\rm Id}}}}
\nc{\de}{{\mathop{\operatorname{\rm def}}}}
\nc{\length}{{\mathop{\operatorname{\rm length}}}}
\nc{\supp}{{\mathop{\operatorname{\rm supp}}}}

\nc{\Cliff}{{\mathsf{Cliff}}}
\nc{\Fl}{\on{Fl}}
\nc{\Fib}{{\mathsf{Fib}}}
\nc{\Coh}{{\on{Coh}}}
\nc{\QCoh}{{\on{QCoh}}}
\nc{\IndCoh}{{\on{IndCoh}}}
\nc{\FCoh}{{\mathsf{FCoh}}}

\nc{\reg}{{\text{\rm reg}}}

\nc{\cplus}{{\mathbf{C}_+}}
\nc{\cminus}{{\mathbf{C}_-}}
\nc{\cthree}{{\mathbf{C}_*}}
\nc{\Qbar}{{\bar{Q}}}
\nc\Eis{\on{Eis}}
\nc\Eisb{\ol\Eis{}}
\nc\Eisr{\on{Eis}^{rat}{}}
\nc\wh{\widehat}
\nc{\Def}{\on{Def_{\check{\fb}}(E)}}
\nc{\barZ}{\overline{Z}{}}
\nc{\barbarZ}{\overline{\barZ}{}}
\nc{\barpi}{\overline\pi}
\nc{\barbarpi}{\overline\barpi}
\nc{\barpip}{\overline\pi{}^+}
\nc{\barpim}{\overline\pi{}^-}

\nc{\fq}{\mathfrak q}

\nc{\fqb}{\ol{\sfq}{}}
\nc{\fpb}{\ol{\sfp}{}}
\nc{\fpr}{{\sfp^{rat}}{}}
\nc{\fqr}{{\sfq^{rat}}{}}

\nc{\hattimes}{\wh\otimes}

\nc{\bh}{{\bar{h}}}
\nc{\bOmega}{{\overline{\Omega(\check \fn)}}}

\nc{\seq}[1]{\stackrel{#1}{\sim}}

%
%
%
%

\nc{\cT}{{\check{T}}}
\nc{\cG}{{\check{G}}}
\nc{\cM}{{\check{M}}}
\nc{\cB}{{\check{B}}}

\nc{\ct}{{\check{\mathfrak t}}}
\nc{\cg}{{\check{\fg}}}
\nc{\cb}{{\check{\fb}}}
\nc{\cn}{{\check{\fn}}}

\nc{\cLambda}{{\check\Lambda}}

\nc{\cla}{{\check\lambda}}
\nc{\cmu}{{\check\mu}}
\nc{\cnu}{{\check\nu}}
\nc{\ceta}{{\check\eta}}

\nc{\DefbE}{{\on{Def}_{\cB}(E_\cT)}}

\nc{\imathb}{{\ol{\imath}}}
\nc{\rlr}{\overset{\longrightarrow}{\underset{\longrightarrow}\longleftarrow}}

\nc{\oBun}{\overset{\circ}\Bun}
\nc{\LocSys}{\on{LocSys}}
\nc{\BunBbb}{\ol{\ol{Bun}}_B}
\nc{\BunBr}{\Bun_B^{rat}}
\nc{\BunBrsg}{\Bun_B^{rat,\on{s.g.}}}
\nc{\BunBrp}{\Bun_B^{rat,polar}}
\nc{\BunBrpbg}{\Bun_B^{rat,polar,\on{b.g.}}}
\nc{\BunBrpsg}{\Bun_B^{rat,polar,\on{s.g.}}}
\nc{\BunTrp}{\Bun_T^{rat,polar}}
\nc{\BunTrpbg}{\Bun_T^{rat,polar,\on{b.g.}}}
\nc{\BunTrpsg}{\Bun_T^{rat,polar,\on{s.g.}}}
\nc{\BunNr}{\Bun_N^{rat}}
\nc{\BunNre}{\Bun_N^{enh,rat}}
\nc{\BunTr}{\Bun_T^{rat}}
\nc{\Vect}{\on{Vect}}
\nc{\Whit}{\on{Whit}}
\nc{\CTb}{\ol{\on{CT}}}
\nc{\Ran}{\on{Ran}}
\nc{\CTr}{\on{CT}^{rat}{}}
\nc\jmathr{\jmath^{rat}{}}
\nc{\ux}{\underline{x}}
\nc{\clambda}{{\check\lambda}}
\nc{\calpha}{{\check\alpha}}
\nc{\ind}{{\mathbf{ind}}}
\nc{\oblv}{{\mathbf{oblv}}}
\nc{\ox}{{\overline{x}}}
\nc{\cLa}{\check{\Lambda}}
\nc{\StinftyCat}{\on{DGCat}}
\nc{\inftyCat}{\infty\on{-Cat}}
\nc{\inftygroup}{\infty\on{-Grpd}}
\nc{\Dmod}{\on{D-mod}}
\nc{\CMaps}{{\mathcal Maps}}
\nc{\Maps}{\on{Maps}}
\nc{\affSch}{\on{Sch}^{\on{aff}}}
\nc{\dr}{{\on{dR}}}
\nc{\rD}{{\blacktriangle}}
\nc{\oCY}{\overset{\circ}\CY}
\nc{\leqG}{\underset{G}\leq}
\nc{\leqM}{\underset{M}\leq}
\nc{\leqGad}{\underset{G_{ad}}\leq}
\nc{\leqMad}{\underset{M_{ad}}\leq}
\nc{\psId}{\on{Ps-Id}}

\nc{\sotimes}{\overset{!}\otimes}

\begin{document}

\title[$\Dmod(\Bun_G^{\on{I}})$ is Compactly Generated]{$\Dmod(\Bun_G^{\on{I}})$ is Compactly Generated}

\author{Taeuk Nam}

\date{\today}

\begin{abstract}
Drinfeld and Gaitsgory proved that $\Dmod(\Bun_G)$ is compactly generated in \cite{DG2}. Let $\Bun_G^{\on{I}}$ be the algebraic stack of principal $G$-bundles on $X$ together with Iwahori level structure at a fixed point $x \in X$. More generally, for a finite collection of points $x_1, ..., x_k \in X$, let $\Bun_G^{(\on{I}; x_1, ..., x_k)}$ be the algebraic stack of principal $G$-bundles on $X$ together with Iwahori level structure at each point $x_j$. We will show that $\Dmod(\Bun_G^{\on{I}})$ and $\Dmod(\Bun_G^{(\on{I}; x_1, ..., x_k)})$ are compactly generated.
\end{abstract}

\maketitle

\tableofcontents

\section*{Introduction}

\ssec{Motivation}

\sssec{}
In \cite{DG2}, Drinfeld and Gaitsgory showed that the DG category $\Dmod(\Bun_G)$ enjoys a certain smallness property called \emph{compact generation}. Roughly speaking, the \emph{compact objects} of a category $\bC$ are those objects $c \in \bC$ such that mapping out of $c$ is well-behaved (i.e. commutes with colimits), and the category $\bC$ is said to be \emph{compactly generated} if every object of $\bC$ can be written as a colimit of compact objects.

\sssec{}
Being compactly generated is a highly desirable finiteness condition on a DG category for the following two reasons, one internal and one external.

\sssec{}
The first reason is that if $\bC$ is compactly generated, then it suffices to understand maps between compact objects to understand maps between arbitrary objects. More concretely, if $c, d \in \bC$, we can write
$$
c = \underset{i \in I}{\on{colim}} \ c_i, d = \underset{j \in J}{\on{colim}} \ d_j
$$
where the $c_i$ and $d_j$ are compact objects. Then we have
\begin{align}
\CMaps_{\bC}(c, d) &= \CMaps_{\bC}(\underset{i \in I}{\on{colim}} \ c_i, \underset{j \in J}{\on{colim}} \ d_j) \\
&= \underset{i \in I}{\on{lim}} \ \CMaps_{\bC}(c_i, \underset{j \in J}{\on{colim}} \ d_j) \\
&= \underset{i \in I}{\on{lim}} \ \underset{j \in J}{\on{colim}} \ \CMaps_{\bC}(c_i, d_j).
\end{align}

\sssec{}
The second reason is that if $\bC$ is compactly generated, then (continuous) functors out of $\bC$ are determined by where they send the compact objects. In other words, if $\bD$ is another (not necessarily compactly generated) cocomplete DG category, then we have an identification
$$
\on{Fun}_{cts}(\bC, \bD) = \on{Fun}_{exact}(\bC^c, \bD)
$$
where $\bC^c$ is the full subcategory of compact objects.

\sssec{}
The following is one source of motivation for wanting to study $\Dmod(\Bun_G)$ in particular.

\sssec{}
The (de Rham, global, critical level) unramified geometric Langlands conjecture states that there is an equivalence of DG categories
$$
\Dmod(\Bun_G) \xrightarrow{\sim} \IndCoh_{\on{Nilp}}(\LocSys_{G^{\vee}})
$$
that satisfies a number of conditions.

\sssec{}
The right hand side of the equivalence, $\IndCoh_{\on{Nilp}}(\LocSys_{G^{\vee}})$, is equivalent to the ind-completion $\on{Ind}(\Coh_{\on{Nilp}}(\LocSys_{G^{\vee}}))$, so it is \emph{a priori} compactly generated. Thus, the compact generation of $\Dmod(\Bun_G)$ is a necessary condition for the geometric Langlands conjecture to hold.

\sssec{}
The conjecture was recently proven in a series of five papers \cite{GLC1}, \cite{GLC2}, \cite{GLC3}, \cite{GLC4}, \cite{GLC5} by nine authors: Arinkin, Beraldo, Campbell, Chen, Faergeman, Gaitsgory, Lin, Raskin, and Rozenblyum.

\sssec{}
The first step of the proof, contained in \cite{GLC1}, is the construction of the Langlands functor
$$
\mathbb{L}_G : \Dmod(\Bun_G) \to \IndCoh_{\on{Nilp}}(\LocSys_{G^{\vee}}).
$$
The point at which the compact generation of $\Dmod(\Bun_G)$ is used is as follows: there is a monoidal action
$$
\QCoh(\LocSys_{G^{\vee}}) \otimes \Dmod(\Bun_G) \to \Dmod(\Bun_G)
$$
called the \emph{spectral action}. By fixing a certain object
$$
\on{Poinc}_{G, !}^{\on{Vac, glob}} \in \Dmod(\Bun_G)
$$
called the \emph{vacuum Poincare sheaf}, we obtain a functor
$$
\QCoh(\LocSys_{G^{\vee}}) \to \Dmod(\Bun_G).
$$
This functor has a continuous right adjoint, which we denote
$$
\mathbb{L}_{G, \on{coarse}} : \Dmod(\Bun_G) \to \QCoh(\LocSys_{G^{\vee}}).
$$
Then it is shown in \cite[Theorem 1.6.2]{GLC1} that $\mathbb{L}_{G, \on{coarse}}$ sends compact objects in $\Dmod(\Bun_G)$ to eventually coconnective objects in $\QCoh(\LocSys_{G^{\vee}})$. In other words, we have an exact functor
$$
\Dmod(\Bun_G)^c \to \QCoh(\LocSys_{G^{\vee}})^{>-\infty}
$$
and since the functor
$$
\Psi_{\on{Nilp}, \{0\}} : \IndCoh_{\on{Nilp}}(\LocSys_{G^{\vee}}) \to \QCoh(\LocSys_{G^{\vee}})
$$
is an equivalence on the eventually coconnective full subcategories, this yields
$$
\Dmod(\Bun_G)^c \to \IndCoh_{\on{Nilp}}(\LocSys_{G^{\vee}})^{>-\infty} \subset \IndCoh_{\on{Nilp}}(\LocSys_{G^{\vee}})
$$
and finally, because $\Dmod(\Bun_G)$ is compactly generated, we obtain a continuous functor
$$
\mathbb{L}_G : \Dmod(\Bun_G) \to \IndCoh_{\on{Nilp}}(\LocSys_{G^{\vee}}).
$$

\sssec{}
A natural direction of further inquiry and generalization is to go from the unramified setting to the ramified setting. This means that instead of considering the stack $\Bun_G$, we instead consider the moduli stack of principal $G$-bundles on $X$ with \emph{level structure} at a point $x \in X$.

\sssec{}
Let $D_x$ be the formal disc around $x \in X$. Restricting a principal $G$-bundle $\F_G$ along $D_x \to X$ yields a principal $G$-bundle $\F_{G, D_x}$ on $D_x$. Then we can interpret $G(\CO)$ as the group of sections of the constant group scheme $D_x \times G$ over $D_x$ and consider $\F_{G, D_x}$ as a $G(\CO)$-torsor. For a subgroup $H \subset G(\CO)$, $H$-level structure (at $x$) on $\F_G$ is the data of a reduction of $\F_{G, D_x}$ to $H$. The moduli stack of principal $G$-bundles on $X$ with $H$-level structure at $x \in X$ is denoted by $\Bun_G^{H}$.

\sssec{}
A side remark is that we could of course consider $H$-level structure at any arbitrary finite collection of points $x_1, ..., x_k \in X$ (or even level structure for a different subgroup at each point). However, for the sake of ease of exposition, here and in the sequel, we will discuss the single point case and only bring up the multiple point case as necessary.

\sssec{}
Let us take a moment to note that in the case of $H = G(\CO)$, $H$-level structure is no additional structure at all, i.e. $\Bun_G^{G(\CO)} = \Bun_G$. The \emph{smaller} the subgroup $H$, the \emph{wilder} the ramification; for example, the case of $H = \{e\}$ is called \emph{full level structure}, and the corresponding stack is denoted $\Bun_G^{\infty x}$.

\sssec{}
This leads to the natural question: for any subgroup $H \subset G(\CO)$, can we describe the category $\Dmod(\Bun_G^{H})$ in spectral terms? i.e. is there a ramified geometric Langlands equivalence?

\sssec{}
For an arbitrary subgroup $H \subset G(\CO)$, this question is out of reach with current technology. Thus as a first step, it makes sense to consider large subgroups of $G(\CO)$ (or in other words, \emph{tame} ramification). More precisely, there is a (surjective) group homomorphism
$$
G(\CO) \to G
$$
by evaluation at $t = 0$, and we call its kernel the \emph{first congruence subgroup} $G^{(1)}(\CO)$. We say a subgroup $H \subset G(\CO)$ is \emph{tame} if it contains $G^{(1)}(\CO)$.

\sssec{}
The geometric significance of tameness is that for a tame subgroup $H$, $H$-level structure is the same as the data of a $H/G^{(1)}(\CO)$-reduction at $x$. In plain language, tame level structure lives \emph{only} at the point itself, \emph{not} on the formal neighborhood.

\sssec{}
We will focus on the tame subgroup $\on{I}$, called the \emph{Iwahori} subgroup. This is the preimage of the borel $B \subset G$ under the homomorphism $G(\CO) \to G$. Thus Iwahori level structure is the data of a $B$-reduction at $x \in X$.

\sssec{}
As a step towards the Iwahori ramified geometric Langlands conjecture, in the body of this paper we will show that $\Dmod(\Bun_G^{\on{I}})$ is compactly generated.

\ssec{Main Result}

\sssec{}
In \cite{DG2}, Drinfeld and Gaitsgory proved that the DG category of D-modules on the algebraic stack $\Bun_G$ is compactly generated.

\sssec{}
This paper builds upon \cite{DG2} in the following way: instead of $\Bun_G$, we consider the algebraic stack $\Bun_G^{\on{I}}$ classifying principal $G$-bundles on $X$ together with Iwahori level structure at a fixed point $x \in X$ (i.e. a $B$-reduction at a fixed point $x \in X$). Our main result states that

\begin{thm} \label{main result 1}
    $\Dmod(\Bun_G^{\on{I}})$ is compactly generated.
\end{thm}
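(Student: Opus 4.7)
The strategy I would take is to reduce to the Drinfeld--Gaitsgory theorem via the forgetful map $\pi \colon \Bun_G^{\on{I}} \to \Bun_G$, which is representable, smooth, proper, and surjective, with fibers isomorphic to the flag variety $G/B$.

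First, I would pull back the exhaustion of $\Bun_G$ used in \cite{DG2}. There, $\Bun_G$ is presented as a filtered union of quasi-compact open substacks $U_n$ indexed by Harder--Narasimhan truncation level, each of which is QCA (so that $\Dmod(U_n)$ is compactly generated), and with the inclusions $U_n \hookrightarrow \Bun_G$ satisfying a cohomological compatibility sufficient to assemble compact generation from the pieces. Setting $V_n := \pi^{-1}(U_n)$ produces a filtered exhaustion of $\Bun_G^{\on{I}}$ by quasi-compact open substacks; quasi-compactness follows from properness of $\pi$. Moreover, since $\pi$ is representable and the automorphism group of a point of $V_n$ is a closed (hence affine) subgroup of that of its image in $U_n$, each $V_n$ is QCA, so $\Dmod(V_n)$ is compactly generated.

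The crux is to verify that the pulled-back inclusions $V_n \hookrightarrow \Bun_G^{\on{I}}$ retain the DG2 compatibility condition. The Harder--Narasimhan stratification lifts cleanly along $\pi$: the HN-type of a $G$-bundle is insensitive to the Iwahori datum at $x$, so each lifted stratum in $\Bun_G^{\on{I}}$ is a $G/B$-fibration over the corresponding HN stratum in $\Bun_G$. By smooth proper base change along $\pi$, the cohomological vanishing and finiteness estimates established in \cite{DG2} for the ``deep'' HN strata of $\Bun_G$ transfer to their counterparts in $\Bun_G^{\on{I}}$, up to a uniform shift by $\dim G/B$. With these compatibilities in place, the DG2 assembly argument applies to give compact generation of $\Dmod(\Bun_G^{\on{I}})$.

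The multi-point case $\Dmod(\Bun_G^{(\on{I}; x_1, \ldots, x_k)})$ is handled identically using the smooth proper $(G/B)^k$-bundle $\Bun_G^{(\on{I}; x_1, \ldots, x_k)} \to \Bun_G$, or alternatively by iterating the single-point argument. The main obstacle I anticipate is the verification of the DG2 compatibility condition above: although pullback along $\pi$ is geometrically very benign, this condition is a delicate statement about dimensions and cohomological amplitudes of HN strata, and each DG2 estimate must be carefully re-examined in the presence of the $G/B$ fibers.
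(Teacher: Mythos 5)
Your top-level skeleton matches the paper's: exhaust $\Bun_G^{\on{I}}$ by the quasicompact open substacks $\pi^{-1}(\Bun_G^{(\leq\theta)})$, each QCA, and reduce everything to showing that these are co-truncative. But the step you yourself flag as ``the crux'' --- that the relevant compatibility transfers from $U_n\subset\Bun_G$ to $\pi^{-1}(U_n)\subset\Bun_G^{\on{I}}$ ``by smooth proper base change, up to a uniform shift by $\dim G/B$'' --- is exactly where all the work lives, and the transfer is not automatic. Co-truncativeness of $j:U\hookrightarrow\CY$ is the existence of a left adjoint $j_!$ to $j^!$; it is not a cohomological estimate that base-changes along $\pi$. (The cheap reduction via Proposition \ref{cpct gen functor} also fails: $\pi^!$ does have a continuous right adjoint $\pi_*$, but $\pi_*$ is not conservative for a $G/B$-fibration.) In \cite{DG2}, co-truncativeness is proved by exhibiting the closed complement as \emph{contractive}, i.e.\ by descending an explicit $\BA^1$-contraction of $\Bun_{P^-}^{(S)}$ onto $\Bun_M^{(S)}$ along the smooth cover $\Bun_{P^-}^{(S)}\to\Bun_G$. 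That mechanism goes in the direction of smooth \emph{descent}, not smooth \emph{pullback}, so it does not simply lift along $\pi$: the preimage in $\Bun_{P^-}^{\on{I},(S)}$ of the image of $\Bun_M^{(S)}$ is a full $G/B$-fibration, and the $\BA^1$-action does not contract it onto anything of the form $\Bun_M^{\on{I}_M,(S)}$.

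Two genuinely new ingredients are therefore needed, both absent from your proposal. First, one must stratify $\Bun_G^{\on{I},(S)}$ further by the relative position $w\in W_M\backslash W$ of the Iwahori flag at $x$ with respect to the $P$-reduction, and build a separate contractive diagram for each piece $\prescript{w}{}{\Bun_G^{\on{I},(S)}}$, with $\Bun_M^{\on{I}_M,(S)}$ and $\prescript{w}{}{\Bun_{P^{\pm}}^{\on{I},(S)}}$ playing the roles of $\Bun_M^{(S)}$ and $\Bun_{P^{\pm}}^{(S)}$. Second, the surjectivity of $\Bun_M^{\on{I}_M,(S)}\to\prescript{w}{}{\Bun_P^{\on{I},(S)}}$ and the smoothness of $\prescript{w}{}{\Bun_{P^-}^{\on{I},(S)}}\to\Bun_G^{\on{I}}$ require strictly more than the $\on{H}^1$-vanishing used in \cite{DG2}: one needs \emph{global generation} of $\fn(P)_{\F_M}$, i.e.\ surjectivity of $\on{H}^0(X,\fn(P)_{\F_M})\to\fn(P)_{x^*\F_M}$, in order to lift automorphisms of the fiber at $x$ to global automorphisms matching up the Iwahori data. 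This is why the numerical threshold moves from $2g-2$ to $2g-1$ (a twist by $\CO(-x)$, becoming $2g-2+k$ for $k$ points), not by ``a uniform shift by $\dim G/B$''. Without these two steps your argument does not close.
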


\sssec{}
More generally, for a finite collection of points $x_1, ..., x_k \in X$, we can consider the algebraic stack $\Bun_G^{(\on{I}; x_1, ..., x_k)}$ classifying principal $G$-bundles on $X$ together with Iwahori level structure at each point $x_j$. Our argument will go through essentially verbatim in this generality, showing that

\begin{thm} \label{main result 2}
    $\Dmod(\Bun_G^{(\on{I}; x_1, ..., x_k)})$ is compactly generated.
\end{thm}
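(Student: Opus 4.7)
The plan is to transport the compact generation of $\Dmod(\Bun_G)$ from \cite{DG2} along the forgetful morphism
$$
\pi : \Bun_G^{(\on{I}; x_1, \ldots, x_k)} \to \Bun_G
$$
that drops the $B$-reductions at all the marked points. Fiberwise over $\Bun_G$, this morphism parametrizes choices of $B$-reductions of a $G$-torsor at each $x_j$, so it is representable with fibers (étale-locally) isomorphic to $(G/B)^k$. In particular $\pi$ is \emph{smooth and proper}, and one expects the source to inherit reasonable finiteness properties from the target.

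Concretely, I would argue within the \emph{truncatability} framework of \cite{DG2}. Recall that a quasi-compact open embedding $U \hookrightarrow \CY$ into an algebraic stack is called co-truncative when the corresponding pushforward preserves compactness of D-modules (equivalently, when the complementary inclusion satisfies an appropriate contractibility condition), and $\CY$ is truncatable when it admits an exhaustion by such opens. The key theorem of \cite{DG2} is that truncatable stacks have compactly generated D-module categories, and $\Bun_G$ is shown to be truncatable via its Harder--Narasimhan stratification. My strategy is to establish the general fact that if $\CY$ is truncatable and $\pi : \CY' \to \CY$ is smooth proper representable, then $\CY'$ is truncatable as well, with an exhaustion given by the preimages of any co-truncative exhaustion of $\CY$. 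Quasi-compactness of $\pi^{-1}(U_\alpha)$ is automatic from properness, so the real content is that co-truncativity pulls back along $\pi$.

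The main obstacle is this base-change step. In the Cartesian square
$$
\begin{tikzcd}
\pi^{-1}(U_\alpha) \arrow[r, hook, "j'_\alpha"] \arrow[d] & \CY' \arrow[d, "\pi"] \\
U_\alpha \arrow[r, hook, "j_\alpha"] & \CY
\end{tikzcd}
$$
one must verify that the preservation-of-compacts property for $j_{\alpha,*}$ transfers to $j'_{\alpha,*}$. The tools at hand are proper base change, which relates $\pi^* j_{\alpha,*}$ to $j'_{\alpha,*}$ applied to a pullback, together with the fact that for smooth proper $\pi$ the functors $\pi^*$ and $\pi^!$ agree up to a cohomological shift and a line-bundle twist, so both preserve and reflect compactness. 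Assembling these ingredients with the known description of compact objects on quasi-compact open substacks should force $j'_{\alpha,*}$ to preserve compacts, at which point the \cite{DG2} criterion delivers compact generation of $\Dmod(\Bun_G^{(\on{I}; x_1, \ldots, x_k)})$. If this abstract base-change lemma proves too fragile to formulate cleanly in the D-module setting, the backup plan is to repeat the \cite{DG2} Harder--Narasimhan analysis directly on $\Bun_G^{(\on{I}; x_1, \ldots, x_k)}$, exploiting that $B$-reductions at finitely many points interact transparently with reductions to standard parabolics on $X$, and that the induced stratification of $\Bun_G^{(\on{I}; x_1, \ldots, x_k)}$ still admits the contractive maps needed for the truncatability argument.
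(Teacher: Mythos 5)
Your primary route rests on an unproved base-change lemma --- that co-truncativity of a quasi-compact open $U_\alpha \subset \CY$ is inherited by $\pi^{-1}(U_\alpha) \subset \CY'$ for $\pi$ smooth, proper and representable --- and the argument you sketch for it does not close. Base change identifies $\pi^! j_{\alpha,*}$ with $j'_{\alpha,*}\,\pi_{U}^!$, so you only learn that $j'_{\alpha,*}$ preserves compactness of objects of the form $\pi_U^!(c)$ with $c$ compact downstairs. To conclude that $j'_{\alpha,*}$ preserves \emph{all} compact objects you would need every compact object of $\Dmod(\pi^{-1}(U_\alpha))$ to lie in the thick subcategory generated by $\pi_U^!$ of compacts, i.e.\ you would need $\pi_U^!$ of compact generators to compactly generate upstairs. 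This fails already for $\pi: \BP^1 \to \on{pt}$: the $!$-extension to $\BP^1$ of a rank-one local system on $\BG_m$ with nontrivial monodromy has vanishing de Rham cohomology, hence is right-orthogonal to $\pi^!(k)=\omega_{\BP^1}$; equivalently, $\pi_*$ is not conservative for proper $\pi$. No statement of the form ``co-truncativity pulls back along smooth proper representable morphisms'' is available in \cite{DG2}, and the fact that the numerical bound must be strengthened from $2g-2$ to $2g-2+k$ is a symptom that the preimages of the co-truncative opens of $\Bun_G$ are not simply inherited as co-truncative. Any repair along these lines would have to incorporate the compact generators coming from the fiber direction (e.g.\ via the $\Dmod(B\backslash G/B)^{\otimes k}$-action), which is a different argument from the one you propose.

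Your backup plan is what the paper actually does, but as stated it is a gesture rather than a proof, and the new content lives exactly where you defer it. One pulls back the Harder--Narasimhan stratification, stratifies each piece $\Bun_G^{(\on{I};x_1,\ldots,x_k),(S)}$ further by the $k$-tuple of relative positions $(w_1,\ldots,w_k)$ of the $B$-reductions at the $x_j$ against the $P$-reduction, and reruns the contractiveness diagram with $\Bun_M^{\on{I}_M,(S)}$, $\prescript{w}{}{\Bun_P^{\on{I},(S)}}$ and $\prescript{w}{}{\Bun_{P^-}^{\on{I},(S)}}$. The surjectivity and smoothness steps then require that $\fn(P)_{\F_M}$ (resp.\ $(\fg/\fp^-)_{\F_{P^-}}$) be globally generated at the $k$ marked points simultaneously, i.e.\ that
$$
\on{H}^0(X,\fn(P)_{\F_M}) \to \bigoplus_{j=1}^k (\fn(P)_{\F_M})_{x_j}
$$
be surjective, which is precisely where the bound $\langle\lambda,\check\alpha_i\rangle > 2g-2+k$ enters; and one must check that the $\BA^1$-contraction of $\Bun_{P^-}$ onto $\Bun_M$ is compatible with the level structure in each relative position $w_j$. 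None of this is supplied by your proposal.
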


\ssec{The Structure of the Paper}

\sssec{}
Here is a description of how the paper is organized.

\sssec{}
In Section \ref{BunG}, we begin by introducing the definition of compactly generated DG category in Subsection \ref{DGCat}. Then we summarize Drinfeld and Gaitsgory's proof in \cite{DG2} that $\Dmod(\Bun_G)$ is compactly generated. Here is a very brief overview of the argument:

\sssec{}
First, a prior result in \cite{DG1}, by the same authors, says that QCA algebraic stacks, i.e. those that are quasicompact ("QC") and have affine automorphism groups ("A"), have compactly generated DG categories of D-modules. We give a summary of the discussion in Subsection \ref{qca stacks}.

\sssec{}
The complication is that $\Bun_G$ is not QCA because it fails to be quasicompact. However, $\Bun_G$ does have affine automorphism groups, so it is \emph{locally} QCA. This means that $\Bun_G$ is a union of quasicompact QCA open substacks, so $\Dmod(\Bun_G)$ is a limit of compactly generated DG categories, under $!$-pullback.

\sssec{}
If these $!$-pullbacks had well defined left adjoints, this limit would be equivalent to taking the colimit where the connecting morphisms are the left adjoints of the $!$-pullbacks, the $!$-pushforward functors. Then since these $!$-pushforward functors would have continuous right adjoints, the colimit would automatically be compactly generated for formal reasons. Unfortunately, $!$-pushforward functors are not well defined along all maps.

\sssec{}
A property of open substacks called \emph{co-truncativeness} on $U$ guarantees all $!$-pushforward functors along inclusions $U \to U'$ are well-defined. This is discussed in Subsection \ref{t and t}.

\sssec{}
We then review in Subsection \ref{contractive substacks} a geometric condition on the closed complement of $U$ called \emph{contractiveness} that is sufficient for $U$ to be co-truncative.

\sssec{}
The explicit stratification of $\Bun_G$ into quasicompact open substacks given in \cite{DG2}, roughly speaking, is called the Harder-Narasimhan stratification and has to do with the degrees of the reductions of the principal $G$-bundles to parabolic subgroups. We describe this stratification in Subsection \ref{stratification}.

\sssec{}
Finally, we sketch the proof that the closed complements of the quasicompact open substacks above are indeed contractive (and therefore that the quasicompact open substacks are indeed co-truncative) in Subsection \ref{truncatability}.

\sssec{}
In Section \ref{SL2}, we go through the proof of compact generation, in both the unramified setting (in Subsection \ref{SL2 unramified}) and the Iwahori ramified setting (in Subsection \ref{SL2 Iwahori}), for $G = \on{SL}_2$. Most of the conceptual ideas of the proof appear already in this toy example, so a reader wanting a broad understanding of how the proof works without thinking too carefully about the combinatorics on first pass can skip Subsections \ref{stratification} and \ref{truncatability} to proceed directly to Section \ref{SL2}.

\sssec{}
In Section \ref{BunGI}, we prove the main result of the paper, Theorem \ref{main result 1}, that $\Dmod(\Bun_G^{\on{I}})$ is compactly generated.

\sssec{}
We spend Subsections \ref{reduction steps} and \ref{outline} giving an analogous stratification of $\Bun_G^{\on{I}}$. Very roughly speaking, we stratify first by the strata of the underlying principal $G$-bundle in the Harder-Narasimhan stratification, then stratify further by the relative position of the Iwahori level structure (i.e. the $B$-reduction at a point) and the $P$-reduction.

\sssec{}
We then spend Subsections \ref{global generation subsection} to \ref{contractive} showing that the closed complements of the quasicompact open substacks of our stratification are actually contractive.

\sssec{}
Finally, we discuss the proof of Theorem \ref{main result 2}, the compact generation of $\Dmod(\Bun_G^{(\on{I}, x_1, ..., x_k)})$, in Subsection \ref{multiple points}, explaining that our entire argument goes through modulo only a single change relating to a certain numerical bound in the definition of the stratification.

\ssec{Acknowledgements}
The author would like to express their deepest gratitude towards their advisor Dennis Gaitsgory, for his invaluable mentorship and support throughout the course of this project.

The author is also grateful towards Wyatt Reeves for suggesting the problem that would become our main result.

In addition, the author extends sincere thanks to Kevin Lin, for his careful reading of a previous draft of this paper and his thoughtful comments which greatly improved the clarity and quality of this manuscript.

Finally, the author is indebted to all three of the above individuals, as well as Elden Elmanto, Sanath Devalapurkar, Ekaterina Bogdanova, Grant Barkley, Johnny Gao, and countless other people for the numerous helpful conversations and patient explanations without which this paper would not exist.

Part of this project was done during the author's time at the Max Planck Institute for Mathematics in Bonn.

\section{Compact Generation of $\Dmod(\Bun_G)$} \label{BunG}

In this section, we give a brief summary of what has already been proven by Drinfeld and Gaitsgory in \cite{DG2}, i.e. the compact generation of $\Dmod(\Bun_G)$. We emphasize that none of the results in this section are due to us.

\ssec{Compact Generation of DG Categories} \label{DGCat}

\sssec{}
Our setting is $\StinftyCat_{\on{cont}}$; in other words our DG categories are assumed to be cocomplete and the functors between them are assumed to be continuous unless otherwise noted.

\sssec{}
An object $\bc \in \bC$ is said to be \emph{compact} if 
$$
 \CMaps_{\bC}(\bc, -) : \bC \to \Vect
$$
is a continuous functor.

\begin{defn} \label{def of cpct gen} \cite[1.2.2]{DG2}
    Let $\bC$ be a DG category. We say that $\bC$ is \emph{compactly generated} if there exists a collection of compact objects $\{\bc_\alpha \in \bC \}_{\alpha \in I}$ such that 
    $$
    \CMaps_{\bC}(\bc_\alpha, \bc') = 0 \ \forall \alpha
    $$
    implies $\bc' = 0$.
\end{defn}

\sssec{}
The next two propositions concern stability properties of being compactly generated.

\begin{prop} \label{cpct gen functor}
    Let $\bC$ and $\bD$ be a pair of DG categories, and let
    $$\sF : \bC \to \bD$$
    be a (continuous) functor between them. Then $\sF$ has a (not necessarily continuous) right adjoint 
    $$\sG : \bD \to \bC.$$
    Suppose $\sG$ is continuous and conservative. If $\bC$ is compactly generated, then so is $\bD$.
\end{prop}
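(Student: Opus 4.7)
The plan is the standard adjoint-passage argument. Let $\{\bc_\alpha\}_{\alpha \in I}$ be a collection of compact objects of $\bC$ witnessing the compact generation of $\bC$ as in Definition~\ref{def of cpct gen}. I propose to show that $\{\sF(\bc_\alpha)\}_{\alpha \in I}$ is a collection of compact generators of $\bD$.

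First I would verify that each $\sF(\bc_\alpha)$ is compact in $\bD$. This is a formal consequence of the hypothesis that $\sG$ is continuous: by the adjunction one has
$$
\CMaps_{\bD}(\sF(\bc_\alpha), -) \;\simeq\; \CMaps_{\bC}(\bc_\alpha, \sG(-)),
$$
and the right-hand side is a composition of continuous functors (since $\bc_\alpha$ is compact and $\sG$ is continuous), hence continuous. This is the general principle that a left adjoint preserves compactness whenever the right adjoint is continuous.

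Next I would check the generation condition. Let $\bd' \in \bD$ be such that $\CMaps_{\bD}(\sF(\bc_\alpha), \bd') = 0$ for all $\alpha$. Applying the adjunction isomorphism again, this says $\CMaps_{\bC}(\bc_\alpha, \sG(\bd')) = 0$ for all $\alpha$. Since $\{\bc_\alpha\}$ compactly generate $\bC$, this forces $\sG(\bd') = 0$. Now the conservativity of $\sG$ enters: the map $\bd' \to 0$ becomes an isomorphism after applying $\sG$, hence is itself an isomorphism, i.e. $\bd' = 0$. By Definition~\ref{def of cpct gen}, this proves $\bD$ is compactly generated.

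There is essentially no obstacle — the proof is a direct application of the adjunction and the definition of compact generation. The only subtlety worth being explicit about is how conservativity is used: one should phrase it in terms of detecting isomorphisms (equivalently, of detecting zero objects in a stable setting), rather than simply saying ``$\sG(\bd')=0$ implies $\bd'=0$'' without justification. Both formulations are equivalent here because we are working in $\StinftyCat_{\on{cont}}$, where the ambient categories are stable.
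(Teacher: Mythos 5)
Your proof is correct and is exactly the standard argument that the paper (which states this proposition without proof, deferring to \cite{DG2}) relies on: $\sF$ preserves compactness because its right adjoint $\sG$ is continuous, and $\{\sF(\bc_\alpha)\}$ generates because $\sG$ is conservative. Your remark on phrasing conservativity as detection of isomorphisms, with zero-detection following from stability and exactness of $\sG$, is the right level of care.
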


\sssec{}
Now consider a functor 
$$
 \Psi : I \to \StinftyCat_{\on{cont}}
$$
that sends
$$
 (i \in I) \mapsto \bC_i, \ (i \to j \in I) \mapsto (\psi_{i,j}: \bC_i \to \bC_j).
$$
For each $i \to j \in I$, 
$$
 \psi_{i,j}: \bC_i \to \bC_j
$$
has a right adjoint 
$$\phi_{j,i} : \bC_j \to \bC_i.$$
Suppose $\phi_{j,i}$ is continuous for all $i \to j$. Then we obtain a functor
$$
 \Phi : I^{\on{op}} \to \StinftyCat_{\on{cont}}
$$
and it is a fact (see \cite[Proposition 1.7.5]{DG2}) that
$$
 \underset{i \in I}{\underset{\longrightarrow}{colim}} \ (\bC_i, \psi_{i, j})
$$
is canonically equivalent to 
$$
 \underset{i \in I^{\on{op}}}{\underset{\longleftarrow}{lim}} \ (\bC_i, \phi_{j, i}).
$$

\begin{prop} \label{cpct gen colim} \cite[Corollary 1.9.4]{DG2}
    In the above situation, if $\bC_i$ is compactly generated for all $i \in I$, then $\underset{i \in I}{\underset{\longrightarrow}{colim}} \  (\bC_i, \psi_{i,j})$ is compactly generated.
\end{prop}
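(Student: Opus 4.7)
The plan is to use the identification of the colimit with the limit (under right adjoints) recalled just before the proposition, and exploit that this makes the insertion functors into the colimit admit continuous right adjoints.

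Concretely, write $\bC := \underset{i \in I}{\on{colim}} \, (\bC_i, \psi_{i,j})$, which by the cited equivalence is also $\underset{i \in I^{\on{op}}}{\on{lim}} \, (\bC_i, \phi_{j,i})$. For each $i \in I$, let $\ins_i : \bC_i \to \bC$ denote the tautological insertion functor (coming from the colimit description) and let $\ev_i : \bC \to \bC_i$ denote the tautological projection functor (coming from the limit description). A formal consequence of the equivalence recalled before the proposition statement is that $\ev_i$ is a continuous right adjoint to $\ins_i$, and moreover that the family of functors $\{\ev_i\}_{i \in I}$ is jointly conservative (projections out of a limit of DG categories detect the zero object).

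For each $i \in I$, fix a set of compact generators $\{\bc^i_\alpha\}_{\alpha \in A_i}$ of $\bC_i$, which exists by hypothesis. I claim the collection
$$
\{\ins_i(\bc^i_\alpha)\}_{i \in I, \, \alpha \in A_i}
$$
exhibits the compact generation of $\bC$. First, each $\ins_i(\bc^i_\alpha)$ is compact in $\bC$: the right adjoint $\ev_i$ to $\ins_i$ is continuous, so for any filtered system $\{\bd_k\}$ in $\bC$ one has
$$
\CMaps_\bC(\ins_i(\bc^i_\alpha), \on{colim}_k \, \bd_k) = \CMaps_{\bC_i}(\bc^i_\alpha, \ev_i(\on{colim}_k \, \bd_k)) = \on{colim}_k \, \CMaps_{\bC_i}(\bc^i_\alpha, \ev_i(\bd_k)),
$$
using continuity of $\ev_i$ and the compactness of $\bc^i_\alpha$ in $\bC_i$.

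To verify the generation condition of \propref{def of cpct gen} — actually Definition~\ref{def of cpct gen} — suppose $\bc' \in \bC$ satisfies $\CMaps_\bC(\ins_i(\bc^i_\alpha), \bc') = 0$ for every $i$ and every $\alpha$. By adjunction this rewrites as $\CMaps_{\bC_i}(\bc^i_\alpha, \ev_i(\bc')) = 0$ for every $\alpha \in A_i$, and since $\{\bc^i_\alpha\}$ compactly generate $\bC_i$ this forces $\ev_i(\bc') = 0$. As this holds for all $i \in I$ and the family $\{\ev_i\}$ is jointly conservative, we conclude $\bc' = 0$, completing the proof. The only non-formal input is the joint conservativity of the projections $\ev_i$, which is the main content of \cite[Proposition 1.7.5]{DG2}; once that is granted the argument is a direct unwinding of adjunctions.
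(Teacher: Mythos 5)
Your argument is correct and is precisely the standard proof of \cite[Corollary 1.9.4]{DG2}, which the paper simply cites rather than reproves: the insertion functors admit continuous right adjoints given by the evaluation functors under the colimit-limit identification, hence send compact objects to compact objects, and the evaluations are jointly conservative, so the images of compact generators generate. No gaps.
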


\ssec{QCA Stacks} \label{qca stacks}

\sssec{}
In this subsection, we will give an extremely brief summary of the parts of \cite{DG1} that pertains to the compact generation of $\Dmod$.

\sssec{}
First, let us recall the definition of $\IndCoh$. Given a quasicompact (locally noetherian and almost of finite type) DG scheme $Z$, we can consider
$$
  \Coh(Z) \subset \QCoh(Z)
$$
and define
$$
  \IndCoh(Z) := \on{Ind} (\Coh(Z)).
$$

\sssec{}
Given a map $f : Z_1 \to Z_2$, there is a (continuous) functor
$$
  f^! : \IndCoh(Z_2) \to \IndCoh(Z_1)
$$
defined in \cite{IndCoh}. This data upgrades to a functor
$$
  \IndCoh^! : (\on{DGSch}_{\on{aft}})^{\on{op}} \to \StinftyCat_{\on{cont}}
$$
which sends 
$$(Z \in \on{DGSch}_{\on{aft}}) \mapsto \IndCoh(Z), \ (f : Z_1 \to Z_2) \mapsto (f^! : \IndCoh(Z_2) \to \IndCoh(Z_1)).$$

\sssec{}
Let $\on{PreStk}$ denote the category of (locally almost of finite type) prestacks. We can left Kan extend $\IndCoh^!$ along the inclusion
$$
  \on{DGSch}_{\on{aft}}^{\on{op}} \to \on{PreStk}^{\on{op}} 
$$
to obtain a functor, also denoted
$$
  \IndCoh^! : \on{PreStk}^{\on{op}} \to \StinftyCat_{\on{cont}}
$$
by a slight abuse of notation. More concretely, we have
$$
  \IndCoh(\CY) = \underset{Z \to \CY}{\underset{\longleftarrow}{lim}} \IndCoh(Z)
$$
where the limit is taken over quasicompact DG schemes $Z$ mapping to $\CY$. In other words, the data of an ind-coherent sheaf $\CF$ on a prestack $\CY$ is equivalent to the data of a compatible family of ind-coherent sheaves $g^! \CF \in \IndCoh(Z)$ for each map $g : Z \to \CY$ from a quasicompact DG scheme.

\sssec{}
Now let $\CY$ be an algebraic stack. We can consider the full subcategory
$$
  \Coh(\CY) \subset \IndCoh(\CY)
$$
consisting of $\CF \in \IndCoh(\CY)$ such that $g^! \CF \in \Coh(Z)$ for all smooth maps $g: Z \to \CY$ from quasicompact DG schemes.

\sssec{}
The inclusion
$$
  \Coh(\CY) \subset \IndCoh(\CY)
$$
yields a functor
$$
  \on{Ind} (\Coh(\CY)) \to \IndCoh(\CY) 
$$
and if $\CY$ is a quasicompact $DG$ scheme, by definition of $\IndCoh$ this functor is an equivalence. However, in general it need not be.

\begin{defn} \cite[Definition 2.2.2]{DG2} 
    Let $\CY$ be an algebraic stack. $\CY$ is \emph{locally QCA} if the automorphism groups of geometric points of $\CY$ are affine. $\CY$ is \emph{QCA} if it is quasicompact and locally QCA.
\end{defn}

\begin{prop} \label{Ind of Coh is IndCoh} \cite[Theorem 3.3.5]{DG1}
    Let $\CY$ be a QCA algebraic stack. Then the functor
    $$
    \on{Ind} (\Coh(\CY)) \to \IndCoh(\CY)
    $$
    defined above is an equivalence. In particular, for a QCA algebraic stack $\CY$, $\IndCoh(\CY)$ is compactly generated by $\Coh(\CY)$.
\end{prop}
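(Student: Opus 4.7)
The plan is to combine the existing compact generation of $\IndCoh(Z)$ for a quasicompact DG scheme $Z$ (which holds by definition) with smooth descent for $\IndCoh^!$, using Proposition \ref{cpct gen colim} to propagate compact generation from the charts to $\CY$.

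First, since $\CY$ is quasicompact and an algebraic stack, I would choose a smooth surjection $f : Z \to \CY$ with $Z$ a quasicompact DG scheme. By smooth descent for the functor $\IndCoh^!$, we have
$$
\IndCoh(\CY) \;\simeq\; \underset{[n] \in \Delta}{\underset{\longleftarrow}{\on{lim}}}\; \IndCoh(Z^{n/\CY})
$$
where $Z^{\bullet/\CY}$ denotes the Čech nerve of $f$, and each transition functor is a $!$-pullback along a smooth map between quasicompact DG schemes. Each individual term $\IndCoh(Z^{n/\CY})$ is compactly generated, since $Z^{n/\CY}$ is a quasicompact DG scheme (this uses crucially that $\CY$ has affine automorphism groups, so that the iterated fiber products $Z \times_\CY \cdots \times_\CY Z$ remain schemes and not merely stacks).

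Next, I would apply the limit-to-colimit identification recalled in the excerpt (\cite[Proposition 1.7.5]{DG2}). For each face map $g: Z^{n+1/\CY} \to Z^{n/\CY}$, the $!$-pullback $g^!$ admits a left adjoint $g_*^{\IndCoh}$, and the key technical claim is that $g_*^{\IndCoh}$ is continuous. Granting this, the cosimplicial limit above becomes a simplicial colimit
$$
\IndCoh(\CY) \;\simeq\; \underset{[n] \in \Delta^{\on{op}}}{\underset{\longrightarrow}{\on{colim}}}\; \IndCoh(Z^{n/\CY})
$$
and Proposition \ref{cpct gen colim} applies to conclude that $\IndCoh(\CY)$ is compactly generated. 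To identify the compact generators with $\Coh(\CY)$, I would use that the colimit description produces compacts of the form $f_*^{\IndCoh}(\CF)$ for $\CF \in \Coh(Z)$; since $f$ is smooth and these are bounded and coherent after $!$-pullback to any smooth atlas, they lie in $\Coh(\CY)$, and conversely any object of $\Coh(\CY)$ arises this way. The induced comparison map $\on{Ind}(\Coh(\CY)) \to \IndCoh(\CY)$ is then an equivalence by a standard argument comparing compact objects on both sides.

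The main obstacle is the continuity of $g_*^{\IndCoh}$ for the Čech face maps. For a general map of stacks this fails, and its validity here is precisely where the QCA hypothesis does the essential work: because $\CY$ has affine automorphism groups, the maps in the Čech nerve $Z^{\bullet/\CY}$ are quasicompact schematic smooth maps of quasicompact DG schemes, and for such maps $g_*^{\IndCoh}$ is continuous (e.g. by combining base change along a smooth atlas with the analogous statement for affine morphisms). A secondary subtlety is verifying that the compact objects produced by the colimit presentation genuinely coincide with $\Coh(\CY)$ as defined via smooth $!$-restriction; this is a bookkeeping matter involving coherence preservation under the $*$-pushforward by a smooth quasicompact morphism, combined with generation on each chart.
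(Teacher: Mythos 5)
There is a genuine gap, and it sits exactly at your ``key technical claim.'' Note first that the paper does not prove this statement: it is quoted from \cite[Theorem 3.3.5]{DG1}, where it is one of the main results, proved by stratifying a QCA stack by global quotients $Z/\on{GL}_n$ (Kresch's theorem) and establishing continuity and bounded cohomological amplitude of the derived global sections functor --- not by formal \v{C}ech descent. Your proposed descent argument breaks because the adjunction runs the wrong way. For a smooth face map $g$ of the \v{C}ech nerve, one has $g^! \simeq g^{\IndCoh, *}(-) \otimes \omega_{rel}$ for a shifted relative dualizing line bundle, so $g^!$ is, up to an invertible twist, the \emph{left} adjoint of $g_*^{\IndCoh}$, not the other way around. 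To convert the limit $\underset{\Delta}{\on{lim}}\, \IndCoh(Z^{\bullet/\CY})$ into a colimit via \cite[Proposition 1.7.5]{DG2} you would need each transition functor $g^!$ to itself admit a left adjoint, i.e.\ to preserve limits; this fails already for $g : \BA^1 \to \on{pt}$, where $g^*$ does not commute with infinite products. The continuity of $g_*^{\IndCoh}$, which you identify as the main obstacle, is true but irrelevant: it gives $g^!$ a continuous \emph{right} adjoint, which is of no use for the limit-to-colimit exchange.

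A useful sanity check shows the approach cannot be patched: nowhere does your argument make essential use of the affineness of automorphism groups. (Your stated use of it --- to ensure the \v{C}ech nerve consists of schemes --- is a property of the diagonal being schematic, not of stabilizers being affine.) Taking $\CY = \on{pt}/E$ for $E$ an elliptic curve, the atlas $\on{pt} \to \on{pt}/E$ has \v{C}ech nerve the projective schemes $E^{\times n}$, each with compactly generated $\IndCoh$; if your argument were valid it would apply verbatim and prove compact generation of $\IndCoh(\on{pt}/E)$, which is precisely the counterexample motivating the QCA hypothesis in \cite{DG1}. Any correct proof must use affineness of stabilizers in a load-bearing way, as the stratification argument of \emph{loc.\ cit.} does.
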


\sssec{}
Now we will discuss the relationship between $\IndCoh$ and $\Dmod$. Recall that by definition, for a (locally almost of finite type) prestack $\CY$,
$$
\Dmod(\CY) := \IndCoh(\CY_{\on{dR}}).
$$
The map $\iota : \CY \to \CY_{dR}$ gives rise to a continuous and conservative functor
$$
 \iota^! : \Dmod(\CY) = \IndCoh(\CY_{\on{dR}}) \to \IndCoh(\CY)
$$
which we will denote by $\oblv_{\Dmod(\CY)}$.

\sssec{}
For a general prestack, $\oblv_{\Dmod(\CY)}$ need not admit a left adjoint. However, when $\CY$ is an algebraic stack, $\oblv_{\Dmod(\CY)}$ admits a left adjoint
$$\ind_{\Dmod(\CY)} : \IndCoh(\CY) \to \Dmod(\CY)$$ (see \cite[6.3]{DG1}). Proposition \ref{cpct gen functor} and Proposition \ref{Ind of Coh is IndCoh} together imply 

\begin{prop} \label{Dmod of QCA}
    Let $\CY$ be a QCA algebraic stack. Then $\Dmod(\CY)$ is compactly generated.
\end{prop}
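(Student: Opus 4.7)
The plan is essentially to combine the two propositions flagged in the sentence preceding the statement. By the discussion just before the proposition, for an algebraic stack $\CY$ the forgetful functor
$$
\oblv_{\Dmod(\CY)} = \iota^! : \Dmod(\CY) \to \IndCoh(\CY)
$$
is continuous and conservative, and admits a left adjoint
$$
\ind_{\Dmod(\CY)} : \IndCoh(\CY) \to \Dmod(\CY).
$$
So the setup of \propref{cpct gen functor} is exactly in place, with $\sF = \ind_{\Dmod(\CY)}$ and $\sG = \oblv_{\Dmod(\CY)}$.

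First I would invoke \propref{Ind of Coh is IndCoh}: since $\CY$ is assumed QCA, that proposition gives an equivalence $\on{Ind}(\Coh(\CY)) \iso \IndCoh(\CY)$, so $\IndCoh(\CY)$ is compactly generated (by $\Coh(\CY)$). Then I would apply \propref{cpct gen functor} to the adjoint pair $(\ind_{\Dmod(\CY)}, \oblv_{\Dmod(\CY)})$: the hypotheses of that proposition are verified (continuity of $\sF$ follows because it is a left adjoint; continuity and conservativity of $\sG$ are recorded in the paragraph above the statement), and compact generation of $\bC = \IndCoh(\CY)$ has just been established. The conclusion is that $\bD = \Dmod(\CY)$ is compactly generated.

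There is essentially no hard step here, as everything is packaged into the two prior propositions. The only thing one might want to record more explicitly is a natural set of compact generators: since $\ind_{\Dmod(\CY)}$ is a left adjoint to a continuous functor, it preserves compact objects, so the collection $\{\ind_{\Dmod(\CY)}(\CF) \mid \CF \in \Coh(\CY)\}$ furnishes compact generators of $\Dmod(\CY)$. This follows formally from conservativity of $\oblv_{\Dmod(\CY)}$ and compact generation of $\IndCoh(\CY)$ by $\Coh(\CY)$, and is in fact the content of the proof of \propref{cpct gen functor}.
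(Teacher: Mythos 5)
Your proposal is correct and is exactly the argument the paper intends: the paper derives this proposition by combining \propref{cpct gen functor} (applied to the adjoint pair $\ind_{\Dmod(\CY)} \dashv \oblv_{\Dmod(\CY)}$, with $\oblv_{\Dmod(\CY)}$ continuous and conservative) with the compact generation of $\IndCoh(\CY)$ from \propref{Ind of Coh is IndCoh}. The closing remark identifying $\{\ind_{\Dmod(\CY)}(\CF) \mid \CF \in \Coh(\CY)\}$ as compact generators is a correct and standard addendum.
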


\ssec{Truncativeness and Truncatability} \label{t and t}

\sssec{}
Recall that the goal of this section is to summarize the argument that $\Dmod(\Bun_G)$ is compactly generated. If $\Bun_G$ were QCA, we would be done by Proposition \ref{Dmod of QCA}. Unfortunately, $\Bun_G$ is not QCA because it is not quasicompact. However, it is \emph{locally} QCA, i.e. its field valued points have affine automorphism groups.

\sssec{} \label{need for cotruncativeness}
Let $\CY$ be a locally QCA algebraic stack. Denote by $\on{Open}(\CY)$ the poset of quasicompact open substacks $U \subset \CY$. We have
$$
  \Dmod(\CY) = \underset{U_i \in \on{Open}(\CY)^{\on{op}}}{\underset{\longleftarrow}{lim}} \Dmod(U_i)
$$
where for an inclusion $j_{i,j} : U_i \hookrightarrow U_j$ the connecting functor is 
$$
j_{i,j}^! : \Dmod(U_j) \to \Dmod(U_i).
$$
If the functors $j^!$ admitted well-defined left adjoints, we would be in the situation of Proposition \ref{cpct gen colim} and we could conclude that 
$$
\Dmod(\CY) = \underset{U_i \in \on{Open}(\CY)^{\on{op}}}{\underset{\longleftarrow}{lim}} \Dmod(U_i) = \underset{U_i \in \on{Open}(\CY)}{\underset{\longrightarrow}{colim}} \Dmod(U_i)
$$
is compactly generated. Unfortunately, $j_{i,j}^!$ does not always admit a left adjoint.

\begin{defn} \cite[Definition 3.1.5]{DG2}
    Let $\CY$ be a QCA algebraic stack. Let $i : \CZ \hookrightarrow \CY$ be a closed substack and let $j : U \hookrightarrow \CY$ be its complementary open. We say that $\CZ \hookrightarrow \CY$ is \emph{truncative} and that $U \hookrightarrow \CY$ is \emph{co-truncative} if $i_*$ admits a well-defined left adjoint
    $$
    i^* : \Dmod(\CY) \to \Dmod(\CZ)
    $$
    or equivalently, if $j^!$ admits a well-defined left adjoint
    $$
      j_! : \Dmod(U) \to \Dmod(\CY).
    $$
\end{defn}

\sssec{}
For a proof that the two conditions in the above definition, as well as a couple of other conditions, are equivalent, see \cite[Proposition 3.1.2]{DG2}.

\begin{defn} \cite[Definition 3.8.2]{DG2}
    Now let $\CY$ be a locally QCA algebraic stack. We say that a closed substack $i : \CZ \hookrightarrow \CY$ is \emph{truncative} if for every quasicompact open substack $\oCY \subset \CY$,
    $$
      \CZ \cap \oCY \hookrightarrow \oCY
    $$
    is truncative. Similarly, we say that an open substack $j : U \hookrightarrow \CY$ is \emph{co-truncative} if for every quasicompact open substack $\oCY \subset \CY$,
    $$
      U \cap \oCY \hookrightarrow \oCY
    $$
    is co-truncative.
\end{defn}

\sssec{}
Suppose that $\CZ \hookrightarrow \CY$ is now only locally closed. Then we can factor the inclusion as
$$
  \CZ \hookrightarrow \CV \hookrightarrow \CY
$$
where $\CZ \hookrightarrow \CV$ is a closed embedding and $\CV \hookrightarrow \CY$ is an open embedding. Then we say that $\CZ \hookrightarrow \CY$ is truncative if $\CZ \hookrightarrow \CV$ is.

\sssec{}
One can formally deduce that even in the case where $\CY$ is only locally QCA, $j : U \hookrightarrow \CY$ being cotruncative implies that $j^!$ has a well-defined left adjoint, which we still denote by $j_!$.

\begin{defn}\cite[Definition 4.1.1]{DG2}
    A locally QCA algebraic stack $\CY$ is \emph{truncatable} if it can be covered by co-truncative quasicompact open substacks. Alternatively, $\CY$ is truncatable if $\on{CTrnk}(\CY)$, the subposet of $\on{Open}(\CY)$ consisting of the co-truncative quasicompact open substacks, is cofinal in $\on{Open}(\CY)$.
\end{defn}

\sssec{}
The discussion in subsubsection \ref{need for cotruncativeness} implies that

\begin{prop} \label{truncatable}
    Let $\CY$ be a truncatable locally QCA algebraic stack. Then $\Dmod(\CY)$ is compactly generated.
\end{prop}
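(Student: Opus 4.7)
The plan is to upgrade the limit presentation of $\Dmod(\CY)$ sketched in subsubsection~\ref{need for cotruncativeness} to a colimit presentation using the co-truncativeness hypothesis, and then feed this into Proposition~\ref{cpct gen colim}. First, since $\CY$ is truncatable, $\on{CTrnk}(\CY)$ is cofinal in $\on{Open}(\CY)$, so the standard limit description of $\Dmod$ on a locally QCA stack restricts to
$$
\Dmod(\CY) \;\simeq\; \underset{U \in \on{CTrnk}(\CY)^{\on{op}}}{\underset{\longleftarrow}{lim}} \; \Dmod(U),
$$
with transition functors the $!$-restrictions $j_{U,V}^! : \Dmod(V) \to \Dmod(U)$ for inclusions $U \subseteq V$ of co-truncative quasicompact opens.

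Next, I would use the co-truncativeness of each $U \in \on{CTrnk}(\CY)$ to observe that, for any inclusion $U \subseteq V$ of objects of $\on{CTrnk}(\CY)$, the open embedding $U \hookrightarrow V$ is itself co-truncative: this is immediate by applying the locally-QCA definition to the quasicompact open $\oCY := V$. Hence $j_{U,V}^!$ has a well-defined left adjoint $j_{U,V,!}$, automatically continuous. The right adjoints $j_{U,V}^!$ are continuous by construction, so the hypotheses of the limit-to-colimit flipping statement \cite[Proposition 1.7.5]{DG2} are satisfied, yielding
$$
\Dmod(\CY) \;\simeq\; \underset{U \in \on{CTrnk}(\CY)}{\underset{\longrightarrow}{colim}} \; \bigl(\Dmod(U),\, j_{U,V,!}\bigr)
$$
as a colimit in $\StinftyCat_{\on{cont}}$.

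Finally, each $U \in \on{CTrnk}(\CY)$ is a quasicompact open substack of $\CY$, hence QCA (it inherits affine automorphism groups from $\CY$), so $\Dmod(U)$ is compactly generated by Proposition~\ref{Dmod of QCA}. Applying Proposition~\ref{cpct gen colim} to the above colimit diagram then gives that $\Dmod(\CY)$ is compactly generated, as required.

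The real content lies entirely in the definition of co-truncativeness: that is the only non-formal input, since once $j_!$ is known to exist and be continuous the rest is an abstract manipulation in $\StinftyCat_{\on{cont}}$. The only step that requires any care is verifying that the co-truncativeness of $U$ as a substack of $\CY$ transfers to the co-truncativeness of $U \hookrightarrow V$ for $V$ another co-truncative quasicompact open, but this is built into the locally QCA definition by taking $\oCY = V$. Thus no new geometric input is needed beyond what has already been established in the preceding subsections.
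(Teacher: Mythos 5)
Your proof is correct and follows essentially the same route as the paper: restrict the limit presentation of $\Dmod(\CY)$ to the cofinal subposet $\on{CTrnk}(\CY)$, flip it to a colimit along the $!$-pushforwards via \cite[Proposition 1.7.5]{DG2}, and then combine Proposition~\ref{Dmod of QCA} with Proposition~\ref{cpct gen colim}. The only (harmless) difference is how you produce the left adjoint of $j_{U,V}^!$ for an inclusion $U \subseteq V$ of co-truncative quasicompact opens: you apply the locally QCA definition of co-truncativeness with $\oCY = V$, whereas the paper obtains it as $j_V^* \circ j_{U,!}$ by base change; both are valid.
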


\sssec{}

One might object that for $U \in \on{CTrnk}(\CY)$, we only have that $j_!$ is defined for $j : U \hookrightarrow \CY$, whereas in the discussion in subsubsection \ref{need for cotruncativeness}, we needed $j_{i, j !}$ to be defined for $j_{i,j} : U_i \hookrightarrow U_j$. However, note that we have a pullback square
$$
\begin{tikzcd}
    U_1 \arrow[r, hook, "j_{12}"] \arrow[d, equal] \arrow[dr, phantom, "\lrcorner", very near start] & U_2 \arrow[d, hook, "j_2"] \\
    U_1 \arrow[r, hook, "j_1"'] & \CY
\end{tikzcd}
$$
and by base change for $\Dmod$, we have that $j_{1,2}^! = j_{1}^! j_{2 *}$, so
$$
  j_{1,2 !} = j_2^* j_{1 !}.
$$

\ssec{Contractive Substacks} \label{contractive substacks}

\sssec{}
In this subsection, we give a sufficient condition of a geometric nature for a substack to be truncative. Consider, as in \cite[5.1.1]{DG2}, the following setup. 

\begin{defn}
Consider $\BA^1$ as a monoid under multiplication, and let $W$ be an $\BA^1$-scheme. Let $p : W \to S$ be an $\BA^1$-equivariant affine morphism of schemes, where the action of $\BA^1$ on $S$ is trivial. Let $\iota : S \to W$ be a section of $p$ such that the composition
$$
  W \to S \to W
$$
corresponds to the action of $0 \in \BA^1$. Let us denote $\CY := W/\BG_m$ and $\CZ := S/\BG_m = S \times \on{pt}/\BG_m$. We have maps
$$
  i : \CZ \hookrightarrow \CY
$$
and
$$
  \pi : \CY \to \CZ
$$
obtained from $\iota$ and $p$ respectively. We will call a closed substack of the form of $i : \CZ \hookrightarrow \CY$ \emph{globally contractive}.
\end{defn}

\sssec{}
The contraction principle (see \cite[Proposition 5.3.2]{DG2}) says that in the above situation, we have an adjunction
$$
  \pi_* : \Dmod(W/\BG_m) \rightleftarrows \Dmod(S/\BG_m) : i_*
$$
so we can identify $i^*$ with $\pi_*$, which is well-defined. In other words, a globally contractive substack is automatically truncative. We can generalize this notion slightly and define

\begin{defn} \cite[5.2.1]{DG2}
    A locally closed substack $\CZ' \hookrightarrow \CY'$ is called \emph{contractive} if there exists a commutative diagram
    $$
    \begin{tikzcd}
        \CZ \arrow[r] \arrow[d]
        & \CY \arrow[d] \\
        \CZ' \arrow[r]
        & \CY'
    \end{tikzcd}
    $$
    such that 
    \begin{enumerate}
        \item[(i)] $\CZ \to \CY$ is globally contractive,
        \item[(ii)] the morphism $\CZ \to \CZ' \underset{\CY'}{\times} \CY$ is an open embedding,
        \item[(iii)] $\CZ \to \CZ'$ is surjective, and
        \item[(iv)] $\CY \to \CY'$ is smooth.
    \end{enumerate}
\end{defn}

\sssec{} \label{locality of contractiveness}
If we have a commutative diagram as in the above definition, except the top arrow is only contractive instead of globally contractive, then we can still conclude that $\CZ' \to \CY'$ is contractive.

\begin{prop} \cite[Corollary 5.2.3]{DG2}
    If a locally closed substack is contractive, then it is truncative.
\end{prop}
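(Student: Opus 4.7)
The strategy is to reduce to the globally contractive case, handled by the contraction principle, via smooth descent along the smooth surjection $q \colon \CZ \to \CZ'$ provided by conditions (ii)--(iv). First I would reduce to the case where $\CZ' \hookrightarrow \CY'$ is closed: factor it as $\CZ' \hookrightarrow \CV' \hookrightarrow \CY'$ with $\CZ' \hookrightarrow \CV'$ closed, then replace $\CY'$ by $\CV'$ and $\CY$ by $p^{-1}(\CV')$. All four contractiveness conditions are preserved, so we may assume $\CZ' \hookrightarrow \CY'$ is closed. By the contraction principle (Proposition 5.3.2 quoted above), the globally contractive $i \colon \CZ \hookrightarrow \CY$ has a well-defined continuous left adjoint $i^* \colon \Dmod(\CY) \to \Dmod(\CZ)$, with $i^* \cong \pi_*$.

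Next, observe that $q$ is the composition of the open embedding $\CZ \hookrightarrow \CZ' \times_{\CY'} \CY$ with the base change of the smooth map $p$, hence is smooth; by (iii) it is also surjective. Consequently $q^*$ (or equivalently $q^!$ up to shift) is continuous, conservative and comonadic, so objects of $\Dmod(\CZ')$ can be recovered from objects of $\Dmod(\CZ)$ equipped with descent data on the \v Cech nerve of $q$.

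The construction of $(i')^*$ then proceeds as follows. For $\CF' \in \Dmod(\CY')$, the object $i^* p^* \CF' \in \Dmod(\CZ)$ is a natural candidate for $q^* (i')^* \CF'$, because taking left adjoints of the tautological identity $(i')_* q_* \cong p_* i_*$ (both sides compute pushforward along $i' \circ q = p \circ i$) gives the comparison $q^* (i')^* \cong i^* p^*$. Smooth base change for $\Dmod$ applied along $p$ and its pullbacks produces a descent datum on $p^* \CF'$ over the \v Cech nerve of $p$, which induces a descent datum on $i^* p^* \CF'$ over the \v Cech nerve of $q$ after applying $i^*$ term-by-term. I define $(i')^* \CF'$ to be the resulting descended object. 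Once this is set up, the adjunction $(i')^* \dashv (i')_*$ is checked by testing against $\G' \in \Dmod(\CZ')$ and using conservativity of $q^*$ to reduce the $\Hom$-comparison to the established $i^* \dashv i_*$ and $p^* \dashv p_*$.

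The main obstacle is the descent step: one must verify that the descent data on $i^*p^*\CF'$ actually satisfies the cocycle condition on the full \v Cech nerve of $q$, rather than only on the first two terms, and that the formation is functorial and continuous in $\CF'$. This amounts to combining smooth base change along the iterated pullbacks of $p$ with the explicit description $i^* \cong \pi_*$ from the contraction principle; surjectivity of $q$ is what ensures the descent is faithful, and smoothness of $p$ (hence of each pullback of $p$) is what makes the base-change isomorphisms available. The reduction to the local closed case and the adjunction verification are then essentially formal.
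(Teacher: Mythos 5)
This statement is quoted from \cite[Corollary 5.2.3]{DG2}; the paper gives no proof of its own, and your reconstruction follows the same two-step strategy as the source: the contraction principle handles the globally contractive embedding upstairs, and the result descends along the smooth surjection $q\colon \CZ\to\CZ'$ (in \cite{DG2} the second step is packaged as ``truncativeness is local in the smooth topology,'' their Section 3). Two caveats. First, in your reduction to the closed case, replacing $\CY$ by $p^{-1}(\CV')$ does \emph{not} preserve condition (i): an open substack of $W/\BG_m$ need not be $\BA^1$-stable, so $\CZ\hookrightarrow p^{-1}(\CV')$ is in general not globally contractive. What survives, and what you actually need, is truncativeness of $\CZ$ inside the smaller open, which follows from truncativeness of $\CZ\hookrightarrow\CY$ by Zariski locality ($i_{\CU}^{*}\CF = i^{*}j_{\CU*}\CF$ for the open $j_\CU\colon\CU\hookrightarrow\CY$). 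Second, the descent step you flag as the main obstacle really is the crux, and the difficulty is slightly different from checking a cocycle condition: the \v Cech nerves of $q$ and of $p$ do not match, since $\CZ\times_{\CZ'}\cdots\times_{\CZ'}\CZ$ is only an \emph{open} substack of $\CZ'\times_{\CY'}(\CY\times_{\CY'}\cdots\times_{\CY'}\CY)$, so closed base change along $p^{[n]}$ lands you in the latter and you must use conditions (ii) and (iii) to pass to the former without losing information. That comparison is exactly the content of the smooth-descent lemma in \cite{DG2}, so your outline is right but the acknowledged gap is where the genuine work lives.
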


\sssec{}
We finish this subsection by giving a convenient way to show that a given locally closed substack is contractive.

\begin{prop} \label{contraction} \cite[Lemma 5.4.3]{DG2}
    Let $\pi : \CW \to \CS$ be an affine schematic morphism of stacks, such that $\BA^1$ acts on $\CW$ by $\CS$-endomorphisms. Suppose that
    \begin{enumerate}
        \item[(i)] the $\CS$-endomorphism of $\CW$ corresponding to $0 \in \BA^1$ factors as $i \circ \pi$ for some section $i : \CS \hookrightarrow \CW$ of $\pi$ and
        \item[(ii)] the $\BG_m$-action on $\CW$ is trivial over a point (however, this does not imply that the $\BG_m$-action on $\CW$ is trivial over $\CS$).
    \end{enumerate}
    Then $i : \CS \hookrightarrow \CW$ is contractive.
\end{prop}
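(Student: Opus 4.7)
The plan is to reduce to the globally contractive case by pulling back along a smooth atlas of $\CS$. Choose a smooth surjective atlas $T \to \CS$ with $T$ an affine scheme, and form $W := \CW \times_\CS T$. Since $\pi$ is affine schematic, $W$ is an affine scheme and the projections $p : W \to T$ and $q : W \to \CW$ are affine and smooth, respectively (the latter as a base-change of the smooth atlas $T \to \CS$). The $\BA^1$-action on $\CW$ pulls back to an $\BA^1$-action on $W$ making $p$ an $\BA^1$-equivariant morphism with trivial action on $T$; the section $i$ pulls back to $\iota : T \to W$, and condition (i) ensures that $\iota \circ p$ is the $0$-action on $W$.

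With the data $(W, T, p, \iota)$ now exactly matching the setup for a globally contractive pair, I would set $\CY := W/\BG_m$ and $\CZ := T/\BG_m = T \times B\BG_m$, so that by construction $\CZ \hookrightarrow \CY$ is globally contractive. This is taken as the top row of the candidate diagram witnessing contractiveness of $i : \CS \hookrightarrow \CW$, with $\CZ' := \CS$ and $\CY' := \CW$. The vertical map $\CZ \to \CS$ then factors as $T \times B\BG_m \to T \to \CS$ and is smooth surjective, giving conditions (iii) and the bottom-left part of (iv).

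The main construction is the smooth morphism $\CY \to \CW$. The map $q : W \to \CW$ is $\BG_m$-equivariant (where $\BG_m$ acts on $\CW$ by restriction of the $\BA^1$-action), so it descends to a morphism of quotient stacks, and one uses condition (ii) --- triviality of the $\BG_m$-action over a point --- to produce and control the descended map $\CY = W/\BG_m \to \CW$ and verify its smoothness. Once the map is in place, the open embedding condition $\CZ \hookrightarrow \CZ' \times_{\CY'} \CY$ of (ii) in the definition of contractive reduces to a fiber calculation using that $\iota$ is a section of $p$ and that the $\BG_m$-action on $W$ fixes $\iota(T)$ pointwise.

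The main obstacle I anticipate is establishing that $\CY \to \CW$ is well-defined and smooth. This is precisely where condition (ii) enters: the grading of $\CO(W)$ as a $\CO(T)$-algebra induced by the $\BA^1$-action, combined with triviality of the $\BG_m$-action at a specified point of $\CS$, should rigidify the situation enough that the quotient stack $W/\BG_m$ maps smoothly down to $\CW$ without carrying spurious gerbe structure. Handling this carefully --- and making the construction local on $\CS$ so that the ``point'' of condition (ii) suffices --- is the heart of the argument.
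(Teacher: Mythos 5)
Your overall strategy --- pull back along a smooth atlas $T \to \CS$ to manufacture a globally contractive pair $T/\BG_m \hookrightarrow W/\BG_m$ with $W = \CW \times_{\CS} T$, and use it as the top row of the square witnessing contractiveness of $i : \CS \hookrightarrow \CW$ with $\CZ' = \CS$, $\CY' = \CW$ --- is the right one, and it is essentially the argument of \cite[Lemma 5.4.3]{DG2}, which this paper cites in place of a proof. (Minor point: you do not need $T$ affine, only that $T$ be a scheme, so that $W$ is a scheme affine over $T$ as the definition of globally contractive requires; insisting on affine $T$ may fail if $\CS$ is not quasi-compact.) However, the step you defer to the end is not a technical verification to be ``controlled'' later --- it is the entire content of the lemma --- and your sketch of how you would carry it out rests on a misreading of hypothesis (ii) that would prevent you from completing it. The $\BG_m$-equivariant smooth map $q : W \to \CW$ descends only to a map $W/\BG_m \to \CW/\BG_m$, not to a map $W/\BG_m \to \CW$. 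To continue down to $\CW$ you must trivialize the $\BG_m$-action on $\CW$, producing an equivalence $\CW/\BG_m \simeq \CW \times \on{pt}/\BG_m$, and then compose with the projection to $\CW$. That trivialization is exactly what hypothesis (ii) supplies: ``trivial over a point'' means the $\BG_m$-action on the stack $\CW$ is trivialized as an action over $\on{pt}$ (i.e., after forgetting the map to $\CS$), even though it is not trivialized as an action by $\CS$-endomorphisms. In the application, $\BG_m$ acts on $\on{pt}/P^-$ by conjugation by $\mu(t)$, an inner automorphism, which is canonically $2$-isomorphic to the identity on $\on{pt}/P^-$ but not compatibly with the projection to $\on{pt}/M$. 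Your closing paragraph instead reads (ii) as triviality of the action on the fiber over ``a specified point of $\CS$''; that condition is far too weak to produce a map $W/\BG_m \to \CW$, and the proposed mechanism via gradings of $\CO(W)$ as an $\CO(T)$-algebra does not recover one.

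Once (ii) is read correctly the gap closes immediately: the map $\CY = W/\BG_m \to \CW/\BG_m \simeq \CW \times \on{pt}/\BG_m \to \CW$ is smooth, since $W/\BG_m \to \CW/\BG_m$ is smooth by descent from the Cartesian square with top edge $W \to \CW$, and $\CW \times \on{pt}/\BG_m \to \CW$ is a gerbe. The remaining conditions are then as you indicate: $T/\BG_m \to \CS$ is surjective, and the identification $\CS \times_{\CW} W \simeq T$ (from $\pi \circ i = \on{id}_{\CS}$) together with the fact that the $\BA^1$-action fixes the image of $i$ shows that $T/\BG_m \to \CS \times_{\CW} (W/\BG_m)$ is an isomorphism, hence in particular an open embedding.
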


\ssec{Stratification of $\Bun_G$} \label{stratification}

\sssec{}
In this subsection, we will record some important facts about (but not define) the Harder-Narasimhan stratification of $\Bun_G$, as well as some related combinatorics. The Harder-Narasimhan stratification of $\Bun_G$ is due to Harder and Narasimhan for $G = \on{GL}_n$ and was defined by Schieder in \cite{Sch} for an arbitrary reductive group $G$.

\sssec{}
There are quasicompact locally closed substacks
$$
  \Bun_G^{(\lambda)} \subset \Bun_G
$$
called the \emph{Harder-Narasimhan Strata}, indexed over $\lambda \in \Lambda_G^{+, \BQ}$, the rational dominant coweights of $G$. A definition is given in \cite[Theorem 7.4.3]{DG2}. Given a subset $S \subset \Lambda_G^{+, \BQ}$, we will denote
$$
  \Bun_G^{(S)} := \underset{\lambda \in S}{\bigcup} \Bun_G^{(\lambda)}.
$$

\sssec{}
The $\Bun_G^{(\lambda)}$ form a stratification of $\Bun_G$. In other words, they are disjoint and their union is all of $\Bun_G$. See \cite[Theorem 7.4.3]{DG2} for details.

\sssec{}
The subset $\CA \subset \Lambda_G^{+, \BQ}$ on which $\Bun_G^{(\lambda)}$ is nonempty is discrete. In particular, given $\theta \in \Lambda_G^{+, \BQ}$, the subset
$$
  \{ \lambda \in \CA \mid \lambda \leq \theta \}
$$
is finite, so
$$
  \Bun_G^{(\leq \theta)} := \underset{\lambda \leq \theta}{\bigcup} \Bun_G^{(\lambda)}
$$
is quasicompact.

\sssec{}
The topology of the Harder-Narasimhan Stratification is described in \cite[Corollary 7.4.11]{DG2}. The punchline is that for $\lambda \leq \lambda'$, $\Bun_G^{\lambda'}$ is contained in the closure of $\Bun_G^{\lambda}$. Combined with the discreteness of $\CA$ above, this implies that $\Bun_G^{(\leq \theta)}$ is an open substack of $\Bun_G$.

\sssec{}
Now we shift our focus to $\Lambda_G^{+, \BQ}$ itself. First, let us note that for any standard parabolic $P$ with Levi $M$, we can consider
$$
  \Lambda_{G, P}^{++, \BQ} := \{\lambda \in \Lambda_G^{+, \BQ} \mid \langle \lambda , \check\alpha_i \rangle = 0 \ \forall i \in \Gamma_M, \langle \lambda , \check\alpha_i \rangle > 0 \ \forall i \notin \Gamma_M\}
$$
(Here and in the sequel, $\Gamma_H$ for a reductive group $H$ will denote the set of nodes of the Dynkin diagram of $H$. We will identify it with the set of simple roots of $H$ and with the set of simple coroots of $H$.)

We call a coweight $\lambda \in \Lambda_{G, P}^{++, \BQ}$ \emph{$P$-regular}.

\begin{defn} \cite[Definition 8.2.2]{DG2}
    A subset $S \subset \Lambda_G^{+, \BQ}$ is \emph{$P$-admissible} if 
    \begin{enumerate}
        \item [(1)] there exists some $P$-regular $\mu$ such that 
        $$
        S \subset \mu + \underset{i \in \Gamma_M}{\Sum} \BQ_{\geq 0} \alpha_i,
        $$
        \item [(2)] if $\lambda_1 \in S$ and $\lambda_2 \in \Lambda_G^{+, \BQ}$, then $\lambda_1 - \lambda_2 \in \underset{i \in \Gamma_M}{\Sum} \BQ_{\geq 0} \alpha_i$ implies that $\lambda_2 \in S$, and
        \item [(3)] for all $\lambda \in S$, $\langle \lambda, \check\alpha_i \rangle > 0$ whenever $i \notin \Gamma_M$.
    \end{enumerate}
    Furthermore, we will say that $S$ is \emph{contractively $P$-admissible} if $S$ is $P$-admissible and for all $\lambda \in S$,
    $$
      \langle \lambda , \check\alpha_i \rangle > 2g-2
    $$
    whenever $i \notin \Gamma_M$.
\end{defn}

\ssec{Truncatability of $\Bun_G$} \label{truncatability}

\sssec{}
In this subsection we will finally conclude that $\Dmod(\Bun_G)$ is compactly generated. We will do this by writing $\Bun_G$ as a union of quasicompact co-truncative open substacks, and we will see that the open substacks are co-truncative by observing that their complements are contractive, and therefore truncative.

\begin{prop} \label{cotruncative open substacks} \cite[Theorem 9.1.2]{DG2}
    The open substacks 
    $$
    \Bun_G^{(\leq \theta)} = \underset{\lambda \leq \theta}{\bigcup} \Bun_G^{(\lambda)}
    $$
    are co-truncative whenever
    $$
    \langle \theta, \check\alpha_i \rangle \geq 2g - 2
    $$
    for all simple roots $\check\alpha_i \in \Gamma_G$.
\end{prop}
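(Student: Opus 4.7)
The plan is to establish co-truncativeness of $\Bun_G^{(\leq\theta)}$ in $\Bun_G$ by proving its closed complement is truncative. Since $\Bun_G$ is only locally QCA, by definition I must check truncativeness of the complement's intersection with each quasicompact open, and by cofinality of $\{\Bun_G^{(\leq\theta')}\}$ among such opens it suffices to fix $\theta' \geq \theta$ and show that
$$
\Bun_G^{(\leq\theta')} \setminus \Bun_G^{(\leq\theta)} \;=\; \bigsqcup_{\substack{\lambda\in\CA \\ \lambda\leq\theta',\ \lambda\not\leq\theta}} \Bun_G^{(\lambda)}
$$
is truncative inside $\Bun_G^{(\leq\theta')}$. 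The strategy is to regroup this finite collection of Harder--Narasimhan strata into a finite disjoint union of contractively $P$-admissible pieces, one per proper standard parabolic, and to invoke the contraction principle on each.

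For the grouping, recall that every $\lambda\in\CA$ is $P$-regular for a unique standard parabolic $P(\lambda)$. For each proper parabolic $P\subsetneq G$ with Levi $M$, I set
$$
S_P \;:=\; \{\lambda\in\CA \mid P(\lambda)=P,\ \lambda\leq\theta',\ \lambda\not\leq\theta\}
$$
and verify that $S_P$ is contractively $P$-admissible. Conditions (1)--(3) of $P$-admissibility follow from $P$-regularity and the structure of the dominant rational cone. The extra inequality $\langle\lambda,\check\alpha_i\rangle > 2g-2$ for $i\notin\Gamma_M$ is where the hypothesis $\langle\theta,\check\alpha_i\rangle \geq 2g-2$ is consumed: if some $\lambda\in S_P$ had $\langle\lambda,\check\alpha_i\rangle \leq 2g-2 \leq \langle\theta,\check\alpha_i\rangle$ at such an $i$, then using $P$-regularity (pairings vanish at $j\in\Gamma_M$) one can express $\theta-\lambda$ as a non-negative rational combination of simple positive roots, forcing $\lambda\leq\theta$ and contradicting $\lambda\not\leq\theta$.

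The principal obstacle is the geometric step of upgrading each contractively $P$-admissible $S_P$ to a contractive substack $\Bun_G^{(S_P)}\hookrightarrow \Bun_G^{(\leq\theta')}$. The input is Schieder's description of $\Bun_G^{(S_P)}$ as the image of the open locus of $\Bun_P$ classifying $P$-reductions whose induced $M$-bundle has degree in $S_P$. Over this presentation there is a natural projection to the corresponding open in $\Bun_M$, and an $\BA^1$-action over $\Bun_M$ coming from a dominant regular central cocharacter of $M$ contracting the unipotent radical $U(P)$ to the identity, with section provided by the Levi splitting. The contractive admissibility condition $\langle\lambda,\check\alpha_i\rangle > 2g-2$ for $i\notin\Gamma_M$ is precisely what forces the relevant $H^1$ of the $U(P)$-twisted bundle to vanish, ensuring the $\BA^1$-flow stays inside the open substack and that the $\BG_m$-action is trivial at the Levi point; these are exactly hypotheses (i) and (ii) of \propref{contraction}. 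Applying that proposition delivers contractiveness of each $\Bun_G^{(S_P)}$, which yields truncativeness. Assembling over the finitely many proper standard parabolics and then letting $\theta'$ vary concludes that $\Bun_G^{(\leq\theta)}$ is co-truncative in $\Bun_G$.
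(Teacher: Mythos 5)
Your overall architecture --- reduce to truncativeness of $\Bun_G^{(\leq\theta')}\setminus\Bun_G^{(\leq\theta)}$ inside $\Bun_G^{(\leq\theta')}$, break that locally closed substack into finitely many pieces of the form $\Bun_G^{(S)}$ with $S$ contractively $P$-admissible, and conclude by the contraction machinery of Proposition \ref{contractiveness of Bun_M} --- is the same as the paper's. But the combinatorial step, which is the only place where the hypothesis $\langle\theta,\check\alpha_i\rangle\geq 2g-2$ does any work, has a genuine gap.

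You group the strata by the parabolic $P(\lambda)$ for which $\lambda$ is strictly $P$-regular, i.e.\ $\Gamma_M=\{j:\langle\lambda,\check\alpha_j\rangle=0\}$, and claim the resulting $S_P$ is contractively $P$-admissible. The key verification fails: from $\langle\lambda,\check\alpha_i\rangle\leq 2g-2\leq\langle\theta,\check\alpha_i\rangle$ at a \emph{single} simple root $i\notin\Gamma_M$, together with $\langle\lambda,\check\alpha_j\rangle=0$ for $j\in\Gamma_M$, one cannot conclude $\theta-\lambda\in\sum_k\BQ_{\geq0}\alpha_k$; you only control the sign of $\langle\theta-\lambda,\check\alpha_k\rangle$ for $k=i$ and $k\in\Gamma_M$, and $\lambda\leq\theta$ requires nonnegativity of \emph{all} simple coroot coefficients of $\theta-\lambda$. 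Concretely, in rank $2$ with $P=B$ (so $\Gamma_M=\emptyset$), take $\langle\lambda,\check\alpha_1\rangle=1$ and $\langle\lambda,\check\alpha_2\rangle$ very large: then both coroot coefficients of $\theta-\lambda$ are negative, so $\lambda\not\leq\theta$ and $\lambda$ lies in your $S_B$, yet $\langle\lambda,\check\alpha_1\rangle\leq 2g-2$, so $S_B$ is not contractively $B$-admissible. Your $S_P$ also violates conditions (1) and (2) of $P$-admissibility itself: for $P=B$, condition (1) forces $S$ to be a singleton, and condition (2) (closure under subtracting $\Gamma_M$-coroots within the dominant cone) is not compatible with requiring $P(\lambda_2)=P$ and $\lambda_2\not\leq\theta$.

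The correct grouping, as in Subsection \ref{truncatability} (following \cite[9.3.2]{DG2}), attaches to each $\lambda\in\CA\cap(\theta,\theta']$ the \emph{larger} parabolic $P_\lambda$ whose Levi corresponds to $\{i:\langle\lambda,\check\alpha_i\rangle\leq 2g-2\}$ --- so that $\langle\lambda,\check\alpha_i\rangle>2g-2$ holds for $i\notin\Gamma_{M_\lambda}$ by construction --- and takes $S_\lambda=\{\mu:\lambda-\mu\in\sum_{i\in\Gamma_{M_\lambda}}\BQ_{\geq0}\alpha_i\}$; the hypothesis on $\theta$ is consumed in checking that $S_\lambda$ stays inside $(\theta,\theta']$, and the resulting finite union over $\lambda$ is allowed to overlap. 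A smaller inaccuracy: the $H^1$-vanishing forced by $\langle\lambda,\check\alpha_i\rangle>2g-2$ is what gives the surjectivity of $\Bun_M^{(S)}\to\Bun_P^{(S)}$ and the smoothness of $\Bun_{P^-}^{(S)}\to\Bun_G$, i.e.\ conditions (iii) and (iv) in the definition of a contractive substack; it is not what hypotheses (i) and (ii) of Proposition \ref{contraction} are about, since those concern only the structure of the $\BA^1$-action.
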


\begin{prop} \label{contractiveness of Bun_M} \cite[Lemma 11.3.4]{DG2}
Let $S \subset \Lambda_G^{+, \BQ}$ be a contractively $P$-admissible subset. Then we have a commutative diagram
$$
 \begin{tikzcd}
    \Bun_M^{(S)} \arrow[r, "\iota^{(S)}_{P^-}"] \arrow[d, "\iota^{(S)}_P"'] & \Bun_{P^-}^{(S)} \arrow[d, "p^{(S)}_{P^-}"] \\
    \Bun_{P}^{(S)} \arrow[r, "p^{(S)}_{P}"'] & \Bun_G
  \end{tikzcd}
$$
such that
\begin{enumerate}
    \item [(a)] $\Bun_M^{(S)} \to \Bun_P^{(S)} \underset{\Bun_G}{\times} \Bun_{P^-}^{(S)}$ is an open embedding.
    \item [(b)] $\Bun_{M}^{(S)} \to \Bun_P^{(S)}$ is surjective.
    \item [(c)] $\Bun_{P^-}^{(S)} \to \Bun_G$ is smooth.
    \item [(d)] $\Bun_M^{(S)} \to \Bun_{P^-}^{(S)}$ is a section of the affine schematic map $\Bun_{P^-}^{(S)} \to \Bun_M^{(S)}$, and therefore it is a closed embedding.
    \item [(e)] $\Bun_{P}^{(S)} \to \Bun_G$ is an isomorphism onto $\Bun_G^{(S)} \subset \Bun_G$.
\end{enumerate}
\end{prop}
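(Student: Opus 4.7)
The proposition is a package of five statements about the compatibility of bundle moduli for the parabolic pair $(P, P^-)$ with common Levi $M$, restricted to the Harder--Narasimhan strata. I would verify each property by translating the group-theoretic structure of the triple $(P, M, P^-)$ into a statement about the corresponding mapping stacks, combined with cohomological vanishing on the curve $X$ afforded by the numerical conditions in the definition of a contractively $P$-admissible subset.

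The basic inputs are the splittings $M \hookrightarrow P$ and $M \hookrightarrow P^-$, the short exact sequences $1 \to U(P) \to P \to M \to 1$ and its analog for $P^-$, and most importantly the fact that multiplication yields an open embedding $M \to P \times_G P^-$ (this is the open Bruhat cell for a pair of opposite parabolics). Applying this openness at the level of mapping stacks and restricting the source to the $(S)$-stratum gives (a). Property (e) is the content of the Harder--Narasimhan canonical reduction (as already used in the definition of the stratification): each $\F_G \in \Bun_G^{(S)}$ has a unique canonical $P$-reduction of type $\lambda \in S$, identifying $\Bun_P^{(S)}$ with $\Bun_G^{(S)}$. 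Condition (1) of $P$-admissibility (existence of a $P$-regular reference $\mu$) keeps $\lambda$ in the $P$-regular cone where canonical reductions exist, while condition (2) expresses downward closedness of $S$ under the HN dominance order, so that the image is genuinely a substack rather than only a constructible locus.

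Properties (c) and (d) follow from a cohomological vanishing analysis. The map $\Bun_{P^-} \to \Bun_M$ (via $P^- \twoheadrightarrow M$) has fibers given by the stack of sections of the associated unipotent bundle $\F_M \times_M U(P^-)$. Filtering $U(P^-)$ by its lower central series, the graded pieces are line bundles on $X$ labeled by the negatives of the roots appearing in $U(P)$. For $\F_M$ of degree $\lambda \in S$, condition (3) of $P$-admissibility forces $\langle \lambda, \check\alpha \rangle > 0$ for every $\check\alpha$ appearing in $U(P)$, making the corresponding line bundles for $U(P^-)$ have negative degree. Their $H^0$ then vanishes, the stack of sections reduces to an affine scheme, and $\Bun_{P^-}^{(S)} \to \Bun_M^{(S)}$ is affine schematic, proving (d); the section is supplied by the splitting $M \hookrightarrow P^-$. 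For (c) one applies a companion tangent-complex calculation along $\Bun_{P^-}^{(S)} \to \Bun_G$: the obstruction to smoothness lives in $H^1$ of associated bundles controlled by analogous line bundles, which vanishes on the stratum by the positive-degree estimates (here the contractive strengthening $\langle \lambda, \check\alpha_i \rangle > 2g-2$ enters).

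The main obstacle is (b), the surjectivity of $\Bun_M^{(S)} \to \Bun_P^{(S)}$. Under the identification in (e), this asks that every $\F_G \in \Bun_G^{(S)}$ admit a reduction to $M$ compatible with its canonical $P$-reduction, i.e., that the associated unipotent bundle $\F_M \times_M U(P) \to X$ has a section. Filtering $U(P)$ by the lower central series, sections exist provided $H^1$ of each associated line bundle vanishes. These line bundles have \emph{positive} degrees equal to $\langle \lambda, \check\alpha \rangle$ for the roots $\check\alpha$ of $U(P)$, and any such $\check\alpha$ is a nonnegative integral combination of simple roots with at least one simple root from $\Gamma_G \setminus \Gamma_M$ appearing. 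Hence the assumption $\langle \lambda, \check\alpha_i \rangle > 2g-2$ for $i \notin \Gamma_M$ in the contractive refinement yields $\langle \lambda, \check\alpha \rangle > 2g-2$ uniformly, which is exactly the degree bound needed for $H^1$ to vanish on the genus $g$ curve $X$. This is the step where the contractive hypothesis, rather than mere $P$-admissibility, is essential.
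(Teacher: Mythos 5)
This proposition is imported verbatim from \cite[Lemma 11.3.4]{DG2}; the paper gives no proof of its own, so the comparison is with the source (and with the paper's Iwahori analogues in Sections \ref{global generation subsection}--\ref{contractive}). Your sketch reconstructs that argument faithfully: the open cell $P\backslash PP^-/P^-\cong \on{pt}/M$ for (a), the Harder--Narasimhan canonical reduction for (e), and the lower-central-series filtration of $U(P^{\pm})$ together with degree estimates for the cohomological vanishing behind (b), (c), (d), with the contractive bound $\langle\lambda,\check\alpha_i\rangle>2g-2$ entering exactly where you say it does. The one imprecision is that the graded pieces of $\fn(P)_{\F_M}$ are vector bundles $V_{M,\check\alpha}$ attached to $M$-representations rather than line bundles, and the underlying $M$-bundle need not be semistable, so the $\on{H}^1$-vanishing requires first passing to the canonical $P_\lambda$-reduction inside $M$ and then combining semistability with Serre duality --- precisely the mechanism of \lemref{numerology} and \propref{global generation} in the Iwahori variant --- but this is a detail of execution rather than a gap in the approach.
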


\sssec{} \cite[Proposition 11.3.7]{DG2}
Furthermore, there exists an action of $\BA^1$ on $\Bun_{P^-} \to \Bun_M$ that satisfies the assumptions of Proposition \ref{contraction}, and this action preserves $\Bun_{P^-}^{(S)}$. For the construction of this action, see \cite[11.2]{DG2}. This implies that $\Bun_M^{(S)} \to \Bun_{P^-}^{(S)}$ is contractive, and stability properties of contractiveness imply that $\Bun_{P}^{(S)} = \Bun_{G}^{(S)} \to \Bun_{G}$ is also contractive.

\sssec{}
We want to see that $\Bun_G^{(\leq \theta)}$ is co-truncative in $\Bun_G$ whenever $\theta$ is ``deep enough'' in every direction as in the statement of Proposition \ref{cotruncative open substacks}. In other words, we want to check that $\Bun_G^{(\leq \theta)} \cap \oCY$ is co-truncative in $\oCY$ for all quasicompact open substacks $\oCY$ of $\Bun_G$. However, it actually suffices to check this for a family of quasicompact open substacks that covers $\Bun_G$, and a convenient such family is
$$
  \{ \Bun_G^{(\leq \theta')} \}_{\theta' \geq \theta}.
$$

\sssec{} \label{finite union of admissible}
Now we observe that
$$
  \Bun_G^{(\leq \theta')} \setminus \Bun_G^{(\leq \theta)} = \underset{\lambda \nleq \theta, \lambda \leq \theta'}{\bigcup} \Bun_G^{(\lambda)}
$$
and we write this as a finite union of $\Bun_G^{(S)}$ for contractively $P$-admissible subsets $S \subset \Lambda_G^{+, \BQ}$ (where $P$ is allowed to vary).

\sssec{} \cite[9.3.2]{DG2}
Let 
$$
(\theta, \theta'] = \{ \lambda \in \Lambda_G^{+, \BQ} \mid \lambda \nleq \theta, \lambda \leq \theta'\}.
$$ 
For each $\lambda \in (\theta, \theta']$ we will describe how to produce a parabolic subgroup $P_{\lambda}$ and a contractively $P_{\lambda}$-admissible subset $S_{\lambda} \subset (\theta, \theta']$ containing $\lambda$.

\sssec{}
Let $P_{\lambda}$ be the parabolic subgroup whose Levi $M_{\lambda}$ corresponds to the subset
$$
  \{ i \in \Gamma_G \mid \langle \lambda, \check{\alpha}_i \rangle \leq 2g-2 \}.
$$
Now we take $S_{\lambda}$ to be the contractively $P_{\lambda}$-admissible subset
$$
  \{ \mu \in \Lambda_G^{+, \BQ} \mid \lambda - \mu \in \underset{i \in \Gamma_{M_\lambda}}{\Sum} \BQ_{\geq 0} \alpha_i \}.
$$

\sssec{}
Note that $\CA \cap (\theta, \theta']$ is finite, so
$$
\Bun_G^{(\leq \theta')} \setminus \Bun_G^{\leq \theta} = \underset{\lambda \in \CA \cap (\theta, \theta']}{\bigcup} \Bun_G^{(S_\lambda)}
$$
is a finite union of truncative substacks. Since finite unions of truncative locally closed substacks are truncative, this is sufficient to conclude Proposition \ref{cotruncative open substacks}.

\sssec{}
Clearly the $\Bun_G^{(\leq \theta)}$ cover $\Bun_G$, so Proposition \ref{cotruncative open substacks} implies that 

\begin{prop}
    $\Bun_G$ is truncatable. Since $\Bun_G$ is locally QCA, by Proposition \ref{truncatable}, $\Dmod(\Bun_G)$ is compactly generated.
\end{prop}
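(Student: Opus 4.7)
The plan is to assemble the ingredients from the preceding subsections into a short synthesis. What remains to be checked is that the quasicompact co-truncative open substacks $\Bun_G^{(\leq \theta)}$ are cofinal in $\on{Open}(\Bun_G)$; the compact generation statement then follows formally.

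First, I would recall from Proposition \ref{cotruncative open substacks} that $\Bun_G^{(\leq \theta)}$ is co-truncative as soon as $\langle \theta, \check\alpha_i \rangle \geq 2g-2$ for every simple root $\check\alpha_i \in \Gamma_G$. Call such a $\theta$ \emph{deep}. The set of deep $\theta \in \Lambda_G^{+, \BQ}$ is cofinal in $\Lambda_G^{+, \BQ}$: given any $\theta_0$, we can find $\theta \geq \theta_0$ satisfying the required inequalities by adding a sufficiently large positive combination of fundamental coweights. Moreover, because the set $\CA$ of HN-labels on which $\Bun_G^{(\lambda)}$ is nonempty is discrete, any principal $G$-bundle $\CF_G$ over a field lies in some stratum $\Bun_G^{(\lambda)}$ with $\lambda \in \CA$, and hence in $\Bun_G^{(\leq \theta)}$ for any deep $\theta \geq \lambda$.

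Next, I would assemble the co-truncative substacks into a cover: the family
\[
\{ \Bun_G^{(\leq \theta)} \mid \theta \in \Lambda_G^{+, \BQ} \text{ deep} \}
\]
is cofinal in $\on{Open}(\Bun_G)$, because any quasicompact open $\oCY \subset \Bun_G$ meets only finitely many strata $\Bun_G^{(\lambda)}$ (by quasicompactness and the discreteness of $\CA$), so $\oCY \subset \Bun_G^{(\leq \theta)}$ for any deep $\theta$ dominating all such $\lambda$. This shows that $\on{CTrnk}(\Bun_G)$ is cofinal in $\on{Open}(\Bun_G)$, which is precisely the definition of truncatability.

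Finally, since $\Bun_G$ is locally QCA (its automorphism groups are affine), Proposition \ref{truncatable} applies and yields that $\Dmod(\Bun_G)$ is compactly generated. I do not anticipate any obstacle here: everything nontrivial has been subsumed into the truncativeness results of Subsection \ref{truncatability}, and the remaining step is a bookkeeping verification that the deep $\theta$ yield a cofinal system of quasicompact opens.
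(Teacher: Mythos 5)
Your proposal is correct and follows essentially the same route as the paper: the paper simply observes that the $\Bun_G^{(\leq \theta)}$ with $\theta$ deep cover $\Bun_G$, invokes Proposition \ref{cotruncative open substacks} for their co-truncativeness, and then applies Proposition \ref{truncatable}. The only difference is that you spell out the routine cofinality verification (deep $\theta$ are cofinal in $\Lambda_G^{+,\BQ}$ and every quasicompact open sits inside some $\Bun_G^{(\leq\theta)}$), which the paper leaves implicit.
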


\section{The Case $G = \on{SL}_2$} \label{SL2}
In this section, we will examine the case of $G = \on{SL}_2$. In both the unramified and Iwahori ramified versions, most of the conceptual ideas of the proof of compact generation can be seen in this case.

\ssec{Compact Generation of $\Dmod(\Bun_{\on{SL}_2})$} \label{SL2 unramified}

\sssec{}
In this subsection, we will reproduce the discussion from \cite[Section 6]{DG2} that describes the argument that $\Dmod(\Bun_G)$ is compactly generated for $G = \on{SL}_2$.

\sssec{}
The locally QCA algebraic stack $\Bun_G$ represents principal $\on{SL}_2$-bundles on $X$, i.e. rank $2$ vector bundles $\M$ with trivial determinant.

\sssec{}
$\Bun_G$ is an increasing union of the quasicompact open substacks $\Bun_G^{(\leq m)}$. $\Bun_G^{(\leq m)}$ represents $\on{SL}_2$-bundles $\M$ such that every line subbundle of $\M$ has degree at most $m$. 

\sssec{}
To see that the $\Bun_G^{(\leq m)}$ are co-truncative for $m \geq \on{max}(g-1, 0)$, we want to show that $\Bun_G^{(n)} = \Bun_G^{(\leq n)} \setminus \Bun_G^{(\leq n-1)}$ is truncative for $n > \on{max}(g-1, 0)$.

\sssec{}
Now consider the stack $\Bun_B$ that classifies short exact sequences
$$
0 \to \L \to \M \to \L^{-1} \to 0
$$
and the substacks $\Bun_B^n$ that classify such short exact sequences with $\on{deg}(\L) = n$.

\sssec{}
We have a fiber square
\[\begin{tikzcd}
	{\Bun_T^n} && {\Bun_{B}^{-n}} \\
	& {} \\
	{\Bun_B^n} && {\Bun_G^{(\leq n)}}
	\arrow["{i^-}", from=1-1, to=1-3]
	\arrow["\lrcorner"{anchor=center, pos=0.125}, draw=none, from=1-1, to=2-2]
	\arrow["i"', from=1-1, to=3-1]
	\arrow["{p^-}", from=1-3, to=3-3]
	\arrow["p"', from=3-1, to=3-3]
\end{tikzcd}\]
where the stack $\Bun_T$ classifies line bundles $\L$ on $X$, and the substacks $\Bun_T^n$ classify degree $n$ line bundles, and the maps $i$ and $i^-$ sends $\L$ to
$$
0 \to \L \to \L \oplus \L^{-1} \to \L^{-1} \to 0
$$
and
$$
0 \to \L^{-1} \to \L \oplus \L^{-1} \to \L \to 0
$$
respectively.

\sssec{}
The map $p$ is an isomorphism onto the substack $\Bun_G^{(n)} \subset \Bun_G^{(\leq n)}$ essentially because an $\on{SL}_2$-bundle can have at most one line subbundle of degree $n$, since if $\L, \L'$ are two such subbundles, we obtain a map
$$
\L' \to \M \to \L^{-1} \in \on{Hom}(\L', \L^{-1}) = \on{H}^0(X, \L'^{-1} \otimes \L^{-1}) = 0
$$
as $\on{deg}(\L'^{-1} \otimes \L^{-1}) = -2n < 0$ (and vice versa), so $\L = \L'$.

\sssec{}
The map $i$ is a surjection because every $\on{SL}_2$-bundle with a line subbundle of degree $n$ decomposes as a direct sum, since $\on{Ext}^1(\L^{-1}, \L) = \on{H}^1(X, \L^{\otimes 2}) = 0$, as $\on{deg}(\L^{\otimes 2}) = 2n > 2g - 2$.

\sssec{}
The map $p^-$ is smooth because the cofiber of the differential of $p^-$ at a point
$$
0 \to \L^{-1} \to \M \to \L \to 0
$$
is $\on{H}^1(X, \L^{\otimes 2}) = 0$.

\sssec{}
Finally, here is a description of the map $\i^-$. There is a vector bundle $\V$ over $\Bun_T^n$ with fiber over $\L \in \Bun_T^n$ equal to $\on{Ext}^1(\L, \L^{-1}) = \on{H}^1(X, \L^{\otimes -2})$. Now let $\Pic^n$ denote the scheme that classifies degree $n$ line bundles on $X$ together with a trivialization at some fixed point $x \in X$. Now consider the map
$$
s: \Pic^n \to \Bun_T^n
$$
given by forgetting the choice of realization. This realizes $\Pic^n$ as a $\mathbb{G}_m$ torsor over $\Bun_T^n$.
Pulling the vector bundle $\V$ back along $s$ yields a vector bundle over $Pic^n$. Taking the zero section
$$
\Pic^n \to s^* \V
$$
and quotienting by $\mathbb{G}_m$ 
$$
\Pic^n/\mathbb{G}_m \to s^* \V/\mathbb{G}_m
$$
identifies with the map
$$
i^-: \Bun_T^n \to \Bun_B^{-n}
$$
so $i^-$ is contractive.

\ssec{Compact Generation of $\Dmod(\Bun_{\on{SL}_2}^{\on{I}})$} \label{SL2 Iwahori}

\sssec{}
Now we move on to consider the $G = \on{SL}_2$ case in the Iwahori ramified setting.

\sssec{}
The locally QCA algebraic stack $\Bun_G^{\on{I}}$ represents pairs $(\M, L)$ where $\M$ is a principal $\on{SL}_2$-bundle on $X$, and $L$ is a one-dimensional subspace of the two-dimensional vector space $x^*\M$. There is of course a map $\Bun_G^{\on{I}} \to \Bun_G$ given by forgetting $L$.

\sssec{}
The stack $\Bun_G^{\on{I}}$ is an increasing union of the quasicompact open substacks $\Bun_G^{\on{I}, (\leq m)}$, which is the base change of $\Bun_G^{(\leq m)}$ along $\Bun_G^{\on{I}} \to \Bun_G$.

\sssec{}
To see that the $\Bun_G^{\on{I}, (\leq m)}$ are co-truncative for $m \geq \on{max}(g-1, 0)$, we want to show that $\Bun_G^{\on{I}, (n)} = \Bun_G^{\on{I}, (\leq n)} \setminus \Bun_G^{\on{I}, (\leq n-1)}$ is truncative for $n > \on{max}(g-1, 0)$.

\sssec{}
The stack $\Bun_B^{\on{I}}$ classifies pairs
$$
 (0 \to \L \to \M \to \L^{-1} \to 0, L),
$$
and for $n > \on{max}(g-1, 0)$
$$
 \Bun_B^{\on{I}, n} \to \Bun_G^{\on{I}, (n)}
$$
is an isomorphism.

\sssec{}
The stack $\Bun_B^{\on{I}}$ has two natural strata. For a given point of $\Bun_B^{\on{I}}$, we can ask whether or not $L$ is equal to the fiber $x^* \L \subset x^* \M$. The closed strata, denoted by $\prescript{\backslash}{}{\Bun_B^{\on{I}}}$, can be identified with $\Bun_B$, and the open strata we will denote by $\prescript{\circ}{}{\Bun_B^{\on{I}}}$.

\sssec{}
First, let us consider the open strata. There is a fiber square
\[\begin{tikzcd}
	{\Bun_T^n} && {\Bun_{B}^{-n}} \\
	& {} \\
	{\prescript{\circ}{}{\Bun_B^{\on{I}, n}}} && {\Bun_G^{\on{I}, (\leq n)}}
	\arrow["{i^-}", from=1-1, to=1-3]
	\arrow["\lrcorner"{anchor=center, pos=0.125}, draw=none, from=1-1, to=2-2]
	\arrow["i"', from=1-1, to=3-1]
	\arrow["{p^-}", from=1-3, to=3-3]
	\arrow["p"', from=3-1, to=3-3]
\end{tikzcd}\]
where $i$ sends $\L$ to
$$
(0 \to \L \to \L \oplus \L^{-1} \to \L^{-1} \to 0, x^*\L^{-1}) \in \prescript{\circ}{}{\Bun_B^{\on{I}, n}}.
$$

\sssec{}
We claim that $i$ is surjective. Let $(0 \to \L \to \M \to \L^{-1} \to 0, L)$ be a point of $\prescript{\circ}{}{\Bun_B^{\on{I}, n}}$. We already know that $\M$ splits as $\L \oplus \L^{-1}$, so we may consider $L \in x^*\L \oplus x^*\L^{-1}$. Let
$$
v \in x^*\L, w \in x^*\L^{-1}
$$
be a basis of $x^*\M$.
Since $L \neq x^*\L$, there exists some vector space automorphism $$\phi \in \on{Aut}(x^*\M)$$ of the form
$$
\phi(v) = v, \phi(w) = cv + w
$$
that sends $L$ to $x^*\L^{-1}$.

We will now see that $\phi$ can be lifted to an automorphism of $\M$. We have
$$
\on{End}(\M) = \on{Hom}(\L, \L) \oplus \on{Hom}(\L^{-1}, \L) \oplus \on{Hom}(\L, \L^{-1}) \oplus \on{Hom}(\L^{-1}, \L^{-1}).
$$
$\on{Hom}(\L, \L^{-1}) = \on{H}^0(X, \L^{\otimes -2}) = 0$ since $\on{deg}(\L^{\otimes -2}) < 0$. On the other hand, $\on{Hom}(\L^{-1}, \L) = \on{H}^0(X, \L^{\otimes 2})$, and we know that since $\on{deg}(\L^{\otimes 2}) > 2g - 1$, there exists a global section $f$ that does not vanish at $x \in X$. Without loss of generality we may normalize it so that its value at $x$ is $c$. Now the automorphism of $\M$ given by
$$
\Phi = (\on{id}_{\L}, f, 0, \on{id}_{\L^{-1}}) \in \on{Hom}(\L, \L) \oplus \on{Hom}(\L^{-1}, \L) \oplus \on{Hom}(\L, \L^{-1}) \oplus \on{Hom}(\L^{-1}, \L^{-1})
$$
restricts to $\phi$.

Thus $\Phi$ yields an isomorphism
$$
(0 \to \L \to \M \to \L^{-1} \to 0, L) \iso (0 \to \L \to \M \to \L^{-1} \to 0, x^*\L^{-1}),
$$
the latter of which is clearly in the image of $i$.

\sssec{}
The map $p^-$ is smooth, because the cofiber of the differential of $p^-$ at a point 
$$
0 \to \L^{-1} \to \M \to \L \to 0
$$
is $\on{H}^1(X, \L^{\otimes 2}(-x)) = 0$ since $\on{deg}(\L^{\otimes 2}(-x)) \geq 2g - 1$.

\sssec{}
The map $i^-$ is the same map as in the unramified case, and we have already seen that it is contractive.

\sssec{}
There is also a fiber square for the closed strata:
\[\begin{tikzcd}
	{\Bun_T^n} && {\prescript{\circ}{}{\Bun_{B}^{\on{I}, -n}}} \\
	& {} \\
	{\Bun_B^n} && {\Bun_G^{\on{I}, (\leq n)}}
	\arrow["{i^-}", from=1-1, to=1-3]
	\arrow["\lrcorner"{anchor=center, pos=0.125}, draw=none, from=1-1, to=2-2]
	\arrow["i"', from=1-1, to=3-1]
	\arrow["{p^-}", from=1-3, to=3-3]
	\arrow["p"', from=3-1, to=3-3]
\end{tikzcd}\]

\sssec{}
Here $i$ is the same map as in the unramified case, so we know that it is surjective.

\sssec{}
The map $p^-$ can be written as a composition
$$
\prescript{\circ}{}{\Bun_{B}^{\on{I}, -n}} \to \Bun_{B}^{\on{I}, -n} \to \Bun_G^{\on{I}, (\leq n)}.
$$
The first map is an open embedding, and the second map is a base change of the smooth map $\Bun_B^{-n} \to \Bun_G^{(\leq n)}$, so we conclude that $p^-$ is smooth.

\sssec{}
Here is a description of the map $i^-$. There is a vector bundle $\V$ over $\Bun_T^n$ with fiber over $\L \in \Bun_T^n$ equal to $\on{H}^1(X, \L^{\otimes -2}(-x))$. Again, we can take the pullback of $\V$ along $s: \Pic^n \to \Bun_T^n$. This time taking the zero section
$$
\Pic^n \to s^* \V
$$
and quotienting by $\mathbb{G}_m$
$$
\Pic^n/\mathbb{G}_m \to s^* \V/\mathbb{G}_m
$$
identifies with 
$$
i^- : \Bun_T^n \to \prescript{\circ}{}{\Bun_B^{\on{I}, -n}}
$$
so the latter is contractive.

\section{Compact Generation of $\Dmod(\Bun_G^{\on{I}})$} \label{BunGI}
In this section, we will prove our main result, i.e. that $\Dmod(\Bun_G^{\on{I}})$ is compactly generated.

\ssec{Reduction Steps} \label{reduction steps}
\sssec{}
Like in the unramified case, our strategy will be to show that $\Bun_G^{\on{I}}$ is truncatable by writing $\Bun_G^{\on{I}}$ as a union of quasicompact co-truncative open substacks. Since $\Bun_G^{\on{I}}$ is locally QCA, we may then invoke Proposition \ref{truncatable} to conclude that $\Dmod(\Bun_G^{\on{I}})$ is compactly generated.

\sssec{}
From the beginning, we fix a point $x \in X$ (where the Iwahori level structure will live), and this induces a map
$$
\Bun_G = \Maps(X, \on{pt}/G) \to \Maps(\on{pt}, \on{pt}/G) = \on{pt}/G.
$$
Using this map, we define

\begin{defn} The \emph{moduli stack of principal $G$-bundles on $X$ with Iwahori level structure at $x \in X$} is given by the fiber product 
    $$
    \Bun_G^{\on{I}} := \Bun_G \underset{\on{pt}/G}{\times} \on{pt}/B.
    $$
Note that there is a canonical map
    $$
    \Bun_G^{\on{I}} \to \Bun_G
    $$
given by forgetting the Iwahori level structure.
\end{defn}

\sssec{}
Denote for $\lambda \in \Lambda_G^{+, \BQ}$
$$
\Bun_G^{\on{I}, (\lambda)} := \Bun_G^{(\lambda)} \underset{\on{pt}/G}{\times} \on{pt}/B = \Bun_G^{(\lambda)} \underset{\Bun_G}{\times} \Bun_G^{\on{I}}
$$
and for $S \subset \Lambda_G^{+, \BQ}$
$$
\Bun_G^{\on{I}, (S)} :=\Bun_G^{(S)} \underset{\on{pt}/G}{\times} \on{pt}/B = \Bun_G^{(S)} \underset{\Bun_G}{\times} \Bun_G^{\on{I}}.
$$

\sssec{}
Since the map 
$$
\Bun_G^{\on{I}} \to \Bun_G
$$
is finite type, the open substacks
$$
\Bun_G^{\on{I}, (\leq \theta)} := \Bun_G^{(\leq \theta)} \underset{\Bun_G}{\times} \Bun_G^{\on{I}}
$$
are quasicompact and we have
$$
\Bun_G^{\on{I}} = \underset{\theta \in \Lambda_G^{+, \BQ}}{\bigcup} \Bun_G^{\on{I}, (\leq \theta)}.
$$
Therefore, similarly to the non-Iwahori level structure case, it is sufficient to show that

\begin{prop} \label{first reduction}
$\Bun_G^{\on{I}, (\leq \theta)}$ is co-truncative whenever 
$$
   \langle \theta, \check\alpha_i \rangle \geq 2g - 1
$$
for all simple roots $\check\alpha_i \in \Gamma_G$.
\end{prop}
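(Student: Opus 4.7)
The plan mirrors the argument summarized in \secref{truncatability}. The family $\{\Bun_G^{\on{I}, (\leq \theta')}\}_{\theta' \geq \theta}$ is an open cover of $\Bun_G^{\on{I}}$ by quasicompact substacks, so co-truncativeness of $\Bun_G^{\on{I}, (\leq \theta)}$ reduces to showing that, for each $\theta' \geq \theta$, the closed complement
$$
\Bun_G^{\on{I}, (\leq \theta')} \setminus \Bun_G^{\on{I}, (\leq \theta)}
$$
is truncative inside $\Bun_G^{\on{I}, (\leq \theta')}$. Since finite unions of truncative locally closed substacks are truncative, it is enough to exhibit this complement as a finite union of truncative pieces of the form $\Bun_G^{\on{I}, (S)}$ for appropriate $S \subset (\theta, \theta']$.

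For the combinatorial decomposition I would repeat the recipe of subsubsection~\ref{finite union of admissible}, but with the cutoff $2g-2$ replaced by $2g-1$. Concretely, for each $\lambda \in \CA \cap (\theta, \theta']$ take $M_\lambda$ to be the Levi whose Dynkin nodes are $\{i \in \Gamma_G \mid \langle \lambda, \check\alpha_i \rangle \leq 2g-1\}$, and let $S_\lambda$ be the associated saturated subset. A short check (using that $\langle \alpha_j, \check\alpha_i\rangle \leq 0$ for $j \in \Gamma_{M_\lambda}$ and $i \notin \Gamma_{M_\lambda}$) gives $\langle \mu, \check\alpha_i\rangle > 2g-1$ for every $\mu \in S_\lambda$ and every $i \notin \Gamma_{M_\lambda}$; this shift from $2g-2$ to $2g-1$ is precisely what produces the hypothesis of the proposition. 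Since $\CA \cap (\theta, \theta']$ is finite, it suffices to show that $\Bun_G^{\on{I}, (S)}$ is truncative whenever $S \subset \Lambda_G^{+,\BQ}$ is contractively $P$-admissible in this Iwahori-sharpened sense.

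At this point the Iwahori data enters. Following the outline in the introduction, I would stratify $\Bun_G^{\on{I}, (S)}$ further by the relative position at $x$ of the Iwahori $B$-reduction and the canonical $P$-reduction attached to the Harder--Narasimhan stratum. These positions are indexed by the finite set $W_P \backslash W / W_B$, yielding finitely many locally closed substacks $\Bun_G^{\on{I}, (S), w}$. For each $w$ I would construct an Iwahori-decorated analog of the diagram in Proposition \ref{contractiveness of Bun_M},
$$
\begin{tikzcd}
\Bun_M^{\on{I}, (S), w} \arrow[r] \arrow[d] & \Bun_{P^-}^{\on{I}, (S), w} \arrow[d] \\
\Bun_P^{\on{I}, (S), w} \arrow[r] & \Bun_G^{\on{I}},
\end{tikzcd}
$$
and verify the four geometric conditions (a)--(e) in this enhanced setting. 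The $\BA^1$-action on $\Bun_{P^-} \to \Bun_M$ from \cite[11.2]{DG2} preserves the $P^-$-reduction and therefore lifts to $\Bun_{P^-}^{\on{I}, (S), w} \to \Bun_M^{\on{I}, (S), w}$ once the Iwahori and the $P^-$-reduction are placed in their standard relative position. Proposition \ref{contraction} then gives that each $\Bun_G^{\on{I}, (S), w}$ is contractive, hence truncative.

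The main obstacle is the verification of the Iwahori analogs of (a)--(e). Conditions (c), (d), (e) should follow formally from their unramified counterparts by base change along $\Bun_G^{\on{I}} \to \Bun_G$ combined with the stratification by $w$. The genuinely new input is the surjectivity (b): given a point of $\Bun_P^{\on{I}, (S), w}$ one must split the underlying $P$-bundle to an $M$-bundle and then exhibit an automorphism of the $M$-bundle that carries the Iwahori reduction at $x$ into its standard position $w$. The toy case of \secref{SL2 Iwahori} shows the shape this takes: produce a vector-space automorphism at $x$ of a prescribed unipotent form and lift it to a global bundle automorphism using a global section of a twist of the unipotent radical that is \emph{nonzero at} $x$. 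In general, this amounts to a global-generation-at-$x$ statement for $\fn(P)$ twisted by an $S$-regular $M$-bundle, which demands that the relevant line bundles have degree strictly exceeding $2g-2$ \emph{after} the twist by $-x$ forced by evaluation at the Iwahori point, i.e. bare degrees at least $2g-1$ in every direction transversal to $M$. This is exactly the hypothesis $\langle \theta, \check\alpha_i\rangle \geq 2g-1$.
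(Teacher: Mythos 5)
Your architecture coincides with the paper's: reduce to Iwahori-contractively $P$-admissible subsets $S$ with the cutoff shifted from $2g-2$ to $2g-1$, stratify $\Bun_G^{\on{I},(S)}$ by the relative position $w$ of the Iwahori reduction and the Harder--Narasimhan $P$-reduction, and run the contraction diagram of Proposition \ref{contractiveness of Bun_M} with Iwahori decorations, with the new input being global generation \emph{at the point $x$} of $\fn(P)_{\F_M}$. Your identification of where the bound $2g-1$ comes from, and your treatment of the surjectivity (b) via lifting a pointwise unipotent automorphism to a global one, are exactly right.

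There is, however, one step that would fail as written: the claim that smoothness of $\prescript{w}{}{\Bun_{P^-}^{\on{I},(S)}} \to \Bun_G^{\on{I}}$ ``follows formally from the unramified counterpart by base change combined with the stratification by $w$.'' The full base change $\Bun_{P^-}^{(S)} \underset{\Bun_G}{\times} \Bun_G^{\on{I}} \to \Bun_G^{\on{I}}$ is indeed smooth for free, but $\prescript{w}{}{\Bun_{P^-}^{\on{I},(S)}}$ is only a \emph{locally closed} stratum of the source (it is open only for the $w$ corresponding to the big cell $P^-B$), and a locally closed substack of the source of a smooth map need not be smooth over the target. Concretely, the relative tangent fiber at a point is
$$
\Gamma\bigl(X, (\fg/\fp^-)_{\F_{P^-}}\bigr) \underset{(\fg/\fp^-)_{x^*\F_{P^-}}}{\times} \bigl(\fb/(\fb^w \cap \fp^-)\bigr),
$$
and vanishing of its $H^1$ requires the map $\on{H}^0\bigl(X, (\fg/\fp^-)_{\F_{P^-}}\bigr) \oplus \bigl(\fb/(\fb^w \cap \fp^-)\bigr) \to (\fg/\fp^-)_{x^*\F_{P^-}}$ to be surjective. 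For $w$ with $\fb^w \subset \fp^-$ (e.g.\ the longest element) the second summand contributes nothing, so one genuinely needs surjectivity of $\on{H}^0(X, (\fg/\fp^-)_{\F_{P^-}}) \to (\fg/\fp^-)_{x^*\F_{P^-}}$, i.e.\ global generation at $x$ of $(\fg/\fp^-)_{\F_{P^-}}$ --- the same sharpened-bound lemma you correctly isolate for (b). So the gap is repairable with an ingredient you already have in hand, but smoothness is a second, independent place where it must be invoked; it is not inherited from the unramified case.
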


\sssec{}
Note that the bound has changed from $2g - 2$ to $2g - 1$. The reason why will become clear in the sequel. For now, we define

\begin{defn}
    A $P$-admissible subset $S \subset \Lambda_G^{+, \BQ}$ is Iwahori-contractively $P$-admissible if it is $P$-admissible and for all $\lambda \in S$,
    $$
    \langle \lambda, \check\alpha_i \rangle > 2g - 1
    $$
    whenever $i \notin \Gamma_M$.
\end{defn}

\sssec{}
In order to show that $\Bun_G^{\on{I}, (\leq \theta)}$ is co-truncative, we note that for $\theta' \geq \theta$, as in Section \ref{finite union of admissible}, we can write
$$
\Bun_G^{\on{I}, (\leq \theta')} \setminus \Bun_G^{\on{I}, (\leq \theta)}
$$
as a finite union of 
$$
\Bun_G^{\on{I}, (S)}
$$
for Iwahori-contractively $P$-admissible subsets $S \subset \Lambda_G^{+, \BQ}$ (where $P$ is allowed to vary).

\sssec{}
Now we come to the first point of divergence from the proof of the unramified case. We will further stratify the substacks $\Bun_G^{\on{I}, (S)}$ above using \emph{relative position}.

\sssec{}
Let $S$ be Iwahori-contractively $P$-admissible as above. Just as we have defined $\Bun_G^{\on{I}}$, we define
$$
\Bun_P^{\on{I}} := \Bun_P \underset{\on{pt}/G}{\times} \on{pt}/B
$$
and similarly we denote
$$
\Bun_P^{\on{I}, (S)} := \Bun_{P}^{(S)} \underset{\on{pt}/G}{\times} \on{pt}/B.
$$
Observe that we have a map
$$
\Bun_P^{\on{I}, (S)} \to \Bun_G^{\on{I}}
$$
obtained by base change from $\Bun_P^{(S)} \to \Bun_G$. This map induces an isomorphism 
$$
\Bun_P^{\on{I}, (S)} \iso \Bun_G^{\on{I}, (S)}.
$$

\sssec{}
The stack
$$
\on{pt}/P \underset{\on{pt}/G}{\times} \on{pt}/B = P \backslash G/B
$$
is stratified by locally closed substacks
$$
P \backslash PwB /B \subset P \backslash G/B
$$
where $w$ ranges over $W_M \backslash W$. The map
$$
\Bun_P^{\on{I}, (S)} \to \Bun_P^{\on{I}} \to \on{pt}/P \underset{\on{pt}/G}{\times} \on{pt}/B
$$
allows us to define

\begin{defn}
The \emph{piece of $\Bun_P^{\on{I}, (S)}$ having relative position $w$} is given by the fiber product
$$
\prescript{w}{}{\Bun_P^{\on{I}, (S)}} = \Bun_P^{\on{I}, (S)} \underset{P \backslash G/B}{\times} P \backslash PwB /B.
$$
Using the isomorphism $\Bun_P^{\on{I},(S)} \iso \Bun_G^{\on{I}, (S)}$, we also define $\prescript{w}{}{\Bun_G^{\on{I},(S)}}$ as the image of $\prescript{w}{}{\Bun_P^{\on{I}, (S)}}$.
\end{defn}

\sssec{}
Since finite unions of truncative locally closed substacks are truncative we have reduced Proposition \ref{first reduction} to

\begin{prop} \label{second reduction}
The locally closed substack
$$
\prescript{w}{}{\Bun_G^{\on{I},(S)}} \to \Bun_G^{\on{I}}
$$
is contractive, and therefore truncative.
\end{prop}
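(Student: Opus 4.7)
The plan is to adapt the strategy of \propref{contractiveness of Bun_M} to the Iwahori ramified setting, enhanced with the relative position stratification. I will construct a commutative square analogous to the one in that proposition and verify the four conditions (i)--(iv) of the definition of contractive substack, so that the bottom-left edge of the square identifies with the inclusion $\prescript{w}{}{\Bun_G^{\on{I},(S)}} \hookrightarrow \Bun_G^{\on{I}}$ that I wish to prove contractive.

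Concretely, I would first define $\prescript{w'}{}{\Bun_{P^-}^{\on{I},(S)}}$ to be the locally closed substack of $\Bun_{P^-}^{(S)} \underset{\on{pt}/G}\times \on{pt}/B$ cut out by the condition that the Iwahori structure have a specified relative position $w'$ with respect to the $P^-$-reduction at $x$; here $w' \in W_M \backslash W$ is determined by $w$ (in the $\on{SL}_2$ case of \secref{SL2 Iwahori}, the correspondence is $w \leftrightarrow w_0 w$). Using a choice of representative of $w$ in $W$, one constructs natural Iwahori-enhanced lifts $\iota_P^{(S),w}$ and $\iota_{P^-}^{(S),w}$ of the unramified maps $\iota_P^{(S)}$ and $\iota_{P^-}^{(S)}$: one extends the given $M$-bundle to a $P$- or $P^-$-bundle and equips the result at $x$ with the Iwahori structure obtained by applying the chosen representative of $w$ to the canonical ($w = e$) Iwahori coming from $M \cap B$. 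This yields the diagram
\[
\begin{tikzcd}
    \Bun_M^{(S)} \arrow[r, "\iota_{P^-}^{(S),w}"] \arrow[d, "\iota_P^{(S),w}"'] & \prescript{w'}{}{\Bun_{P^-}^{\on{I},(S)}} \arrow[d, "\sfp_{P^-}^{(S),w}"] \\
    \prescript{w}{}{\Bun_P^{\on{I},(S)}} \arrow[r, "\sfp_P^{(S),w}"'] & \Bun_G^{\on{I}}.
\end{tikzcd}
\]

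Next I would verify conditions (i)--(iv). The open embedding condition (ii) and the smoothness of $\sfp_{P^-}^{(S),w}$ (iv) both follow by pullback from \propref{contractiveness of Bun_M}; the smoothness computation now produces an extra cohomological contribution twisted by $(-x)$ because of the Iwahori datum at $x$, which is precisely why the numerical threshold in the definition of Iwahori-contractive $P$-admissibility strengthens from $>2g-2$ to $>2g-1$. Condition (iii), the surjectivity of $\iota_P^{(S),w}$, generalizes the explicit automorphism-lifting argument in the $\on{SL}_2$ case in \secref{SL2 Iwahori}: given an arbitrary Iwahori structure in the $w$-stratum above a given $M$-bundle, one produces a global automorphism of the underlying $G$-bundle whose restriction to $x$ carries that Iwahori into standard $w$-position, using the $>2g-1$ bound to guarantee nonvanishing global sections of the relevant summand of the adjoint bundle at $x$. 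For condition (i), I would lift the $\BA^1$-action of \cite[11.2]{DG2} on $\Bun_{P^-} \to \Bun_M$ to an action on $\prescript{w'}{}{\Bun_{P^-}^{\on{I},(S)}} \to \Bun_M^{(S)}$ that preserves the relative position stratum, and then invoke \propref{contraction}.

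The main obstacle I expect is condition (iii). In the unramified setting the analogous surjectivity is essentially a consequence of $H^1$-vanishing, whereas in the Iwahori setting one must construct explicit global automorphisms acting on the fiber at $x$, generalizing the $\on{SL}_2$ computation to arbitrary $G$, $P$, and $w \in W_M \backslash W$; this is exactly what the tighter numerical bound in the definition of Iwahori-contractive $P$-admissibility is designed to enable. A secondary technical point is to check that the $\BA^1$-action preserves the relative position stratum, which should reduce to a local computation at $x$ using the explicit form of the action on the unipotent radical of $P^-$, but requires care.
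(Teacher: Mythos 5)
Your overall architecture (a four-corner square indexed by relative position, with the four conditions of contractiveness verified separately, and the $\BA^1$-action lifted from \cite[11.2]{DG2}) matches the paper's, but there is a genuine gap in the choice of the contracting locus. You take the upper-left corner of the square to be $\Bun_M^{(S)}$, equipped with the \emph{canonical} Iwahori structure $w\cdot(\text{the }\underline{B}=B\cap M\text{ reduction})$ at $x$. The correct corner is $\Bun_M^{\on{I}_M,(S)} := \Bun_M^{(S)}\underset{\on{pt}/M}{\times}\on{pt}/\underline{B}$, i.e.\ $M$-bundles carrying an \emph{arbitrary} $\underline{B}$-reduction at $x$, mapped to the $P$- and $P^-$-sides via $\on{pt}/\underline{B}\xrightarrow{\ w\ }\on{pt}/B$ using the shortest-length lift of $w$. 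With your choice, condition (ii) fails: the fiber product $\prescript{w}{}{\Bun_P^{\on{I},(S)}}\underset{\Bun_G^{\on{I}}}{\times}\prescript{w}{}{\Bun_{P^-}^{\on{I},(S)}}$ is computed (via $M\backslash(PwB\cap P^-wB)/B = M\backslash MwB/B\cong\on{pt}/\underline{B}$) to be an open substack of $\Bun_M^{(S)}\underset{\on{pt}/M}{\times}\on{pt}/\underline{B}$, which is an $M/\underline{B}$-fibration over $\Bun_M^{(S)}$; so $\Bun_M^{(S)}$ with a fixed Iwahori is only a closed substack of it, not open. Condition (iii) fails for the same reason: the global-automorphism argument (lifting $U(P)$ at $x$ to $\on{Aut}(\F_G)$ via global generation of $\fn(P)_{\F_M}$) only moves an arbitrary position-$w$ Iwahori into $(MwB)_{x^*\F_M}$, i.e.\ reduces it to an arbitrary $\underline{B}$-reduction of $x^*\F_M$; pushing it further to the single canonical reduction would require $\on{Aut}(\F_M)$ to act transitively on the flag variety $(M/\underline{B})_{x^*\F_M}$, which is false once $M\neq T$. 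Finally, for condition (i) the $\BA^1$-contraction retracts the Iwahori datum onto a $\underline{B}$-reduction of the $M$-bundle, so the retraction $\pi$ has no target unless the corner is $\Bun_M^{\on{I}_M,(S)}$. (When $P=B$, so $M=T$ and $\underline{B}=T$, your version coincides with the correct one, which is why the $\on{SL}_2$ computation does not reveal the issue.)

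A secondary inaccuracy: smoothness of $\prescript{w}{}{\Bun_{P^-}^{\on{I},(S)}}\to\Bun_G^{\on{I}}$ does not ``follow by pullback'' from the unramified statement, because the relative-position stratum is only locally closed in $\Bun_{P^-}^{\on{I},(S)}$ (open only for the big cell), so the map is not a base change of $\Bun_{P^-}^{(S)}\to\Bun_G$ followed by an open embedding. One needs a direct tangent-complex computation, whose surjectivity step is exactly global generation of $(\fg/\fp^-)_{\F_{P^-}}$ — this is where the strengthened bound $>2g-1$ enters, consistent with your $(-x)$-twist heuristic. Your treatment of the $\BA^1$-action is otherwise in the right spirit: the conjugation action by $\mu(t)$ preserves $B^w\cap P^-$, giving a compatible action on $\on{pt}/(B^w\cap P^-)$ over $\on{pt}/\underline{B}$, and the fiber-product action satisfies the hypotheses of \propref{contraction}.
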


\ssec{Outline} \label{outline}

\sssec{}
The rest of this section will be dedicated to proving Proposition \ref{second reduction}, i.e. that for a Iwahori-contractively $P$-admissible subset $S$, $\prescript{w}{}{\Bun_G^{\on{I},(S)}} \to \Bun_G^{\on{I}}$ is contractive.

\sssec{}
We will denote $\underline{B} = B \cap M$, and define
$$
\Bun_M^{\on{I}_M} := \Bun_M \underset{\on{pt}/M}{\times} \on{pt}/\underline{B},
$$
and similarly
$$
\Bun_M^{\on{I}_M, (S)} := \Bun_M^{(S)} \underset{\on{pt}/M}{\times} \on{pt}/\underline{B}.
$$

\sssec{}
For $w \in W_M \backslash W$, there is a unique shortest length lift in $W$, which we will abusively also denote by $w$ from now on. For such $w$, the image of the group homomorphism $\underline{B} \to G$ sending $b \mapsto wbw^{-1}$ is contained in $B$, so we obtain a map of stacks
$$
\on{pt}/\underline{B} \overset{w}{\to} \on{pt}/B.
$$

\sssec{}
The diagram
$$
\begin{tikzcd}
	&& {\on{pt}/\underline{B}} \\
	& {\on{pt}/M} && {\on{pt}/B} \\
	{\on{pt}/P} \\
	& {\on{pt}/G}
	\arrow[from=2-4, to=4-2]
	\arrow[from=1-3, to=2-2]
	\arrow[from=2-2, to=3-1]
	\arrow[from=3-1, to=4-2]
	\arrow["w", from=1-3, to=2-4]
\end{tikzcd}
$$
commutes and
$$
\on{pt}/\underline{B} \to \on{pt}/P \underset{\on{pt}/G}{\times} \on{pt}/B
$$
factors through the substack $P \backslash PwB/B \subset P\backslash G/B$. So the diagram
$$
\begin{tikzcd}
	&& {\Bun_M^{\on{I}_M,(S)}} \\
	& {\Bun_M^{(S)}} && {\on{pt}/\underline{B}} \\
	{\Bun_P^{(S)}} && {\on{pt}/M} && {\on{pt}/B} \\
	& {\on{pt}/P} \\
	&& {\on{pt}/G}
	\arrow[from=3-5, to=5-3]
	\arrow[from=2-4, to=3-3]
	\arrow[from=3-3, to=4-2]
	\arrow[from=4-2, to=5-3]
	\arrow["w", from=2-4, to=3-5]
	\arrow[from=3-1, to=4-2]
	\arrow[from=2-2, to=3-1]
	\arrow[from=2-2, to=3-3]
	\arrow[from=1-3, to=2-2]
	\arrow[from=1-3, to=2-4]
\end{tikzcd}
$$
induces a map
$$
\Bun_M^{\on{I}_M,(S)} \to \Bun_P^{\on{I},(S)}
$$
that factors through $\prescript{w}{}{\Bun_P^{\on{I}, (S)}} \subset \Bun_P^{\on{I}, (S)}$.

\sssec{}
Entirely analogously, the diagram
$$
\begin{tikzcd}
	&& {\on{pt}/\underline{B}} \\
	& {\on{pt}/M} && {\on{pt}/B} \\
	{\on{pt}/P^-} \\
	& {\on{pt}/G}
	\arrow[from=2-4, to=4-2]
	\arrow[from=1-3, to=2-2]
	\arrow[from=2-2, to=3-1]
	\arrow[from=3-1, to=4-2]
	\arrow["w", from=1-3, to=2-4]
\end{tikzcd}
$$
commutes and
$$
\on{pt}/\underline{B} \to \on{pt}/P^- \underset{\on{pt}/G}{\times} \on{pt}/B
$$
factors through the substack $P^- \backslash P^-wB/B \subset P^-\backslash G/B$. So the diagram
$$
\begin{tikzcd}
	&& {\Bun_M^{\on{I}_M,(S)}} \\
	& {\Bun_M^{(S)}} && {\on{pt}/\underline{B}} \\
	{\Bun_{P^-}^{(S)}} && {\on{pt}/M} && {\on{pt}/B} \\
	& {\on{pt}/P^-} \\
	&& {\on{pt}/G}
	\arrow[from=3-5, to=5-3]
	\arrow[from=2-4, to=3-3]
	\arrow[from=3-3, to=4-2]
	\arrow[from=4-2, to=5-3]
	\arrow["w", from=2-4, to=3-5]
	\arrow[from=3-1, to=4-2]
	\arrow[from=2-2, to=3-1]
	\arrow[from=2-2, to=3-3]
	\arrow[from=1-3, to=2-2]
	\arrow[from=1-3, to=2-4]
\end{tikzcd}
$$
induces a map
$$
\Bun_M^{\on{I}_M, (S)} \to \Bun_{P^-}^{\on{I},(S)}
$$
that factors through $\prescript{w}{}{\Bun_{P^-}^{\on{I}, (S)}} \subset \Bun_{P^-}^{\on{I}, (S)}$.

\sssec{}
We thus have a commutative diagram
$$
\begin{tikzcd}
	{\Bun_M^{\on{I}_M,(S)}} && {\prescript{w}{}{\Bun_{P^-}^{\on{I},(S)}}} \\
	\\
	{\prescript{w}{}{\Bun_P^{\on{I}, (S)}}} && {\Bun_G^{\on{I}}}
	\arrow[from=1-1, to=3-1]
	\arrow[from=1-1, to=1-3]
	\arrow[from=1-3, to=3-3]
	\arrow[from=3-1, to=3-3]
\end{tikzcd}
$$
and in light of \ref{locality of contractiveness} we have reduced Proposition \ref{second reduction} to showing
\begin{prop} \label{third reduction}
In the diagram above,
    \begin{enumerate}
        \item [(a)] $\Bun_M^{\on{I}_M,(S)} \to \prescript{w}{}{\Bun_P^{\on{I}, (S)}} \underset{\Bun_G^{\on{I}}}{\times} {\prescript{w}{}{\Bun_{P^-}^{\on{I},(S)}}}$ is an open embedding,
        \item [(b)] $\Bun_M^{\on{I}_M,(S)} \to \prescript{w}{}{\Bun_P^{\on{I}, (S)}}$ is surjective,
        \item [(c)] ${\prescript{w}{}{\Bun_{P^-}^{\on{I},(S)}}} \to \Bun_G^{\on{I}}$ is smooth, and
        \item [(d)] $\Bun_M^{\on{I}_M,(S)} \to \prescript{w}{}{\Bun_{P^-}^{\on{I}, (S)}}$ is contractive.
    \end{enumerate}
\end{prop}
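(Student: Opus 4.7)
The plan is to mirror the strategy of Proposition \ref{contractiveness of Bun_M} in the Iwahori-enhanced setting, establishing each of (a)--(d) by combining the corresponding unramified statement with an analysis of the Bruhat cells $P^{\pm} \backslash P^{\pm} w B / B$. All four vertices of the diagram are obtained from their unramified counterparts by base change along $\Bun_G^{\on{I}} = \Bun_G \underset{\on{pt}/G}{\times} \on{pt}/B$, and the relative position $w$ carves out the locally closed stratum indexed by the shortest-length lift of $w \in W_M \backslash W$.

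Items (a), (b), (c) should be manageable. For (c), smoothness of $\prescript{w}{}{\Bun_{P^-}^{\on{I},(S)}} \to \Bun_G^{\on{I}}$: I would compute the tangent complex at a field-valued point directly. Using smoothness of $\Bun_{P^-}^{(S)} \to \Bun_G$ from Proposition \ref{contractiveness of Bun_M}(c) and tracking the additional contribution coming from the Iwahori line at $x$, surjectivity of the differential reduces to vanishing of $H^1(X, \L_{\alpha}(-x))$ for line bundles $\L_{\alpha}$ on $X$ indexed by roots $\alpha \notin \Gamma_M$; this is exactly what Iwahori-contractive $P$-admissibility ($\langle \lambda, \check\alpha_i \rangle > 2g - 1$) guarantees, since it forces $\deg \L_{\alpha}(-x) > 2g - 2$. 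For (b), surjectivity of $\Bun_M^{\on{I}_M,(S)} \to \prescript{w}{}{\Bun_P^{\on{I},(S)}}$ follows from Proposition \ref{contractiveness of Bun_M}(b) together with surjectivity of $\on{pt}/\underline{B} \to P \backslash PwB/B \cong \on{pt}/(P \cap wBw^{-1})$, which in turn uses the inclusion $w \underline{B} w^{-1} \subset P \cap wBw^{-1}$ coming from the shortest-length choice of $w$. For (a), I combine Proposition \ref{contractiveness of Bun_M}(a) with the Bruhat-cell description identifying $\on{pt}/\underline{B}$ with (an open substack of) the intersection of the $P$- and $P^-$-strata at relative position $w$.

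The main obstacle is (d), contractiveness of $\Bun_M^{\on{I}_M,(S)} \to \prescript{w}{}{\Bun_{P^-}^{\on{I},(S)}}$. I would adapt the $\BA^1$-action of \cite[11.2]{DG2} on $\Bun_{P^-} \to \Bun_M$, which scales the unipotent radical $U_{P^-}$, to the Iwahori-enhanced setting. The key new input is that the action lifts compatibly with the Iwahori level structure and preserves the relative-position stratum $\prescript{w}{}{\Bun_{P^-}^{\on{I},(S)}}$; this holds because the $B$-reduction at $x$ is obtained from the $\underline{B}$-reduction via $w$-conjugation, while the $\BA^1$-scaling of $U_{P^-}$ acts trivially on the $M$-component and hence fixes $w \underline{B} w^{-1} \subset B$. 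With the action in hand, I would invoke Proposition \ref{contraction}: hypothesis (ii) (triviality of the $\BG_m$-action over a point) is inherited from the unramified case, while hypothesis (i) (existence of the section for the $0$-action) follows by arguments parallel to the $\on{SL}_2$ computation of Subsection \ref{SL2 Iwahori}, provided that $H^1(X, \L(-x)) = 0$ for the line bundles $\L$ built from the weights of $M$ on $\Lie(U_{P^-})$. This $(-x)$-twist is precisely the reason the strengthened bound $\langle \lambda, \check\alpha_i \rangle > 2g - 1$ was imposed in the definition of Iwahori-contractive $P$-admissibility, in place of the unramified $2g - 2$.
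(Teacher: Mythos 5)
Parts (a), (c), and (d) of your plan track the paper's argument. For (a) the content is the identification $\on{pt}/\underline{B}=M\backslash(PwB\cap P^-wB)/B=M\backslash MwB/B$, which base-changes the unramified statement. For (c) the tangent-complex computation is right, and the $(-x)$-twist (equivalently, global generation of $(\fg/\fp^-)_{\F_{P^-}}$) is indeed one place where the strengthened bound enters. For (d) the $\BA^1$-action of \cite[11.2]{DG2} does extend to $\prescript{w}{}{\Bun_{P^-}^{\on{I},(S)}}=\Bun_{P^-}^{(S)}\times_{\on{pt}/P^-}\on{pt}/(B^w\cap P^-)$ as you say; note, however, that no cohomological vanishing is needed there at all --- the section and the $0$-action are induced by the group homomorphisms $M\to P^-$ and $\underline{B}\to B^w\cap P^-$, so your appeal to $H^1(X,\L(-x))=0$ in (d) is spurious.

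The genuine gap is in (b). Surjectivity of $\Bun_M^{(S)}\to\Bun_P^{(S)}$ together with surjectivity of $\on{pt}/\underline{B}\to P\backslash PwB/B$ does not formally yield surjectivity of $\Bun_M^{\on{I}_M,(S)}\to\prescript{w}{}{\Bun_P^{\on{I},(S)}}$, because the relevant square is not Cartesian: the fiber product $\Bun_M^{(S)}\times_{\Bun_P^{(S)}}\prescript{w}{}{\Bun_P^{\on{I},(S)}}$ equals $\Bun_M^{(S)}\times_{\on{pt}/M}(M\backslash PwB/B)$, of which $\Bun_M^{\on{I}_M,(S)}=\Bun_M^{(S)}\times_{\on{pt}/M}(M\backslash MwB/B)$ is in general only a proper locally closed piece. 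Concretely, after choosing a global splitting $\F_M$ of a given $\F_P$, the $B$-structure at $x$ sits in position $PwB=U(P)\cdot MwB$ relative to $x^*\F_M$ but need not sit in $MwB$; for a fixed splitting the induced map on fibers is $MwB/B\hookrightarrow PwB/B$, which is not surjective (already for $G=\on{SL}_2$, $P=B$ and $w$ the nontrivial element it is $\on{pt}\hookrightarrow\BA^1$). One must therefore correct the splitting by a global automorphism of $\F_G$ whose value at $x$ is a prescribed element $u\in U(P)_{x^*\F_M}$, and the existence of such an automorphism is exactly the surjectivity of $\on{H}^0(X,U(P)_{\F_M})\to U(P)_{x^*\F_M}$, i.e.\ global generation of $\fn(P)_{\F_M}$ (Proposition \ref{global generation} and Corollary \ref{global generation surj}). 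This --- and not only (c) --- is where the bound $\langle\lambda,\check\alpha_i\rangle>2g-1$ is used; it is the same mechanism as the construction of the automorphism $\Phi$ from a global section of $\L^{\otimes 2}$ with prescribed value at $x$ in Subsection \ref{SL2 Iwahori}. Your proposal as written omits this ingredient entirely.
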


\sssec{}
We will prove Part (a) of Proposition \ref{third reduction} in Subsection \ref{open embedding}, Part (b) in Subsection \ref{surjective}, Part (c) in Subsection \ref{smooth}, and Part (d) in Subsection \ref{contractive}. 

\ssec{Global Generation of $\fn(P)_{\F_M}$} \label{global generation subsection}

\sssec{}
In this subsection, we will show that $\fn(P)_{\F_M}$ is globally generated given some conditions on $\F_M \in \Bun_M$. First let us review the notion of global generation.

\begin{defn}
    A vector bundle $\E$ on $X$ is said to be \emph{globally generated} if
    $$
      \on{H}^0(X, \E) \to \E_x
    $$
    is surjective for all $x \in X$.
\end{defn}

\sssec{}
Here are some useful facts for checking that a vector bundle on a curve is globally generated.

\begin{lem} \label{sufficient}
    The vector bundle $\E$ is globally generated if and only if $\on{H}^1(X, \E(-x)) = 0$ for all $x \in X$.
\end{lem}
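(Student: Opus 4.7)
The plan is to analyze the standard short exact sequence of sheaves on $X$
$$0 \to \E(-x) \to \E \to \E_x \to 0,$$
obtained by tensoring $0 \to \O_X(-x) \to \O_X \to k(x) \to 0$ by $\E$, where $\E_x$ denotes the skyscraper sheaf at $x$ whose value is the fiber of $\E$ there. Since $X$ is a smooth projective curve, higher cohomology vanishes above degree $1$, so the associated long exact sequence in cohomology truncates to
$$0 \to \on{H}^0(X, \E(-x)) \to \on{H}^0(X, \E) \xrightarrow{\on{ev}_x} \E_x \xrightarrow{\delta_x} \on{H}^1(X, \E(-x)) \to \on{H}^1(X, \E) \to 0.$$

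For the ``if'' direction, suppose $\on{H}^1(X, \E(-x)) = 0$ for every $x \in X$. Then by exactness at $\E_x$ in the sequence above, the evaluation map $\on{ev}_x : \on{H}^0(X, \E) \to \E_x$ is surjective at every point $x \in X$, which is exactly the definition of global generation. This is the direction that will actually be used in the subsequent subsections, where one wishes to verify global generation of bundles of the form $\fn(P)_{\F_M}$ from positivity bounds on $\F_M$ coming from the Iwahori-contractive $P$-admissibility of $S$.

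For the converse, by exactness the global generation of $\E$ is equivalent to the vanishing of the connecting map $\delta_x$ for every $x$, which in turn is equivalent to the injectivity of the restriction map $\on{H}^1(X, \E(-x)) \to \on{H}^1(X, \E)$. The main obstacle for the stated ``only if'' is that injectivity alone does not force $\on{H}^1(X, \E(-x))$ to vanish; one genuinely needs the additional input $\on{H}^1(X, \E) = 0$. So the converse either requires an auxiliary vanishing hypothesis on $\E$, or should be read in a setting where that vanishing is automatic (as will be the case in the applications to $\fn(P)_{\F_M}$, where the positivity on $\F_M$ kills $\on{H}^1(X, \fn(P)_{\F_M})$ as well); in any case, only the ``if'' direction plays an essential role in what follows.
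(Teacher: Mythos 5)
The paper states this lemma without proof, and your argument via the long exact sequence attached to $0 \to \E(-x) \to \E \to \E_x \to 0$ is the standard (and surely intended) one; the ``if'' direction, which is the only direction used later (in Lemma~\ref{assoc graded} and in the proof of Lemma~\ref{numerology}), is proved correctly. Your reservation about the ``only if'' direction is also well founded: as literally stated, that direction is false. For example, $\E = \O_X$ on a curve of genus $g \geq 1$ is globally generated, yet $\on{H}^1(X, \O_X(-x)) \cong \on{H}^0(X, \omega_X(x))^*$ has dimension $g \neq 0$. Your exact sequence shows that the correct biconditional is: $\on{H}^1(X, \E(-x)) = 0$ for all $x$ if and only if $\E$ is globally generated \emph{and} $\on{H}^1(X, \E) = 0$; note that the second vanishing need not be added as a hypothesis on the right-to-left implication, since $\on{H}^1(X, \E)$ is a quotient of $\on{H}^1(X, \E(-x))$. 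In the applications (Proposition~\ref{global generation} and its corollaries) the vanishing of $\on{H}^1(X, \E)$ is always established alongside global generation, so nothing downstream of the lemma is affected by this correction.
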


\begin{lem} \label{first cohomology}
    Let $\E$ have some filtration
    $$
    0 = \E_0 \subset \E_1 \subset ... \subset \E_m = \E
    $$
    whose subquotients $\E_{i}/\E_{i-1}$ are vector bundles, and let 
    $$
    \on{gr}(\E) = \bigoplus \E_{i}/\E_{i-1}
    $$
    be the associated graded of $\E$ with respect to that filtration. If $\on{H}^1(X, \on{gr}(\E)) = 0$, then $\on{H}^1(X, \E) = 0$.
\end{lem}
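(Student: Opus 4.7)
The plan is to induct on the length $m$ of the filtration. The base case $m=1$ is trivial, since then $\mathcal{E} = \mathcal{E}_1 = \on{gr}(\mathcal{E})$ and the hypothesis is literally the conclusion.

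For the inductive step, I would consider the short exact sequence of vector bundles
$$
0 \to \mathcal{E}_{m-1} \to \mathcal{E}_m \to \mathcal{E}_m/\mathcal{E}_{m-1} \to 0
$$
and take the associated long exact sequence in sheaf cohomology. Since $X$ is a smooth projective curve, $H^i(X, -)$ vanishes for $i \geq 2$, so the relevant piece is
$$
\ldots \to H^1(X, \mathcal{E}_{m-1}) \to H^1(X, \mathcal{E}_m) \to H^1(X, \mathcal{E}_m/\mathcal{E}_{m-1}) \to 0.
$$
By hypothesis, $H^1(X, \on{gr}(\mathcal{E})) = \bigoplus_{i=1}^m H^1(X, \mathcal{E}_i/\mathcal{E}_{i-1}) = 0$, so each direct summand vanishes; in particular $H^1(X, \mathcal{E}_m/\mathcal{E}_{m-1}) = 0$.

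The induced filtration $0 = \mathcal{E}_0 \subset \mathcal{E}_1 \subset \ldots \subset \mathcal{E}_{m-1}$ on $\mathcal{E}_{m-1}$ has associated graded $\bigoplus_{i=1}^{m-1} \mathcal{E}_i/\mathcal{E}_{i-1}$, which is a direct summand of $\on{gr}(\mathcal{E})$, hence also has vanishing $H^1$. By the inductive hypothesis $H^1(X, \mathcal{E}_{m-1}) = 0$, and combining this with the above long exact sequence yields $H^1(X, \mathcal{E}_m) = 0$, as desired.

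There is no main obstacle here: the proof is a textbook induction on filtration length using the long exact sequence and the fact that higher cohomology on a curve vanishes above degree one.
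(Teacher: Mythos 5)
Your proof is correct; the paper states this lemma without proof as a standard fact, and your induction on the filtration length via the long exact sequence in cohomology is exactly the expected argument. (A minor remark: you do not even need $H^2(X,-)=0$, since exactness at $H^1(X,\E_m)$ already forces $H^1(X,\E_m)$ to be the image of $H^1(X,\E_{m-1})=0$ once $H^1(X,\E_m/\E_{m-1})=0$.)
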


\sssec{}
The following now follows immediately from the preceding two lemmas.

\begin{lem} \label{assoc graded}
    Let $\E$ be as above. If $\on{H}^1(X, \on{gr}(\E)) = 0$ and $\on{gr}(\E)$ is globally generated, then $\E$ is globally generated.
\end{lem}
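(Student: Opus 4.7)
The plan is to chain the two preceding lemmas together after applying them to the twisted bundle $\E(-x)$. By Lemma \ref{sufficient}, it suffices to prove that $\on{H}^1(X, \E(-x)) = 0$ for every $x \in X$, so the entire game reduces to establishing this $H^1$-vanishing after a twist.

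First I would observe that tensoring the given filtration $0 = \E_0 \subset \E_1 \subset \cdots \subset \E_m = \E$ with the line bundle $\O(-x)$ is exact, producing a filtration of $\E(-x)$ whose subquotients are $(\E_i/\E_{i-1})(-x)$. Consequently $\on{gr}(\E(-x)) = \on{gr}(\E)(-x)$ as vector bundles, and this is the key compatibility that lets us transport the hypothesis.

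Next I would apply Lemma \ref{sufficient} in the direction ``globally generated implies $H^1$-vanishing after a twist by a point'' to the vector bundle $\on{gr}(\E)$: since $\on{gr}(\E)$ is globally generated by hypothesis, we obtain $\on{H}^1(X, \on{gr}(\E)(-x)) = 0$ for every $x \in X$. Then I would feed this into Lemma \ref{first cohomology} applied to the filtered bundle $\E(-x)$, whose associated graded we have just identified with $\on{gr}(\E)(-x)$. The conclusion is $\on{H}^1(X, \E(-x)) = 0$ for every $x \in X$, and the reverse direction of Lemma \ref{sufficient} then gives global generation of $\E$.

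There is really no main obstacle here, since the argument is just the formal composition of the two lemmas with the observation that taking associated graded commutes with twisting by a line bundle. I note in passing that the separate hypothesis $\on{H}^1(X, \on{gr}(\E)) = 0$ is not needed for this argument (it is already implied by global generation in the sense that the slightly stronger statement $\on{H}^1(X, \on{gr}(\E)(-x)) = 0$ holds); the lemma is thus stated with slightly more input than is strictly necessary, which does no harm.
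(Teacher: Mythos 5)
Your overall route is the paper's intended one: reduce via Lemma \ref{sufficient} to showing $\on{H}^1(X,\E(-x))=0$, observe that twisting the filtration by $\O(-x)$ identifies $\on{gr}(\E(-x))$ with $\on{gr}(\E)(-x)$, and apply Lemma \ref{first cohomology}. But the step where you derive $\on{H}^1(X,\on{gr}(\E)(-x))=0$ from global generation of $\on{gr}(\E)$ \emph{alone} is wrong, and so is your parenthetical claim that the hypothesis $\on{H}^1(X,\on{gr}(\E))=0$ is redundant. For any vector bundle $\F$, the exact sequence $0\to\F(-x)\to\F\to\F_x\to 0$ gives
$$
\on{H}^0(X,\F)\to\F_x\to\on{H}^1(X,\F(-x))\to\on{H}^1(X,\F)\to 0,
$$
so surjectivity of $\on{H}^0(X,\F)\to\F_x$ only yields $\on{H}^1(X,\F(-x))\cong\on{H}^1(X,\F)$; it does not kill $\on{H}^1(X,\F(-x))$ unless $\on{H}^1(X,\F)=0$. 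Concretely, $\O_X$ on a curve of genus $g\geq 1$ is globally generated, yet $\on{H}^1(X,\O_X(-x))\cong\on{H}^0(X,\omega_X(x))^*$ has dimension $g\neq 0$. (Accordingly, the ``only if'' direction of Lemma \ref{sufficient} as literally stated is false; only the sufficiency direction is true, and that is the only direction you may lean on.)

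The hypothesis $\on{H}^1(X,\on{gr}(\E))=0$ is therefore genuinely needed, both for your argument and for the truth of the lemma. Without it the statement fails: take $\E$ a nonsplit self-extension $0\to\O_X\to\E\to\O_X\to 0$ on a curve of genus $\geq 1$ (these exist since $\on{Ext}^1(\O_X,\O_X)=\on{H}^1(X,\O_X)\neq 0$); then $\on{gr}(\E)=\O_X\oplus\O_X$ is globally generated, but $\on{H}^0(X,\E)$ is one-dimensional, so $\on{H}^0(X,\E)\to\E_x$ cannot surject onto the two-dimensional fiber. The repair to your proof is one line: applying the displayed exact sequence to $\F=\on{gr}(\E)$, global generation of $\on{gr}(\E)$ \emph{together with} $\on{H}^1(X,\on{gr}(\E))=0$ forces $\on{H}^1(X,\on{gr}(\E)(-x))=0$ for all $x$; the rest of your argument then goes through exactly as written and coincides with the paper's (unwritten but clearly intended) deduction from the two preceding lemmas.
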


\sssec{}
We will spend the rest of the subsection using the above basic facts about global generation to show that $\fn(P)_{\F_M}$ is globally generated.

\begin{lem} \label{numerology}
    Let $\tilde{G}$ be a reductive group, $V$ a finite dimensional $\tilde{G}$-representation on which $\on{Z}_0(\tilde{G})$ acts via $\check\mu$, and $\F_{\tilde{G}} \in \Bun_{\tilde{G}}^{\lambda, ss}$. If
    $$
      \langle \lambda, \check\mu \rangle > 2g-1,
    $$
    then
    $V_{\F_{\tilde{G}}}$ is globally generated.
\end{lem}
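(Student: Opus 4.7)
The plan is to invoke \lemref{sufficient} and reduce the problem to showing that $H^1(X, V_{\F_{\tilde{G}}}(-x)) = 0$ for every $x \in X$.

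To compute the slope of the associated vector bundle $V_{\F_{\tilde{G}}}$, I would observe that the determinant character $\det V : \tilde G \to \BG_m$ is trivial on the commutator subgroup of $\tilde G$ and therefore determined by its restriction to $Z_0(\tilde G)$. By hypothesis this restriction equals $(\dim V)\check\mu$, so $\det V = (\dim V)\check\mu$ as a character of $\tilde G$. Pairing with the invariant $\lambda$ of $\F_{\tilde G}$ yields $\deg V_{\F_{\tilde G}} = (\dim V) \langle \lambda, \check\mu \rangle$, i.e.\ slope $\mu(V_{\F_{\tilde G}}) = \langle \lambda, \check\mu \rangle$.

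Next I would invoke the theorem of Ramanan--Ramanathan: the associated vector bundle of a semistable principal bundle in characteristic zero is again semistable. Hence $V_{\F_{\tilde G}}$ is a semistable vector bundle, and its twist $V_{\F_{\tilde G}}(-x)$ is semistable of slope $\langle \lambda, \check\mu \rangle - 1 > 2g-2$. For any semistable vector bundle $\E$ of slope $> 2g-2$, Serre duality identifies $H^1(X, \E)$ with $H^0(X, \E^\vee \otimes \omega_X)^\vee$, and a nonzero section of the latter would, after saturation, yield a line subbundle of nonnegative degree inside a semistable bundle of slope $2g-2 - \mu(\E) < 0$, a contradiction. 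Applied to $\E = V_{\F_{\tilde G}}(-x)$, this produces the required vanishing and completes the proof via \lemref{sufficient}.

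The main obstacle is the invocation of Ramanan--Ramanathan: the slope computation is routine bookkeeping on characters, and the cohomology vanishing is a well-known consequence of semistability, but the passage from semistability of the principal bundle to semistability of the associated vector bundle is what genuinely does the work here, and relies on the characteristic zero hypothesis implicit in the paper's setup.
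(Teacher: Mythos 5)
Your proof is correct, and its skeleton is the same as the paper's: reduce via \lemref{sufficient} to the vanishing of $\on{H}^1(X, V_{\F_{\tilde G}}(-x))$, apply Serre duality, and kill the dual $\on{H}^0$ by a degree argument. The one genuine difference is the semistability input. You invoke the full Ramanan--Ramanathan theorem (the associated bundle of a semistable principal bundle under a representation sending $\on{Z}_0(\tilde G)$ to the center of $\GL(V)$ is again slope-semistable in characteristic zero), and then rule out sections of the Serre dual by saturating a hypothetical section inside a semistable bundle of negative slope. The paper instead cites \cite[Lemma 10.3.2]{DG2}, which only asserts that every \emph{line subbundle} of $(V^*)_{\F_{\tilde G}}$ has degree $\leq -\langle\lambda,\check\mu\rangle$; this weaker statement (proved directly in \emph{loc.\ cit.}\ from the definition of semistability via parabolic reductions) already suffices, since a nonzero section of $(V^*)_{\F_{\tilde G}}(x)\otimes\omega_X$ would produce a line subbundle of nonnegative degree. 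So your argument uses a strictly stronger black box to reach the same contradiction; the trade-off is that Ramanan--Ramanathan is a standard citable theorem, whereas the paper's route stays entirely within the toolkit of \cite{DG2} and avoids needing semistability of the full associated bundle. Your slope computation via the determinant character is correct but is not actually needed in the paper's version, where only the line-subbundle bound enters.
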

\begin{proof}
    We note that by Serre duality, we have 
    $$
    \on{H}^1(X, V_{\F_{\tilde{G}}}(-x)) = \on{H}^0(X, (V^*)_{\F_{\tilde{G}}}(x) \otimes \omega_X)^{*}.
    $$
    By \cite[Lemma 10.3.2]{DG2}, every line subbundle of $(V^*)_{\F_{\tilde{G}}}$ has degree $\leq - \langle \lambda, \check\mu \rangle < -(2g-1)$, so every line subbundle of $(V^*)_{\F_{\tilde{G}}}(x) \otimes \omega_X$ has degree $< 0$. Thus 
    $$
    \on{H}^0(X, (V^*)_{\F_{\tilde{G}}}(x) \otimes \omega_X) = 0
    $$
    and we are done by Lemma \ref{sufficient}.
\end{proof}

\begin{prop} \label{global generation}
    Let $\F_M \in \Bun_M^{(\lambda)}$ be such that $\langle \lambda, \check\alpha_i \rangle > 2g-1$ whenever $i \notin \Gamma_M$. Then the associated vector bundle $\fn(P)_{\F_M}$ has $\on{H}^1$ equal to $0$ and is globally generated.
\end{prop}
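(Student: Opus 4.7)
The plan is to reduce the proposition to Lemma \ref{numerology} via an appropriate $M$-stable filtration of $\fn(P)$. First, I would decompose $\fn(P)$ as an $M$-module into its $Z_0(M)$-weight spaces,
\[
\fn(P) \;=\; \bigoplus_{\check\mu} \fn(P)_{\check\mu},
\]
which is a decomposition of $M$-modules since $M$ commutes with its connected center. The characters $\check\mu$ that appear are precisely the restrictions to $Z_0(M)$ of those positive roots $\check\alpha$ of $G$ that are not roots of $M$. Expanding $\check\alpha = \sum_{i \in \Gamma_G} n_i \check\alpha_i$ with $n_i \in \BZ_{\geq 0}$, this restriction identifies canonically with $\sum_{i \notin \Gamma_M} n_i \check\alpha_i$ (viewed as a character of $T$ trivial on the simple coroots of $M$), and there exists some $i_0 \notin \Gamma_M$ with $n_{i_0} \geq 1$, since otherwise $\check\alpha$ would itself be a root of $M$. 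I would then refine this direct sum decomposition to an $M$-stable filtration of $\fn(P)$ whose successive quotients are irreducible $M$-representations; on each such quotient, $Z_0(M)$ still acts by a single character $\check\mu$ of the form above.

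By Lemma \ref{assoc graded}, it suffices to prove that for every irreducible subquotient $V$ of this filtration, the induced bundle $V_{\F_M}$ on $X$ is globally generated and satisfies $H^1(X, V_{\F_M}) = 0$. I would apply Lemma \ref{numerology} to $\tilde{G} = M$, the representation $V$, and $\F_M \in \Bun_M^{(\lambda)}$ (viewed as a semistable $M$-bundle of degree $\lambda$, which is legitimate since $\lambda$ is $M$-dominant in the $P$-admissible setting we are working in). The numerical hypothesis $\langle \lambda, \check\mu \rangle > 2g - 1$ is immediate from
\[
\langle \lambda, \check\mu \rangle \;=\; \sum_{i \notin \Gamma_M} n_i \langle \lambda, \check\alpha_i \rangle \;\geq\; \langle \lambda, \check\alpha_{i_0} \rangle \;>\; 2g-1,
\]
using our hypothesis $\langle \lambda, \check\alpha_i \rangle > 2g - 1$ for $i \notin \Gamma_M$ together with the existence of $i_0$ above. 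This yields global generation of $V_{\F_M}$. For the $H^1$ vanishing, note that the proof of Lemma \ref{numerology} in fact establishes the stronger vanishing $H^1(X, V_{\F_M}(-x)) = 0$ for every $x \in X$; combined with global generation at $x$ and the long exact sequence attached to $0 \to V_{\F_M}(-x) \to V_{\F_M} \to (V_{\F_M})_x \to 0$, one then deduces $H^1(X, V_{\F_M}) = 0$.

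The only subtle step is the bookkeeping at the beginning: identifying the $Z_0(M)$-weights on $\fn(P)$ in terms of the $\check\alpha_i$ with $i \notin \Gamma_M$, so that the hypothesis on $\lambda$ translates cleanly into the numerical condition of Lemma \ref{numerology}. Everything else is a direct application of Lemma \ref{numerology} combined with the elementary cohomological Lemmas \ref{sufficient}, \ref{first cohomology}, and \ref{assoc graded}.
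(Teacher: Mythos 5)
There is a genuine gap at the step where you apply Lemma \ref{numerology} with $\tilde{G} = M$ directly to $\F_M$. That lemma requires $\F_{\tilde G} \in \Bun_{\tilde G}^{\lambda, ss}$, i.e.\ a \emph{semistable} bundle (its proof rests on the bound for degrees of line subbundles of associated bundles of semistable bundles). A point of the Harder--Narasimhan stratum $\Bun_M^{(\lambda)}$ is semistable as an $M$-bundle only when $\lambda$ is orthogonal to all the roots of $M$; your justification ``$\lambda$ is $M$-dominant in the $P$-admissible setting'' conflates dominance with semistability. In the $P$-admissible setting $\lambda$ can pair strictly positively with simple roots $\check\alpha_j$, $j \in \Gamma_M$, in which case $\F_M$ has a nontrivial canonical reduction inside $M$ and the hypotheses of Lemma \ref{numerology} simply fail for $(\tilde G, \F_{\tilde G}) = (M, \F_M)$.

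The paper repairs exactly this point: it introduces the parabolic $P_\lambda \subset M$ whose Levi $M_\lambda$ corresponds to $\{i \in \Gamma_M \mid \langle \lambda, \check\alpha_i\rangle = 0\}$, uses the Harder--Narasimhan theory to produce the canonical \emph{semistable} reduction $\F_{P_\lambda}$ of degree $\lambda$ and the induced semistable $M_\lambda$-bundle $\F_{M_\lambda}$, observes that $\fn(P)_{\F_{M_\lambda}}$ is an associated graded of $\fn(P)_{\F_M}$, and only then applies Lemma \ref{numerology} --- over $M_\lambda$, to the weight spaces $V_{M_\lambda, \check\alpha}$ on which $Z_0(M_\lambda)$ acts by the full root $\check\alpha$, with the estimate $\langle \lambda, \check\alpha\rangle \geq \langle\lambda,\check\alpha_i\rangle\cdot\on{coeff}_i(\check\alpha) > 2g-1$ using dominance of $\lambda$. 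Your filtration-plus-Lemma-\ref{assoc graded} skeleton, the weight-space bookkeeping, and the deduction of $\on{H}^1 = 0$ from $\on{H}^1(X, V_{\F}(-x)) = 0$ are all consistent with the paper's argument; what is missing is the passage from $\F_M$ to the semistable $M_\lambda$-bundle, without which the key lemma does not apply.
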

\begin{proof}
    As in the proof of \cite[Proposition 10.4.5]{DG2}, let $P_{\lambda}$ be the parabolic of $M$ corresponding to the subset 
    $$
    \{i \in \Gamma_M \mid \langle \lambda, \check\alpha_i \rangle = 0 \} \subset \Gamma_M
    $$
    and let $M_{\lambda}$ be its levi. Since $\F_M \in \Bun_M^{(\lambda)}$, by \cite[Theorem 7.4.3]{DG2} $\F_M$ has a semistable reduction to $P_{\lambda}$ of degree $\lambda$ which we will denote by $\F_{P_\lambda}$. Denote by $\F_{M_\lambda}$ the induced principal $M_{\lambda}$-bundle along $P_\lambda \to M_\lambda$. Then $\fn(P)_{\F_{M_\lambda}}$ is an associated graded of $\fn(P)_{\F_M} = \fn(P)_{\F_{P_\lambda}}$. It is shown in \emph{loc.cit.} that $\on{H}^1(X, \fn(P)_{\F_{M_\lambda}}) = 0$, so by Lemma \ref{first cohomology}, we conclude that $\on{H}^1(X, \fn(P)_{\F_{M}}) = 0$.
    
    For global generation, by Lemma \ref{assoc graded}, it suffices to show that $\fn(P)_{\F_{M_\lambda}}$ is globally generated. Now observe that, as in \cite[Subsection 10.4.1]{DG2}, $\fn(P)$ can be written as a direct sum of $M_{\lambda}$-representations
    $$
      \bigoplus V_{M_{\lambda}, \check\alpha}
    $$
    where the direct sum is over positive roots $\check\alpha$ of $G$ such that there exists some $i \notin \Gamma_M$ with $\on{coeff}_i(\check\alpha) > 0$. $\on{Z}_0(M_{\lambda})$ acts on $V_{M_{\lambda}, \check\alpha}$ via $\check\alpha$, and since
    $$
      \langle \lambda, \check\alpha \rangle \geq \langle \lambda, \check\alpha_i \rangle \cdot \on{coeff}_i(\check\alpha) \geq \langle \lambda, \check\alpha_i \rangle \cdot 1 > 2g-1,
    $$
    Lemma \ref{numerology} allows us to conclude that ${V_{M_{\lambda}, \check\alpha}}_{\F_{M_{\lambda}}}$ is globally generated.
\end{proof}

\begin{cor} \label{global generation surj}
    Let $\F_M$ be as in Proposition \ref{global generation}. Then the map $\on{H}^0(X, U(P)_{\F_M}) \to U(P)_{x^*\F_M} \iso U(P)$ is surjective.
\end{cor}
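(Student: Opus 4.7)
The plan is to deduce the corollary from the global generation of $\fn(P)_{\F_M}$ established in Proposition \ref{global generation}, by transporting it along the exponential map. In characteristic zero, for any unipotent algebraic group $N$ with Lie algebra $\fn$, the exponential
$$
\exp \colon \fn \iso N
$$
is an $M$-equivariant isomorphism of schemes (though not of groups), where $M$ acts on $\fn(P)$ by the adjoint action and on $U(P)$ by conjugation via the Levi decomposition $P = M \ltimes U(P)$. Applying this to $N = U(P)$, $\fn = \fn(P)$, and passing to associated bundles via the $M$-torsor underlying $\F_M$, we obtain an isomorphism of $X$-schemes
$$
\fn(P)_{\F_M} \iso U(P)_{\F_M}
$$
compatible with pullback along $x \colon \on{pt} \to X$.

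Consequently, there is a commutative square of sets
$$
\begin{tikzcd}
\on{H}^0(X, \fn(P)_{\F_M}) \arrow[r, "\sim"] \arrow[d] & \on{H}^0(X, U(P)_{\F_M}) \arrow[d] \\
\fn(P)_{x^*\F_M} \arrow[r, "\sim"] & U(P)_{x^*\F_M}
\end{tikzcd}
$$
in which the horizontal arrows are bijections induced by $\exp$ and the vertical arrows are evaluation at $x$. By Proposition \ref{global generation}, $\fn(P)_{\F_M}$ is globally generated, i.e.\ the left vertical arrow is surjective; hence the right vertical arrow is surjective as well, which is precisely the statement of the corollary.

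I do not foresee a significant obstacle. The conceptual point is that although $\exp$ fails to be a group homomorphism, the corollary only asserts surjectivity of a map of sets between the space of sections of the unipotent group bundle and its fiber at $x$, and this property is preserved under any $M$-equivariant isomorphism of schemes between $U(P)$ and $\fn(P)$. An alternative route, valid in any characteristic, would induct on the lower central series $U(P) = U_1 \supset U_2 \supset \cdots \supset U_n = \{e\}$: each subquotient $U_i/U_{i+1}$ is a vector $M$-group built out of root spaces of $\fn(P)$, so by the same argument as in the proof of Proposition \ref{global generation} the associated bundle $(U_i/U_{i+1})_{\F_M}$ is globally generated with vanishing $\on{H}^1$, and a non-abelian cohomology chase using these two vanishing facts gives the claim stage by stage.
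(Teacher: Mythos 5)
Your argument is exactly the paper's: the proof given there consists of the single observation that in characteristic zero the exponential provides a bijective $M$-equivariant map $\fn(P) \to U(P)$, which transports global generation of $\fn(P)_{\F_M}$ (Proposition \ref{global generation}) to surjectivity of $\on{H}^0(X, U(P)_{\F_M}) \to U(P)_{x^*\F_M}$. Your write-up just makes the implicit commutative square explicit, and the alternative lower-central-series argument, while plausible, is not needed.
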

\begin{proof}
    In characteristic $0$, there is a bijective $M$-equivariant map $\fn(P) \to U(P)$.
\end{proof}

\begin{cor} \label{global generation smooth}
    Let $\F_{P^-} \in \Bun_{P^-}^{(\lambda)}$ be such that $\langle \lambda, \check\alpha_i \rangle > 2g-1$ whenever $i \notin \Gamma_M$. Then $(\fg/\fp^-)_{\F_{P^-}}$ is globally generated.
\end{cor}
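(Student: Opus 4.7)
The plan is to reduce the claim to Proposition \ref{global generation} by exhibiting a filtration of $\fg/\fp^-$ whose associated graded is $\fn(P)$, and then invoking Lemma \ref{assoc graded}.

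First, I would observe that $\F_{P^-}$ induces an $M$-bundle $\F_M$ via the projection $P^- \to M$, and by the very definition of $\Bun_{P^-}^{(\lambda)}$ as the preimage of $\Bun_M^{(\lambda)}$ under $\Bun_{P^-} \to \Bun_M$, this $\F_M$ lies in $\Bun_M^{(\lambda)}$. Thus the hypothesis of Proposition \ref{global generation} is satisfied for $\F_M$, giving that $\fn(P)_{\F_M}$ is globally generated and has vanishing $\on{H}^1$.

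Next, the key step is to filter $\fg/\fp^-$ as a $P^-$-representation. Pick a sufficiently generic cocharacter $\theta : \BG_m \to Z(M)$ of the center of the Levi, and decompose $\fg = \bigoplus_k \fg_k$ into $\theta$-weight spaces. Then $\fp^- = \bigoplus_{k \leq 0} \fg_k$, and the subspaces $\fg^{\leq k} := \bigoplus_{j \leq k} \fg_j$ are $P^-$-stable because $M$ preserves weight spaces while $\fn(P^-)$ strictly decreases the $\theta$-weight. Passing to the quotient yields a $P^-$-stable filtration of $\fg/\fp^-$ whose associated graded has trivial $U(P^-)$-action, so it is naturally an $M$-representation, and as such it is canonically identified with $\fn(P) = \bigoplus_{k > 0} \fg_k$. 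Twisting by $\F_{P^-}$ therefore gives a filtration of $(\fg/\fp^-)_{\F_{P^-}}$ as a vector bundle on $X$ whose associated graded is $\fn(P)_{\F_M}$.

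Finally, combining the preceding two steps via Lemma \ref{assoc graded} immediately yields that $(\fg/\fp^-)_{\F_{P^-}}$ is globally generated. I do not anticipate any serious obstacle; the only subtle point is verifying that the filtration on $\fg/\fp^-$ is well-defined at the level of $P^-$-representations (so that twisting by the $P^-$-bundle $\F_{P^-}$ makes sense), which is ensured by the observation above about how $\fn(P^-)$ acts on the weight spaces.
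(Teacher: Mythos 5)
Your proposal is correct and follows essentially the same route as the paper: induce $\F_M$ from $\F_{P^-}$, note that $\fn(P)_{\F_M}$ is an associated graded of $(\fg/\fp^-)_{\F_{P^-}}$, and conclude via Proposition \ref{global generation} together with Lemma \ref{assoc graded}. The only difference is that you spell out the construction of the $P^-$-stable filtration via a central cocharacter of $M$, which the paper leaves implicit.
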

\begin{proof}
    Let $\F_M$ be the induced principal $M$-bundle along the map $P^- \to M$. Then $(\fg/\fp^-)_{\F_{M}} = \fn(P)_{\F_M}$ is an associated graded of $(\fg/\fp^-)_{\F_{P^-}}$. Furthermore, $\on{H}^1(X, \fn(P)_{\F_M}) = 0$, so the result follows from Lemma \ref{assoc graded}.
\end{proof}

\ssec{The Map $\Bun_M^{\on{I_M}, (S)} \to \prescript{w}{}{\Bun_P^{\on{I}, (S)}} \underset{\Bun_G^{\on{I}}}{\times} \prescript{w}{}{\Bun_{P^-}^{\on{I}, (S)}}$ is an Open Embedding} \label{open embedding}

\sssec{}
In this subsection, we will prove Part (a) of Proposition \ref{third reduction}, i.e. that 
$$
\Bun_M^{\on{I}, (S)} \to \prescript{w}{}{\Bun_P^{\on{I}, (S)}} \underset{\Bun_G^{\on{I}}}{\times} {\prescript{w}{}{\Bun_{P^-}^{\on{I},(S)}}}
$$
is an open embedding.

\sssec{}
This will immediately follow from Part (a) of Proposition \ref{contractiveness of Bun_M}, which is the fact that 
$$
\Bun_M^{(S)} \to \Bun_P^{(S)} \underset{\Bun_G}{\times} \Bun_{P^-}^{(S)}
$$
is an open embedding, combined with the following

\begin{prop} \label{fiber product}
The map
$$
    \Bun_M^{\on{I}, (S)} \to \Bun_M^{(S)} \underset{\Bun_P^{(S)} \underset{\Bun_G}{\times} \Bun_{P^-}^{(S)}}{\times} (\prescript{w}{}{\Bun_P^{\on{I}, (S)}} \underset{\Bun_G^{\on{I}}}{\times} {\prescript{w}{}{\Bun_{P^-}^{\on{I},(S)}}})
$$
is an isomorphism.
\end{prop}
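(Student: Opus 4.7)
The plan is to verify the map is an isomorphism by reducing to a pointwise statement at $x$. Applying the definitions
$$
\Bun_M^{\on{I}_M, (S)} = \Bun_M^{(S)} \underset{\on{pt}/M}{\times} \on{pt}/\underline{B}, \qquad \prescript{w}{}{\Bun_P^{\on{I}, (S)}} = \Bun_P^{(S)} \underset{\on{pt}/P}{\times} (P \backslash PwB/B),
$$
and the analogue for $P^-$, together with the identification $\on{pt}/M \iso \on{pt}/P \underset{\on{pt}/G}{\times} \on{pt}/P^-$ (which holds since $P \cap P^- = M$), one rewrites the target of the map as $\Bun_M^{(S)} \underset{\on{pt}/M}{\times} Y$, where
$$
Y := (P \backslash PwB/B) \underset{\on{pt}/B}{\times} (P^- \backslash P^-wB/B).
$$
The proposition therefore reduces to showing that the natural map $\on{pt}/\underline{B} \to Y$ of stacks over $\on{pt}/M$ is an isomorphism.

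I would verify this pointwise assertion by pulling back along the smooth cover $\on{pt} \to \on{pt}/M$ given by the trivial $M$-torsor. The source becomes $M/\underline{B}$, the target becomes the intersection $PwB/B \cap P^-wB/B$ inside $G/B$, and the induced map sends $m\underline{B} \mapsto mwB$. Well-definedness holds because $\underline{B} \subset wBw^{-1}$, a consequence of $w$ being the shortest-length lift of its class in $W_M \backslash W$ (such a $w$ maps positive roots of $M$ to positive roots of $G$). Injectivity reduces to the identity $M \cap wBw^{-1} = \underline{B}$, which follows from the same shortest-length property.

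The main obstacle is surjectivity, namely showing $PwB/B \cap P^-wB/B = MwB/B$ inside $G/B$. I would argue this by combining a dimension count with an irreducibility argument: standard transversality properties of the Schubert cells for $P$ and $P^-$ give that the intersection has expected dimension equal to the number of positive roots of $M$, matching $\dim M/\underline{B}$. Since $M/\underline{B}$ is a proper irreducible subvariety of the intersection of exactly this dimension (embedded as a closed subvariety via $m \mapsto mwB$), equality follows once one establishes that the intersection itself is irreducible. This final step would be verified by a direct analysis using the decomposition $P^- = U^-(P) \cdot M$ and the uniqueness of the open-cell factorization $U^-(P) \times P \hookrightarrow G$.
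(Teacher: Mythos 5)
Your reduction is in substance the same as the paper's: unwind both sides as fiber products over $\on{pt}/M$ and reduce to the group-theoretic identity $PwB \cap P^-wB = MwB$ together with $M \cap wBw^{-1} = \underline{B}$. Two comments, one minor and one more substantive. The minor one: the asserted identification $\on{pt}/M \iso \on{pt}/P \underset{\on{pt}/G}{\times} \on{pt}/P^-$ is false. That fiber product is the double quotient $P\backslash G/P^-$, which is stratified by the locally closed substacks $P\backslash PvP^-/P^-$, and $\on{pt}/M = P\backslash PP^-/P^-$ is only its \emph{open} stratum. The correct rewriting of the target is $\Bun_M^{(S)} \underset{\on{pt}/M}{\times} \bigl(\on{pt}/M \underset{P\backslash G/P^-}{\times} Y\bigr)$. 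This happens to be harmless: since $\Bun_M^{(S)} \to P\backslash G/P^-$ factors through the open stratum, the fiber you go on to compute over $\on{pt} \to \on{pt}/M$ — namely $PwB/B \cap P^-wB/B$ with its $M$-action — is the fiber of the correct object, and the pointwise statement you arrive at is the right one. But the intermediate claim as written is wrong and should be fixed.

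The substantive issue is the surjectivity step, $PwB/B \cap P^-wB/B \subseteq MwB/B$, which is the entire content of the proposition (the paper asserts it without proof). Your proposed route — expected-dimension count plus irreducibility of the intersection — leaves the irreducibility of this parabolic open Richardson variety unproved, and that is not easier than the identity itself; moreover the ingredients you defer to ("$P^- = U^-(P)\cdot M$ and uniqueness of the open-cell factorization") already yield a direct proof, making the dimension count superfluous. Concretely: minimality of $w$ in $W_Mw$ gives $w\Phi^+ \cap \Phi_M = \Phi_M^+$; setting $A^- := w\Phi^+ \cap \Phi_{U^-(P)}$ one gets $wBw^{-1} = U_{A^-}\cdot(wBw^{-1}\cap P)$ with $U_{A^-} \subset U^-(P) \cap wBw^{-1}$. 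Given $gB = umwB = u^-m^-wB$ with $u \in U(P)$, $u^- \in U^-(P)$, $m, m^- \in M$, the element $h := (m^-)^{-1}(u^-)^{-1}um$ fixes $wB$, so $h \in wBw^{-1}$; on the other hand $h = \bigl((m^-)^{-1}(u^-)^{-1}m^-\bigr)\cdot\bigl((m^-)^{-1}um\bigr) \in U^-(P)\cdot P$, and uniqueness of the open-cell factorization forces $(m^-)^{-1}(u^-)^{-1}m^- \in U_{A^-} \subset wBw^{-1}$. Hence $gB = u^-m^-wB = m^-\cdot\bigl((m^-)^{-1}u^-m^-\bigr)\cdot wB = m^-wB \in MwB/B$. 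I would replace the dimension-and-irreducibility argument with this.
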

\begin{proof}
Note that unwinding the left hand side yields
$$
  \Bun_M^{(S)} \underset{\on{pt}/M}{\times} \on{pt}/\underline{B}.
$$
For the right hand side, first let us note that 
$$
\begin{tikzcd}
	& {\Bun_M^{(S)} \underset{\on{pt}/G}{\times} \on{pt}/B} \\
	{\Bun_M^{(S)}} & {} & {\Bun_P^{\on{I}, (S)} \underset{\Bun_G^{\on{I}}}{\times} \Bun_{P^-}^{\on{I}, (S)}} \\
	& {\Bun_P^{(S)} \underset{\Bun_G}{\times} \Bun_{P^-}^{(S)}} & {} & {\on{pt}/B} \\
	&& {\on{pt}/G}
	\arrow[from=1-2, to=2-1]
	\arrow["\lrcorner"{anchor=center, pos=0.125, rotate=-45}, draw=none, from=1-2, to=2-2]
	\arrow[from=1-2, to=2-3]
	\arrow[from=2-1, to=3-2]
	\arrow[from=2-3, to=3-2]
	\arrow["\lrcorner"{anchor=center, pos=0.125, rotate=-45}, draw=none, from=2-3, to=3-3]
	\arrow[from=2-3, to=3-4]
	\arrow[from=3-2, to=4-3]
	\arrow[from=3-4, to=4-3]
\end{tikzcd}
$$
so
\[\begin{tikzcd}
	{\on{RHS}} && {\prescript{w}{}{\Bun_P^{\on{I}, (S)}} \underset{\Bun_G^{\on{I}}}{\times} \prescript{w}{}{\Bun_{P^-}^{\on{I}, (S)}}} && {P \backslash PwB/B \times P^- \backslash P^- wB/B} \\
	& {} && {} \\
	{\Bun_M^{(S)} \underset{\on{pt}/G}{\times} \on{pt}/B} && {\Bun_P^{\on{I}, (S)} \underset{\Bun_G^{\on{I}}}{\times} \Bun_{P^-}^{\on{I}, (S)}} && {P \backslash G/B \times P^- \backslash G/B} \\
	& {} \\
	{\Bun_M^{(S)}} && {\Bun_P^{(S)} \underset{\Bun_G}{\times} \Bun_{P^-}^{(S)}}
	\arrow[from=1-1, to=1-3]
	\arrow["\lrcorner"{anchor=center, pos=0.125}, draw=none, from=1-1, to=2-2]
	\arrow[from=1-1, to=3-1]
	\arrow[from=1-3, to=1-5]
	\arrow["\lrcorner"{anchor=center, pos=0.125, rotate=45}, draw=none, from=1-3, to=2-4]
	\arrow[from=1-3, to=3-3]
	\arrow[from=1-5, to=3-5]
	\arrow[from=3-1, to=3-3]
	\arrow["\lrcorner"{anchor=center, pos=0.125}, draw=none, from=3-1, to=4-2]
	\arrow[from=3-1, to=5-1]
	\arrow[from=3-3, to=3-5]
	\arrow[from=3-3, to=5-3]
	\arrow[from=5-1, to=5-3]
\end{tikzcd}\]
and thus unwinding the right hand side yields
$$
  (\Bun_M^{(S)} \underset{\on{pt}/G}{\times} \on{pt}/B) \underset{P \backslash G/B \times P^- \backslash G/B}{\times} P \backslash PwB/B \times P^- \backslash P^- wB/B.
$$
The map 
$$
   \Bun_M^{(S)} \underset{\on{pt}/M}{\times} \on{pt}/\underline{B}  \to (\Bun_M^{(S)} \underset{\on{pt}/G}{\times} \on{pt}/B) \underset{P \backslash G/B \times P^- \backslash G/B}{\times} P \backslash PwB/B \times P^- \backslash P^- wB/B
$$
is obtained by base-changing the isomorphism
\begin{align}
\on{pt}/\underline{B} &\to M \backslash G/B \underset{P \backslash G/B \times P^- \backslash G/B}{\times} (P \backslash PwB/B \times P^- \backslash P^- wB/B) \\
&= M\backslash (PwB \cap P^- wB)/B \\
&= M\backslash MwB/B
\end{align}
along $\Bun_M^{(S)} \to \on{pt}/M$.
\end{proof}

\ssec{The Map $\Bun_M^{\on{I_M}, (S)} \to \prescript{w}{}{\Bun_P^{\on{I}, (S)}}$ is Surjective} \label{surjective}

\sssec{}
In this subsection, we will prove Part (b) of Proposition \ref{third reduction}, i.e. that for an Iwahori-contractively $P$-admissible subset $S$,
$$
  \Bun_M^{\on{I}_M, (S)} \to \prescript{w}{}{\Bun_P^{\on{I}, (S)}}
$$
is surjective.

\sssec{}
The following diagram will make the argument easier to follow.
$$
\begin{tikzcd}
	{\Bun_M^{\on{I}_M, (S)}} \\
	& {} && {\prescript{w}{}{\Bun_P^{\on{I},(S)}} \underset{\Bun_G^{\on{I}}}{\times} \prescript{w}{}{\Bun_{P^-}^{\on{I},(S)}}} &&& {\prescript{w}{}{\Bun_{P^-}^{\on{I},(S)}}} \\
	& {\prescript{w}{}{\Bun_P^{\on{I},(S)}}} &&& {\Bun_G^{\on{I}}} \\
	{\Bun_M^{(S)}} \\
	&&& {\Bun_P^{(S)} \underset{\Bun_G}{\times} \Bun_{P^-}^{(S)}} &&& {\Bun_{P^-}^{(S)}} \\
	& {\Bun_P^{(S)}} &&& {\Bun_G}
	\arrow[from=3-2, to=6-2]
	\arrow[from=2-4, to=5-4]
	\arrow[from=3-5, to=6-5]
	\arrow[from=2-7, to=5-7]
	\arrow[from=6-2, to=6-5]
	\arrow[from=5-4, to=5-7]
	\arrow[from=5-7, to=6-5]
	\arrow[from=5-4, to=6-2]
	\arrow[from=2-4, to=3-2]
	\arrow[from=2-7, to=3-5]
	\arrow[from=2-4, to=2-7]
	\arrow[from=3-2, to=3-5]
	\arrow[from=4-1, to=5-4]
	\arrow[from=1-1, to=4-1]
	\arrow[from=1-1, to=2-4]
	\arrow["\lrcorner"{anchor=center, pos=0.125}, draw=none, from=1-1, to=2-2]
\end{tikzcd}
$$

\begin{proof}
    Let
    $$
    y \in \prescript{w}{}{\Bun_P^{\on{I}, (S)}}(k)
    $$
    be a $k$-point and let 
    $$
    \CF_{P} \in \Bun_P^{(S)}(k)
    $$
    be its underlying $P$-bundle. 

    By Part (b) of Proposition \ref{contractiveness of Bun_M}, which is the assertion that
    $$
    \Bun_M^{(S)} \to \Bun_P^{(S)}
    $$
    is surjective, there exists
    $\CF_M \in \Bun_M^{(S)}(k)$
    such that $\CF_{P} = \CF_M \times^M P$.
    Now consider the image of $\CF_M$ under the map
    $$
    \Bun_M^{(S)} \to \Bun_P^{(S)} \underset{\Bun_G}{\times} \Bun_{P^-}^{(S)}.
    $$
    Together with $y$, this gives a $k$-point of the fiber product
    $$
      \prescript{w}{}{\Bun_P^{\on{I}, (S)}} \underset{\Bun_P^{(S)}}{\times} (\Bun_P^{(S)} \underset{\Bun_G}{\times} \Bun_{P^-}^{(S)}) = \prescript{w}{}{\Bun_P^{\on{I}, (S)}} \underset{\Bun_G^{\on{I}}}{\times} \Bun_{P^-}^{\on{I},(S)} \supset \prescript{w}{}{\Bun_P^{\on{I},(S)}} \underset{\Bun_G^{\on{I}}}{\times} \prescript{w}{}{\Bun_{P^-}^{\on{I},(S)}}
    $$

    Now concretely we can think of this $k$-point as the data
    $$
    (\CF_P \in \Bun_P^{(S)}(k), \CF_{P^-} \in \Bun_{P^-}^{(S)}(k), \CF_{B, x} \in \on{pt}/B (k)) 
    $$
    together with isomorphisms
    $$
    (\gamma : \CF_{P} \times^{P} G \iso \CF_{P^-} \times^{P^-} G; \ g: \CF_{B, x} \times^B G \iso x^*\CF_{P} \times^P G)
    $$
    such that $g \in (PwB)_{x^* \CF_{M}} \subset G_{x^* \CF_M}$. Let $u \in U(P)_{x^*\CF_M}$ be such that $ug \in (MwB)_{x^* \CF_M}$ Without loss of generality, we may identify $x^* \gamma \in G_{x^* \CF_M}$ with the identity element. Now we observe that $x^* \gamma \circ ug \in (MwB)_{x^* \CF_M} \subset (P^-wB)_{x^* \CF_M}$.
    
    Corollary \ref{global generation surj}, together with the commutative diagram
    $$
    \begin{tikzcd}
	{\on{H}^0(X, U(P)_{\CF_M})} && {U(P)_{x^*\F_{M}}} \\
	\\
	{\on{H}^0(X, G_{\CF_G})} && G_{x^*\F_{G}}
	\arrow[two heads, from=1-1, to=1-3]
	\arrow[from=1-3, to=3-3]
	\arrow[from=1-1, to=3-1]
	\arrow[from=3-1, to=3-3]
\end{tikzcd}
    $$
    implies that $u$ can be lifted to an automorphism $\alpha \in \on{Aut}(\CF_G) = \on{H}^0(G_{\CF_G})$. Now consider the $k$-point given by the same principal bundles as before, but with the isomorphisms 
    $$
    (\gamma \circ \alpha, g)
    $$
    instead. This gives a $k$-point that lies in
    $$
    \prescript{w}{}{\Bun_P^{\on{I},(S)}} \underset{\Bun_G^{\on{I}}}{\times} \prescript{w}{}{\Bun_{P^-}^{\on{I},(S)}}.
    $$
    Together with the data of $\CF_M$, by Proposition \ref{fiber product} this gives the data of a $k$-point of $\Bun_M^{\on{I}, (S)}$ that is sent to $y$ under
    $$
    \Bun_M^{\on{I}, (S)} \to \prescript{w}{}{\Bun_P^{\on{I}, (S)}},
    $$
    which is the map that we wanted to show was surjective.
\end{proof}

\ssec{The Map $\prescript{w}{}{\Bun_{P^{-}}^{\on{I}, (S)}} \to \Bun_G^{\on{I}}$ is Smooth} \label{smooth}

\sssec{}
In this subsection, we will prove Part (c) of Proposition \ref{third reduction}, i.e. that for an Iwahori-contractively $P$-admissible subset $S$,
$$
\prescript{w}{}{\Bun_{P^{-}}^{\on{I}, (S)}} \to \Bun_G^{\on{I}}
$$
is smooth.

\sssec{}
Since our map is locally of finite presentation between smooth stacks, it suffices to show that for a point 
$$
z \in \prescript{w}{}{\Bun_{P^{-}}^{\on{I}, (S)}}
$$
we have that
$$
H^1(\on{fib}(T_z \prescript{w}{}{\Bun_{P^{-}}^{\on{I}, (S)}} \to T_{f(z)} \Bun_G^{\on{I}})) = 0
$$
but since $S$ is $P$-admissible, $\prescript{w}{}{\Bun_{P^{-}}^{\on{I}, (S)}} \subset \prescript{w}{}{\Bun_{P^{-}}^{\on{I}}}$ is open, so it suffices to show
$$
H^1(\on{fib}(T_z \prescript{w}{}{\Bun_{P^{-}}^{\on{I}}} \to T_{f(z)} \Bun_G^{\on{I}})) = 0.
$$
Here and in the sequel, let us denote
$$
  \on{fib} = \on{fib}(T_z \prescript{w}{}{\Bun_{P^{-}}^{\on{I}}} \to T_{f(z)} \Bun_G^{\on{I}}).
$$

\sssec{}
Note that
$$
\prescript{w}{}{\Bun_{P^-}^{\on{I}}} = \Bun_{P^-} \underset{\on{pt}/P^-}{\times} \on{pt}/(B^w \cap P^-)
$$
thus
\begin{align}
T_z \prescript{w}{}{\Bun_{P^-}^{\on{I}}} &= T_{\F_{P^-}} \Bun_{P^-} \underset{T_{x^*\F_{P^-}}\on{pt}/P^-}{\times} T_{\F_{B^w \cap P^-, x}} \on{pt}/(B^w \cap P^-) \\ &= \Gamma(X, \fp^-[1]_{\F_{P^-}}) \underset{\fp^-[1]_{x^*\F_{P^-}}}{\times} (\fb^w \cap \fp^-)[1]_{\F_{B^w \cap P^-, x}} 
\end{align}
and similarly since
$$
\Bun_G^{\on{I}} = \Bun_G \underset{\on{pt}/G}{\times} \on{pt}/B
$$
we have that 
$$
T_{f(z)}\Bun_G^{\on{I}} = T_{\F_G}\Bun_G \underset{T_{x^*\F_G}\on{pt}/G}{\times} T_{\F_{B,x}} \on{pt}/B
= \Gamma(X, \fg[1]_{\F_G}) \underset{\fg[1]_{x^*\F_G}}{\times} \fb[1]_{\F_{B, x}}
$$
and therefore
$$
\on{fib} = \Gamma(X, (\fg/\fp^-)_{\F_{P^-}}) \underset{(\fg/\fp^-)_{x^*\F_{P^-}}}{\times} (\fb/(\fb^w \cap \fp^-))_{\F_{B^w \cap P^- , x}}
$$
which yields a long exact sequence
$$
\on{H}^0(X, (\fg/\fp^-)_{\F_{P^-}}) \oplus (\fb/(\fb^w \cap \fp^-))_{\F_{B^w \cap P^- , x}} \to (\fg/\fp^-)_{x^*\F_{P^-}} \to H^1(\on{fib}) \to \on{H}^1(X, (\fg/\fp^-)_{\F_{P^-}}).
$$
Now the result follows from the observation that the last term is zero and the first arrow is surjective by Corollary \ref{global generation smooth}.

\ssec{The Map $\Bun_M^{\on{I_M}, (S)} \to \prescript{w}{}{\Bun_{P^{-}}^{\on{I}, (S)}}$ is Contractive} \label{contractive}

\sssec{}
In this subsection, we will prove Part (d) of Proposition \ref{third reduction}, i.e. that 
$$
\Bun_{M}^{\on{I}_M, (S)} \to \prescript{w}{}{\Bun_{P^-}^{\on{I}, (S)}}
$$
is contractive.

\sssec{}
Our strategy is to make use of Proposition \ref{contraction}. The first thing we need to do is to write down a map $\pi : \prescript{w}{}{\Bun_{P^-}^{\on{I}, (S)}} \to \Bun_{M}^{\on{I}_M, (S)}$ that $i$ is a section of.

\sssec{}
Note that we have a diagram

\[\begin{tikzcd}
	&& {\prescript{w}{}{\Bun_{P^-}^{\on{I}, (S)}}} \\
	& {\Bun_{P^-}^{\on{I}, (S)}} & {} & {P^-\backslash P^-wB/B} \\
	{\Bun_{P^-}^{(S)}} & {} & {P^- \backslash G/B} \\
	& {\on{pt}/P^-} & {} & {\on{pt}/B} \\
	&& {\on{pt}/G}
	\arrow[from=2-2, to=3-1]
	\arrow[from=3-1, to=4-2]
	\arrow[from=4-2, to=5-3]
	\arrow[from=3-3, to=4-2]
	\arrow[from=2-2, to=3-3]
	\arrow[from=2-4, to=3-3]
	\arrow[from=4-4, to=5-3]
	\arrow[from=3-3, to=4-4]
	\arrow[from=1-3, to=2-2]
	\arrow[from=1-3, to=2-4]
	\arrow["\lrcorner"{anchor=center, pos=0.125, rotate=-45}, draw=none, from=1-3, to=2-3]
	\arrow["\lrcorner"{anchor=center, pos=0.125, rotate=-45}, draw=none, from=2-2, to=3-2]
	\arrow["\lrcorner"{anchor=center, pos=0.125, rotate=-45}, draw=none, from=3-3, to=4-3]
\end{tikzcd}\]

and $P^- \backslash P^- w B/B$ can be identified with $\on{pt}/(B^w \cap P^-)$, so we have an identification
$$
\prescript{w}{}{\Bun_{P^-}^{\on{I}, (S)}} = \Bun_{P^-}^{(S)} \underset{\on{pt}/P^-}{\times} \on{pt}/(B^w \cap P^-).
$$

\sssec{}
Under this identification, the map $i : \Bun_{M}^{\on{I}_M} \to \prescript{w}{}{\Bun_{P^-}^{\on{I}, (S)}}$ is the one induced by the diagram

\[\begin{tikzcd}
	& {\Bun_M^{\on{I}_M, (S)}} \\
	{\Bun_M^{(S)}} & {} & {\on{pt}/\underline{B}} \\
	& {\on{pt}/M} \\
	{\Bun_{P^-}^{(S)}} && {\on{pt}/(B^w \cap P^-)} \\
	& {\on{pt}/P^-}
	\arrow[from=2-1, to=3-2]
	\arrow[from=2-3, to=3-2]
	\arrow[from=1-2, to=2-1]
	\arrow[from=1-2, to=2-3]
	\arrow["\lrcorner"{anchor=center, pos=0.125, rotate=-45}, draw=none, from=1-2, to=2-2]
	\arrow[from=2-3, to=4-3]
	\arrow[from=4-3, to=5-2]
	\arrow[from=3-2, to=5-2]
	\arrow[from=2-1, to=4-1]
	\arrow[from=4-1, to=5-2]
\end{tikzcd}\]

and now it is clear that $i$ is a section of the map $\pi : \prescript{w}{}{\Bun_{P^-}^{\on{I}, (S)}} \to \Bun_{M}^{\on{I}_M, (S)}$ induced by the diagram

\[\begin{tikzcd}
	& {\prescript{w}{}{\Bun_{P^-}^{\on{I}, (S)}}} \\
	{\Bun_{P^-}^{(S)}} & {} & {\on{pt}/(B^w \cap P^-)} \\
	& {\on{pt}/P^-} \\
	{\Bun_M^{(S)}} && {\on{pt}/\underline{B}} \\
	& {\on{pt}/M}
	\arrow[from=2-3, to=3-2]
	\arrow[from=2-1, to=3-2]
	\arrow[from=4-1, to=5-2]
	\arrow[from=4-3, to=5-2]
	\arrow[from=3-2, to=5-2]
	\arrow[from=2-1, to=4-1]
	\arrow[from=2-3, to=4-3]
	\arrow[from=1-2, to=2-1]
	\arrow[from=1-2, to=2-3]
	\arrow["\lrcorner"{anchor=center, pos=0.125, rotate=-45}, draw=none, from=1-2, to=2-2]
\end{tikzcd}\]

\sssec{}
Now we will give an $\mathbb{A}^1$-action on $\prescript{w}{}{\Bun_{P^-}^{\on{I}, (S)}}$ over $\Bun_{M}^{\on{I}_M, (S)}$ such that $0$ acts as $i \circ \pi$ and the induced $\mathbb{G}_m$-action is trivial over a point.

\sssec{}
As in \cite[Subsection 11.2]{DG2}, let $\mu : \mathbb{G}_m \to Z(M) \subset T$ satisfy $\langle \mu, \check\alpha_i \rangle > 0$ for $i \notin \Gamma_M$. Then the $\mathbb{G}_m$-action on $P^-$ over $M$ defined by
$$
\rho_t(p) = \mu(t)^{-1} p \mu(t)
$$
extends to an $\mathbb{A}^1$-action on $P^-$ over $M$ such that $\rho_0 \in End(P^-)$ corresponds to the composition $P^- \to M \to P^-$. Furthermore, this action sends $B^w \cap P^- \subset P^-$ to itself, so we obtain an $\mathbb{A}^1$-action on $B^w \cap P^-$ over $\underline{B}^w \iso \underline{B}$.

This gives rise to an $\mathbb{A}^1$-action on $\on{pt}/P^-$ over $\on{pt}/M$, as well as a compatible $\mathbb{A}^1$-action on $\on{pt}/(B^w \cap P^-)$ over $\on{pt}/\underline{B}$. Applying $\on{Maps}(X, -)$ to the $\mathbb{A}^1$-action on $\on{pt}/P^-$ over $\on{pt}/M$ yields a (tautologically compatible) $\mathbb{A}^1$-action on $\Bun_{P^-}^{(S)}$ over $\Bun_M^{(S)}$.

Now the map $\prescript{w}{}{\Bun_{P^-}^{\on{I}, (S)}} \to \Bun_M^{\on{I}_M, (S)}$ is of the form
$$
\Bun_{P^-}^{(S)} \underset{\on{pt}/P^-}{\times} \on{pt}/(B^w \cap P^-) \to \Bun_M^{(S)} \underset{\on{pt}/M}{\times} \on{pt}/\underline{B}
$$
and the fiber product of the $\mathbb{A}^1$-actions satisfies the hypotheses of Proposition \ref{contraction}, as desired.

\ssec{Iwahori Level Structure at Multiple Points} \label{multiple points}

\sssec{}
Throughout the paper, we have considered $\Bun_G^{\on{I}}$, the moduli stack of principal $G$-bundles on a curve $X$ with Iwahori level structure at a single point $x \in X$. However, we could have also considered Iwahori level structure at $k$ distinct points $x_1, ..., x_k \in X$. In this subsection, we will discuss what changes need to be made to the proof in order to show that the category of D-modules on the analogous moduli stack is compactly generated.

\begin{defn}
    Let $x_1, ..., x_k \in X$ be $k$ distinct points. Then we define
    $$
    \Bun_G^{(\on{I}; x_1, ..., x_k)} = \Bun_G \underset{(\on{pt}/G)^k}{\times} (\on{pt}/B)^k
    $$
    where the map is
    $$
    (x_1^*, ..., x_k^*) : \Bun_G \to (\on{pt}/G)^k.
    $$
\end{defn}

\begin{prop}
    $\Dmod(\Bun_G^{(\on{I}; x_1, ..., x_k)})$ is compactly generated.
\end{prop}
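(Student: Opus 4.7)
The plan is to mimic the single-point argument verbatim, modifying only the numerical bound in the definition of an Iwahori-contractively $P$-admissible subset. Set $D = x_1 + \cdots + x_k$, and redeclare a $P$-admissible subset $S \subset \Lambda_G^{+, \BQ}$ to be \emph{Iwahori-contractively $P$-admissible} if additionally, for every $\lambda \in S$,
$$
  \langle \lambda, \check\alpha_i \rangle > 2g - 2 + k \quad \text{whenever } i \notin \Gamma_M.
$$
For $k=1$ this recovers the bound $> 2g-1$ used in Subsection \ref{reduction steps}. With this modification the reductions go through unchanged: $\Bun_G^{(\on{I}; x_1, \ldots, x_k)}$ is covered by the quasicompact open substacks $\Bun_G^{(\on{I}; x_1, \ldots, x_k), (\leq \theta)}$ obtained by base change from $\Bun_G^{(\leq \theta)}$, and each complement is a finite union of pieces $\Bun_G^{(\on{I}; x_1, \ldots, x_k), (S)}$ for Iwahori-contractively $P$-admissible $S$.

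Next I would further stratify by relative position at each point separately. For a tuple $w = (w_1, \ldots, w_k) \in (W_M \backslash W)^k$ and such an $S$, define
$$
\prescript{w}{}{\Bun_P^{(\on{I}; x_1, \ldots, x_k), (S)}} := \Bun_P^{(S)} \underset{(\on{pt}/P)^k}{\times} \prod_{j=1}^k P \backslash P w_j B/B,
$$
and analogously for $P^-$. Following Subsection \ref{outline}, one then has a commutative square with vertices $\Bun_M^{(\on{I}_M; x_1, \ldots, x_k), (S)}$, $\prescript{w}{}{\Bun_P^{(\on{I}; x_1, \ldots, x_k), (S)}}$, $\prescript{w}{}{\Bun_{P^-}^{(\on{I}; x_1, \ldots, x_k), (S)}}$, and $\Bun_G^{(\on{I}; x_1, \ldots, x_k)}$, and it suffices to verify the multi-point analogue of Proposition \ref{third reduction}: open-embedding, surjectivity, smoothness, and contractiveness.

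Parts (a) and (d) transfer essentially verbatim. The open-embedding claim unwinds via the analogue of Proposition \ref{fiber product} to a product of $k$ copies of $M\backslash(Pw_jB \cap P^-w_jB)/B = M \backslash Mw_jB/B$, and the $\BA^1$-action providing contractiveness is built on $\Bun_{P^-}^{(S)} \to \Bun_M^{(S)}$ via the cocharacter of $Z(M)$ and is automatically compatible with the level structure at each point since the action on $P^-$ sends each $B^{w_j} \cap P^-$ to itself. The substantive change appears in (b) and (c), both of which reduce to a ``global generation relative to $D$'' strengthening of Proposition \ref{global generation}: one needs $\on{H}^1(X, \fn(P)_{\F_M}(-D)) = 0$ together with surjectivity of $\on{H}^0(X, \fn(P)_{\F_M}) \to \bigoplus_{j=1}^k \fn(P)_{x_j^* \F_M}$, and analogously $\on{H}^1(X, (\fg/\fp^-)_{\F_{P^-}}(-D)) = 0$. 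By Serre duality exactly as in Lemma \ref{numerology}, line subbundles of $(V^*)_{\F_{\tilde G}}(D) \otimes \omega_X$ have degree at most $-\langle \lambda, \check\mu \rangle + k + 2g - 2$, which is negative precisely when $\langle \lambda, \check\mu \rangle > 2g - 2 + k$---the new bound.

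The main obstacle is really only the bookkeeping in the multi-point version of (b): given $y \in \prescript{w}{}{\Bun_P^{(\on{I}; x_1, \ldots, x_k), (S)}}$ with underlying $\F_M$ and specified $u_j \in U(P)_{x_j^* \F_M}$ for each $j$, one must lift the entire tuple $(u_1, \ldots, u_k)$ to a single global automorphism $\alpha \in \on{Aut}(\F_G) = \on{H}^0(X, G_{\F_G})$ whose restriction at each $x_j$ recovers $u_j$. Via the characteristic-zero $M$-equivariant exponential $\fn(P) \to U(P)$, this reduces to the multi-point surjectivity of $\on{H}^0(X, \fn(P)_{\F_M}) \to \bigoplus_j \fn(P)_{x_j^* \F_M}$ established above. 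With this in hand, the diagrams of Subsections \ref{open embedding} through \ref{contractive} carry over unchanged, Proposition \ref{contraction} provides contractiveness of each $\prescript{w}{}{\Bun_G^{(\on{I}; x_1, \ldots, x_k),(S)}}$ in $\Bun_G^{(\on{I}; x_1, \ldots, x_k)}$, and Proposition \ref{truncatable} concludes.
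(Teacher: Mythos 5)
Your proposal is correct and follows exactly the route the paper itself takes in Subsection \ref{multiple points}: replace the bound by $\langle\lambda,\check\alpha_i\rangle > 2g-2+k$, stratify by tuples $(w_1,\ldots,w_k)$ of relative positions, and reduce everything to $k$-point global generation of $\fn(P)_{\F_M}$, i.e.\ the vanishing $\on{H}^1(X,\fn(P)_{\F_M}(-D))=0$ for $D=x_1+\cdots+x_k$ via the same Serre-duality count. Your write-up is in fact somewhat more detailed than the paper's own sketch, and the details you supply (the multi-point versions of parts (a)--(d) and the lifting of the tuple $(u_1,\ldots,u_k)$ to a single global automorphism) are the right ones.
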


\sssec{}
It turns out that conceptually, the proof of Proposition $\ref{multiple points}$ is identical to the proof of the single point case; the only difference is in the numerology. Just as the correct bound when going from the Iwahori ramification at no points case (i.e. \cite{DG2}) to the single point case increased from $2g - 2$ to $2g - 1$, the correct bound in the $k$ points case will be $2g - 2 + k$.

\sssec{}
In other words, the proof consists of showing that the quasicompact open substacks
$$
\Bun_G^{(\on{I}; x_1, ..., x_k), (\leq \theta)}
$$
are co-truncative for $\langle \theta, \check{\alpha}_i \rangle \geq 2g - 2 + k$. This is done by writing
$$
\Bun_G^{(\on{I}; x_1, ..., x_k), (\leq \theta')} \setminus \Bun_G^{(\on{I}; x_1, ..., x_k), (\leq \theta)}
$$
as a finite union of $\Bun_G^{(\on{I}; x_1, ..., x_k), (S)}$ for $k$-Iwahori contractively $P$-admissible subsets $S$ (where $P$ is allowed to vary), then stratifying further by
$$
\prescript{(w_1, ..., w_k)}{}{\Bun_G^{(\on{I}; x_1, ..., x_k), (S)}} = \Bun_G^{(\on{I}; x_1, ..., x_k), (S)} \underset{(P\backslash G/B)^k}{\times} (P\backslash Pw_1 B/B \times ... \times P\backslash Pw_k B/B).
$$

\sssec{}
The reason for this boils down to the fact that instead of global generation of $\fn(P)_{\F_M}$, we want to have $k$-global generation of $\fn(P)_{\F_M}$; i.e. we want 
$$
\on{H}^0(X, \fn(P)_{\F_M}) \to \bigoplus_{j=1}^k (\fn(P)_{\F_M})_{x_j}
$$
to be surjective for any $k$ points of $X$.

\end{document}